\newtheorem{theorem}{Theorem}[section]
\newtheorem{lemma}[theorem]{Lemma}
\newtheorem{cor}[theorem]{Corollary}
\newtheorem{Cor}[theorem]{Corollary}
\theoremstyle{definition}
\newtheorem{definition}[theorem]{Definition}
\newtheorem{example}[theorem]{Example}
\theoremstyle{remark}
\numberwithin{equation}{section}
\def\co{\colon\thinspace}
\newcommand{\nser}{\mathcal{N}} 
\newcommand{\cser}{\mathcal{C}}
\DeclareMathOperator{\ad}{ad}
\DeclareMathOperator{\Ad}{Ad}
\DeclareMathOperator{\Biad}{Biad}
\DeclareMathOperator{\Bider}{Bider}
\DeclareMathOperator{\Cloder}{Cloder}
\DeclareMathOperator{\codim}{codim}
\DeclareMathOperator{\soc}{Soc}
\DeclareMathOperator{\core}{Core}
\DeclareMathOperator{\ch}{char}
\DeclareMathOperator{\Hom}{Hom}
\DeclareMathOperator{\alg}{alg\langle}
\DeclareMathOperator{\Leib}{Leib}
\DeclareMathOperator{\sym}{{}^\text{sym}\mspace{-4mu}}
\DeclareMathOperator{\asym}{{}^\text{asym}\mspace{-4mu}}
\newcommand{\sn}{\mbox{$\triangleleft\mspace{-1.8mu}\triangleleft\medspace$}}
\newcommand{\snr}{\mbox{$\triangleleft\mspace{-1.8mu}\triangleleft_r\medspace$}}
\newcommand{\f}{\mathfrak}
\newcommand{\fH}{{\mathfrak H}}
\newcommand{\fF}{{\mathfrak F}}
\newcommand{\fK}{{\mathfrak K}}
\newcommand{\fN}{{\mathfrak N}}
\newcommand{\fU}{{\mathfrak U}}
\newcommand{\fCs}{{\mathfrak{Cs}}}
\DeclareMathOperator{\id}{\lhd}
\DeclareMathOperator{\idr}{\lhd_r}
\DeclareMathOperator{\ideq}{\unlhd}
\newcommand{\Lie}{{}^{\text{Lie}} \mspace{-1.5mu}{}}
\DeclareMathOperator{\ev}{Ev}
\DeclareMathOperator{\loc}{Loc}
\DeclareMathOperator{\quot}{\textsc{q}\mspace{-2mu}}
\DeclareMathOperator{\sdir}{\textsc{r}\mspace{-2mu}}
\DeclareMathOperator{\frat}{{\textsc{e}}_\Phi\mspace{-3mu}}
\DeclareMathOperator{\fratn}{{\textsc{e}}_\Phi^{\id}\mspace{-3mu}}
\DeclareMathOperator{\fratsn}{{\textsc{e}}_\Phi^{\sn}\mspace{-3mu}}
\DeclareMathOperator{\fratsnr}{{\textsc{e}}_\Phi^{\snr}\mspace{-3mu}}
\DeclareMathOperator{\prim}{\textsc{p}\mspace{-2mu}}
\DeclareMathOperator{\Proj}{Proj}
\DeclareMathOperator{\Cov}{Cov}
\begin{document}
\title{Schunck classes of soluble Leibniz algebras}
\author{Donald W. Barnes}
\address{1 Little Wonga Rd, Cremorne NSW 2090 Australia}
\email{donwb@iprimus.com.au}
\thanks{This work was done while the author was an Honorary Associate of the
School of Mathematics and Statistics, University of Sydney.}
\subjclass[2000]{Primary 17A32}
\keywords{Leibniz algebras, soluble, formations, projectors}

\begin{abstract}  I set out the theory of Schunck classes and projectors for soluble Leibniz algebras, parallel to that for Lie algebras.  Primitive Leibniz algebras come in pairs, one (Lie) symmetric, the other antisymmetric.  A Schunck formation containing one member of a pair also contains the other.  If $\fH$ is a Schunck formation and $H$ is an $\fH$-projector of the Leibniz algebra $L$, then $H$ is intravariant in $L$.  An example is given to show that the assumption that the Schunck class $\f{H}$ is a formation cannot be omitted.
\end{abstract}

\maketitle
\section{Introduction} \label{sec-intro}
The theory of Schunck classes, formations and projectors was originally developed for finite soluble groups.  This theory is set out in Doerk and Hawkes \cite{DH}.  It was adapted to Lie algebras in Barnes and Gastineau-Hills \cite{BGH} (but set out in older terminology), and to restricted Lie algebras in Barnes \cite{Res}.  Most of the theory for Leibniz algebras is a straightforward translation of the theory for Lie algebras.  Where proofs are the same as for Lie algebras, I omit the proofs, indicating this by placing the end of proof symbol \qedsymbol\  at the end of the statement of the result.

 A (left) Leibniz algebra is an algebra $L$ over a field $F$ for which all the left multiplications are derivations, that is,
$$a(bc) = (ab)c + b(ac)$$
for all $a,b,c \in L$.   The basic properties of Leibniz algebras and their modules may be found in Ayupov and Omirov \cite{AyO} or Patsourakos \cite{Pats}.  Some further definitions and notations used in this paper are in Barnes \cite{LeibEngel}, along with some lemmas fundamental to the study of Schunck classes.

That the left ideal generated by the squares of elements of a Leibniz algebra $L$ is a two-sided ideal whose quotient is a Lie algebra is well known.  I propose to call this ideal the Leibniz kernel of $L$ and to denote it by $\Leib(L)$.
The following stronger result appears not to have been noticed.

\begin{lemma}   Let $L$ be a Leibniz algebra.   The subspace $\langle x^2 \mid x \in L\rangle$ is a 2-sided ideal of $L$. 
\end{lemma}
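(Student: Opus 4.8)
The plan is to show directly that $S = \langle x^2 \mid x \in L\rangle$ is closed under both left and right multiplication by arbitrary elements of $L$, which is exactly the assertion that $S$ is a two-sided ideal. Two elementary identities drive the whole argument. The first is a polarization remark: for any $u,v \in L$,
$$uv + vu = (u+v)^2 - u^2 - v^2,$$
so every symmetrized product $uv + vu$ already lies in $S$. The second is the consequence of the defining identity $a(bc) = (ab)c + b(ac)$ obtained by setting $a = b = x$ and $c = y$, namely $x(xy) = (xx)y + x(xy)$, which collapses to
$$x^2 y = 0.$$

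This second identity immediately settles right multiplication: each generator $x^2$ is annihilated on the right by every $y \in L$, so by linearity $S \cdot L = 0$, and in particular $S$ is a right ideal. This is the source of the ``stronger'' phenomenon alluded to before the statement: squares are not merely absorbed on the right, they are killed outright.

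For left multiplication I would apply the Leibniz identity in the form $a(xx) = (ax)x + x(ax)$. Writing $u = ax$, the right-hand side is precisely the symmetrized product $ux + xu$, which lies in $S$ by the polarization identity of the first step. Hence $a \cdot x^2 \in S$ for every generator $x^2$, and by linearity $L \cdot S \subseteq S$, so $S$ is a left ideal. Combining the two inclusions shows $S$ is a two-sided ideal.

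I do not anticipate a serious obstacle: the argument is two applications of the defining identity together with the polarization trick. The one point to get right is the bookkeeping in the left-multiplication step, namely recognizing that $(ax)x + x(ax)$ is a symmetrized product $ux + xu$ in the \emph{single} variable $u = ax$, rather than a product of two unrelated elements; it is exactly this observation that lets the polarization step close the argument. It is worth emphasizing, finally, that no passage to ``the left ideal generated by the squares'' is needed anywhere: the span $S$ is already closed under left multiplication on its own.
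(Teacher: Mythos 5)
Your proof is correct. The right-ideal half is identical to the paper's: both derive $(xx)y=0$ from the Leibniz identity with $a=b=x$, so the span $S$ of squares satisfies $SL=0$. The left-ideal half, which is the real content of the lemma, is handled by a different computation. The paper expands $(x+y^2)^2 = x^2 + xy^2 + y^2x + (y^2)^2$, uses the first identity to kill the last two terms, and reads off $xy^2 = (x+y^2)^2 - x^2 \in S$; note that this route does lean on the right-annihilation property. You instead apply the derivation property of left multiplication to the product $xx$, getting $a(x^2) = (ax)x + x(ax) = ux + xu$ with $u = ax$, and then polarize: $ux+xu = (u+x)^2 - u^2 - x^2 \in S$. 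Both are one-line arguments; yours has the mild structural advantage that the left-ideal step is independent of the right-annihilation identity, while the paper's is slightly more economical in that it needs no auxiliary polarization identity (the cross terms of its chosen square already vanish). Either way the conclusion and the level of generality are the same, and your bookkeeping at the one delicate point (treating $(ax)x + x(ax)$ as a symmetrized product in the single element $u=ax$) is handled correctly.
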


\begin{proof} We have $x(xy) = (xx)y + x(xy)$, so $(xx)y = 0$.  Also
$$(x+y^2)^2 = x^2 + xy^2 + y^2x+ y^2 y^2 = x^2 + x (y^2).$$
Thus $xy^2 = (x+y^2)^2 - x^2 \in \langle a^2 \mid a \in L \rangle$
\end{proof}

The powers $L^n$ of a Leibniz algebra $L$ are defined inductively by $L^1 = L$ and $L^{n+1} = LL^n$.  I quote Ayupov and Omirov \cite[Lemma 1]{AyO}.

\begin{lemma} Let $L$ be a Leibniz algebra.  Then $L^rL^s \subseteq L^{r+s}$.
\end{lemma}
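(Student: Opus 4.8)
The plan is to prove this by induction on $r$, keeping $s$ arbitrary throughout so that the inductive hypothesis is available for all values of the second index simultaneously. The base case $r = 1$ is immediate from the definition of the powers: $L^1 L^s = L L^s = L^{s+1} = L^{1+s}$.

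For the inductive step I would assume that $L^{r-1} L^t \subseteq L^{(r-1)+t}$ holds for every $t \geq 1$ and deduce the claim for $r$. The key preliminary observation is that, by the recursion $L^r = L L^{r-1}$, the space $L^r$ is spanned by products $ab$ with $a \in L$ and $b \in L^{r-1}$; consequently $L^r L^s$ is spanned by the triple products $(ab)c$ with $a \in L$, $b \in L^{r-1}$ and $c \in L^s$. I would then invoke the defining Leibniz identity in the rearranged form
$$(ab)c = a(bc) - b(ac)$$
and handle the two terms on the right separately. For the first term, $bc \in L^{r-1} L^s \subseteq L^{(r-1)+s}$ by the inductive hypothesis, whence $a(bc) \in L \cdot L^{(r-1)+s} = L^{r+s}$. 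For the second term, $ac \in L L^s = L^{s+1}$, so $b(ac) \in L^{r-1} L^{s+1}$, which by the inductive hypothesis applied with $t = s+1$ lies in $L^{(r-1)+(s+1)} = L^{r+s}$. Hence each generating product $(ab)c$ lies in $L^{r+s}$, and the induction closes.

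The step requiring the most care is the reduction of a general element of $L^r L^s$ to these left-normed triple products. Because the powers are defined by the left-normed recursion $L^{n+1} = L L^n$ rather than as an unbracketed product, one cannot apply the Leibniz identity to the expression $L^r L^s$ directly; one must first peel a single left factor off $L^r$, writing its generators as $ab$ with $a \in L$ and $b \in L^{r-1}$, before the identity becomes applicable. Once that decomposition is in place the rearranged identity $(ab)c = a(bc) - b(ac)$ does all the work, and the only remaining subtlety is bookkeeping: the induction must be set up on $r$ with $s$ free, so that both the value $t = s$ and the value $t = s+1$ fall under the inductive hypothesis.
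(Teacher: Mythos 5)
Your proof is correct and follows essentially the same route as the paper's: induction on $r$ with $s$ free, peeling a left factor off $L^r$ and applying the Leibniz identity to get $(LL^{r-1})L^s \subseteq L(L^{r-1}L^s) + L^{r-1}(LL^s)$, then using the inductive hypothesis at both $t=s$ and $t=s+1$. The paper states this at the level of subspaces while you spell it out on spanning elements, but the argument is identical.
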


\begin{proof}  We use induction over $r$.  The result holds by definition for $r=1$.  For $r>1$, we have 
$L^rL^s = (LL^{r-1})L^s \subseteq L(L^{r-1}L^s) + L^{r-1}(LL^s) $.  But $L^{r-1}L^s \subseteq L^{r+s-1}$ and  $ L^{r-1}L^{s+1} \subseteq L^{r+s}.$  The result follows.
\end{proof}

\begin{cor} \label{powers} Suppose $A \ideq L$.  Then $A^r \ideq L$.
\end{cor}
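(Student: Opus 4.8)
The plan is to prove the result by induction on $r$, showing at each stage that $A^r$ is simultaneously a left and a right ideal of $L$. The base case $r = 1$ is exactly the hypothesis $A \ideq L$. For the inductive step I would assume $A^r \ideq L$ and deduce the same for $A^{r+1} = AA^r$. Since $A^{r+1}$ is spanned by products and the conditions defining a two-sided ideal are linear, it suffices to test them on a typical product $ab$ with $a \in A$ and $b \in A^r$.

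For the left ideal property I would exploit that left multiplications are derivations. For $x \in L$, the Leibniz identity gives $x(ab) = (xa)b + a(xb)$. Here $xa \in LA \subseteq A$ because $A$ is an ideal, so $(xa)b \in AA^r = A^{r+1}$; and $xb \in LA^r \subseteq A^r$ because $A^r$ is an ideal by the inductive hypothesis, so $a(xb) \in AA^r = A^{r+1}$. Hence $x(ab) \in A^{r+1}$, which gives $LA^{r+1} \subseteq A^{r+1}$.

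For the right ideal property I would rearrange the same identity as $(ab)x = a(bx) - b(ax)$. Now $bx \in A^rL \subseteq A^r$ by the inductive hypothesis, so $a(bx) \in AA^r = A^{r+1}$; and $ax \in AL \subseteq A$ since $A$ is an ideal, so $b(ax) \in A^rA$. The main obstacle is precisely this last term: $A^{r+1}$ is defined as $AA^r$ and \emph{not} as $A^rA$, so it is not immediate that $b(ax)$ lands in $A^{r+1}$. I would resolve this by applying the previous lemma to the Leibniz algebra $A$ (an ideal, hence in particular a subalgebra, so the lemma applies): its internally computed powers coincide with the $A^n$ defined here, whence $A^rA = A^rA^1 \subseteq A^{r+1}$. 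Therefore $(ab)x \in A^{r+1}$, giving $A^{r+1}L \subseteq A^{r+1}$ and completing the induction.
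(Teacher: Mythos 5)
Your proof is correct and follows essentially the same route as the paper's: the left-ideal property comes from left multiplications being derivations, and the right-ideal property is handled by induction via the rearranged Leibniz identity $(ab)x = a(bx) - b(ax)$, with the problematic term $A^rA$ absorbed into $A^{r+1}$ by applying the preceding lemma ($A^rA^s \subseteq A^{r+s}$) to $A$ itself. You have merely written out in more detail what the paper compresses into one displayed inclusion.
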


\begin{proof}  Since left multiplications are derivations, $A^r$ is a left ideal.  I use induction over $r$ to prove it a right ideal.  We have \begin{equation*}
\qquad\qquad(AA^{r-1})L \subseteq A(A^{r-1}L) + A^{r-1}(AL) \subseteq AA^{r-1}+A^{r-1}A\subseteq A^r. \qquad\quad\qed \end{equation*}
\renewcommand{\qed}{}
\end{proof}

It might seem superfluous to prove Corollary \ref{powers}, but  a product of ideals of a Leibniz algebra need not be an ideal.

\begin{example}  Let $N = \langle a,b,c,d \rangle$ with multiplication $ab=c, ba=d, a^2 =b^2 =c^2 =d^2 =0$ and $c,d$ central.  Let $L = \langle x, N\rangle$ with $xa=a=-ax, xb=bx=0,xc=c, xd=d, cx=d, dx=-d$ and $x^2 = 0$. Let $A= \langle a,c,d\rangle$ and $B = \langle b,c,d \rangle$.  Then $A,B$ are ideals of $L$, but $AB = \langle c\rangle$ which is not an ideal.
\end{example}

Bosko, Hird, McAlister, Schwartz and Stagg, \cite{BHMSS} have proved (as a corollary to a deeper result) that  the sum of two nilpotent ideals of a finite-dimensional Leibniz algebra is nilpotent.  Using Corollary \ref{powers}, this can be proved without the assumption of finite dimension.

\begin{lemma}  \label{nilsum}Suppose that the ideals $N_1,N_2$ of $L$ are nilpotent of classes $c_1$ and $c_2$.  Then $N_1+N_2$ is nilpotent of class at most $c_1 + c_2$.
\end{lemma}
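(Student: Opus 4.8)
The plan is to set $N = N_1 + N_2$, which is again an ideal of $L$ since a sum of two-sided ideals is a two-sided ideal, and to prove directly that $N^{c_1+c_2+1} = 0$; with the usual convention this is precisely the assertion that $N$ is nilpotent of class at most $c_1 + c_2$. Because the powers are defined by $N^{k+1} = NN^k$ and $N = N_1 + N_2$, bilinearity shows that $N^m$ is spanned by the right-normed products $u = x_1(x_2(\cdots(x_{m-1}x_m)\cdots))$ in which each factor $x_i$ lies in $N_1 \cup N_2$. It therefore suffices to show that every such $u$ of length $m = c_1 + c_2 + 1$ vanishes.

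The heart of the argument is the claim that if $a$ of the factors $x_1,\dots,x_m$ lie in $N_1$, then $u \in N_1^{a}$. I would prove this by downward induction along the nesting: write $v_k = x_k(x_{k+1}(\cdots x_m))$ and let $a_k$ be the number of factors among $x_k,\dots,x_m$ lying in $N_1$, so that $v_k = x_k v_{k+1}$ and $u = v_1$, and take as inductive hypothesis $v_{k+1} \in N_1^{a_{k+1}}$ (with the convention $N_1^0 = N$). If $x_k \in N_1$ then $a_k = a_{k+1}+1$ and $v_k = x_k v_{k+1} \in N_1 N_1^{a_{k+1}} \subseteq N_1^{a_{k+1}+1}$ directly from the definition $N_1^{j+1} = N_1 N_1^{j}$ of the powers (using $N_1 N \subseteq N_1$ when $a_{k+1}=0$). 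If instead $x_k \in N_2$ then $a_k = a_{k+1}$ and $v_k = x_k v_{k+1} \in L\,N_1^{a_{k+1}} \subseteq N_1^{a_{k+1}}$, because by Corollary \ref{powers} the power $N_1^{a_{k+1}}$ is a (two-sided, hence left) ideal of $L$. This gives $u = v_1 \in N_1^{a}$. Running the same induction with the roles of $N_1$ and $N_2$ exchanged shows equally that $u \in N_2^{b}$, where $b$ is the number of factors lying in $N_2$; the argument is symmetric precisely because $N_1$ and $N_2$ are both two-sided ideals whose powers are again ideals.

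To finish, note that for a product of length $m = c_1 + c_2 + 1$ we have $a + b = c_1 + c_2 + 1$, so the pigeonhole principle forces $a \geq c_1 + 1$ or $b \geq c_2 + 1$. In the first case $u \in N_1^{a} \subseteq N_1^{c_1+1} = 0$, and in the second $u \in N_2^{b} \subseteq N_2^{c_2+1} = 0$; either way $u = 0$, whence $N^{c_1+c_2+1} = 0$. I expect the main obstacle to be the inductive claim of the previous paragraph, and within it the step absorbing a \emph{foreign} factor $x_k \in N_2$ into the power $N_1^{a_{k+1}}$. This is exactly where Corollary \ref{powers} is indispensable: without knowing that $N_1^{a_{k+1}}$ is itself an ideal one could not conclude $L\,N_1^{a_{k+1}} \subseteq N_1^{a_{k+1}}$, and indeed the Example above warns that products of ideals need not be ideals. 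A little care is also needed with the boundary convention $N_1^0 = N$ so that the induction is correctly anchored.
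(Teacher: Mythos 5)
Your proof is correct, but it takes a genuinely different route from the paper's. The paper argues by induction on $c_1+c_2$: it passes to the quotient $N/N_1^{c_1}$ (legitimate because Corollary \ref{powers} makes $N_1^{c_1}$ an ideal of $L$), where the image of $N_1$ has class at most $c_1-1$, concludes $N^{c_1+c_2}\subseteq N_1^{c_1}$ and symmetrically $N^{c_1+c_2}\subseteq N_2^{c_2}$, and then gets $N^{c_1+c_2+1}=(N_1+N_2)N^{c_1+c_2}=0$ in one line. Your argument instead expands $N^{c_1+c_2+1}$ into right-normed monomials with factors from $N_1\cup N_2$ and proves, by the downward induction along the nesting, the monomial-level refinement $u\in N_1^{a}\cap N_2^{b}$, finishing by pigeonhole; this is longer but entirely explicit, avoids any induction on the nilpotency class, and makes visible exactly where ideal-closure of the powers enters. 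Two small remarks. First, at the step absorbing a foreign factor you only ever multiply on the \emph{left}, so you need only the left-ideal half of Corollary \ref{powers} ($LA^{r}\subseteq A^{r}$), which is the easy half following directly from left multiplications being derivations; calling the full corollary indispensable there slightly overstates matters, whereas the paper's quotient argument genuinely uses the two-sided statement. Second, if a factor happens to lie in $N_1\cap N_2$ your counts satisfy $a+b\ge m$ rather than $a+b=m$, but the pigeonhole step only needs the inequality, so nothing breaks.
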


\begin{proof}  If either of $c_1, c_2$ is $0$, the result holds trivially.  I use induction over $c_1+c_2$.  Put $N = N_1+N_2$.  Then $N/N_1^{c_1} = N_1/N_1^{c_1} + (N_2 + N_1^{c_1})/N_1^{c_1}$ is nilpotent of class at most $c_1 +c_2 - 1$.  Thus $N^{c_1+c_2} \subseteq N_1^{c_1}$ and $N_1N^{c_1+c_2} = 0$.  Similarly, $N_2N^{c_1+c_2} =0$.  Thus $N^{c_1+c_2 +1} = 0$.
\end{proof}

For a finite-dimensional Leibniz algebra $L$, the nil radical $N(L)$ is defined to be the sum of all nilpotent ideals of $L$.  By Lemma \ref{nilsum}, $N(L)$ is nilpotent.

For an $L$-(bi)module $M$ and the representation $(S,T)$ of $L$ on $M$, I write $\ker(M)$ or $\ker(S,T)$ for $\ker(S) \cap \ker(T)$.  The following result is implicit in Loday and Pirashvili \cite{LP} and proved in Barnes \cite{LeibSub}.

\begin{lemma}\label{irred}  Let $L$ be a finite-dimensional Leibniz algebra and let $M$ be a finite-dimensional irreducible $L$-module.    Then $L/\ker(M)$ is a Lie algebra and either $ML=0$ or $mx = -xm$ for all $x \in L$ and $m \in M$. \end{lemma}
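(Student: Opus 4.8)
The plan is to work with the representation $(S,T)$ explicitly, writing $xm = S_x(m)$ for the left action and $mx = T_x(m)$ for the right action, and to extract from the bimodule axioms a single identity that forces the stated dichotomy. Forming the split null extension $E = L \oplus M$, with $M$ a square-zero ideal, and imposing the Leibniz identity $a(bc) = (ab)c + b(ac)$ on triples of the three types $(x,y,m)$, $(x,m,y)$ and $(m,x,y)$ yields three relations: $S_{xy} = [S_x,S_y]$, then $T_{xy} = [S_x,T_y]$, and finally $T_{xy} = T_yT_x + S_xT_y$. Comparing the last two gives the crucial identity $T_y(S_x + T_x) = 0$ for all $x,y \in L$; deriving this cleanly is the main technical step.

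Next I would set $N = \{m \in M : mL = 0\} = \bigcap_{y \in L} \ker T_y$, the annihilator of the right action, and check that it is a submodule. It is visibly stable under every $T_x$, and left-stability follows by rewriting the second relation as $T_y S_x = S_x T_y - T_{xy}$: for $n \in N$ both terms on the right annihilate $n$, so $S_x n \in N$. Since $M$ is irreducible, either $N = 0$ or $N = M$.

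The dichotomy is then immediate. If $N = M$, then $T_y = 0$ for all $y$, that is, $ML = 0$. If $N = 0$, the crucial identity says $\im(S_x + T_x) \subseteq N = 0$, so $S_x + T_x = 0$, i.e. $mx = -xm$ for all $x \in L$ and $m \in M$. Finally, in either case $T_{x^2} = [S_x,T_x]$ vanishes (because $T = 0$ or $T = -S$), while $S_{x^2} = [S_x,S_x] = 0$ holds automatically; hence $x^2 \in \ker(S,T) = \ker(M)$ for every $x \in L$, so $\Leib(L) \subseteq \ker(M)$ and $L/\ker(M)$ is a Lie algebra. The only real obstacle is the bookkeeping in deriving the three bimodule identities while keeping the left and right actions straight; once $T_y(S_x + T_x) = 0$ is in hand and $N$ is seen to be a submodule, irreducibility does the rest.
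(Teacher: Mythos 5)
Your proof is correct. Note that the paper itself gives no proof of this lemma --- it quotes the result as ``implicit in Loday and Pirashvili \cite{LP} and proved in Barnes \cite{LeibSub}'' --- but your argument is the standard one: the three bimodule relations $S_{xy}=[S_x,S_y]$, $T_{xy}=[S_x,T_y]$ and $T_{xy}=T_yT_x+S_xT_y$ are derived correctly from the left Leibniz identity on the split extension, the resulting identity $T_y(S_x+T_x)=0$ is right, the annihilator $N=\bigcap_y\ker T_y$ is indeed a submodule by the rewriting you give, and irreducibility then yields exactly the stated dichotomy, with $S_{x^2}=T_{x^2}=0$ in either case giving $\Leib(L)\subseteq\ker(M)$.
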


An $L$-module $M$ satisfying $mx=-xm$ for all $x \in L$ and all $m \in M$ is called (Lie) symmetric, while a module satisfying $ML=0$ is called antisymmetric.  For a given module $M$, I denote by $\sym{M}$, the module with the same left action as $M$ but with the right action replaced by the symmetric action.  I denote by $\asym M$ the module $M$ with the right action replaced with the zero action.  It follows from Lemma \ref{irred}, that primitive algebras come in pairs, one in which the action on the socle is symmetric, the other antisymmetric.  For the primitive algebra $P$ with socle $\soc(P)=C$, I denote by $\sym P$ the primitive algebra which is the split extension of $\sym C$ by $P/C$ and by $\asym P$ the split extension of $\asym C$ by $P/C$.  Necessarily, $P$ is one or other of these.  The $1$-dimensional algebra is primitive, both symmetric and antisymmetric, and is its own pair.

I want to show that in any module $V$, there is a composition series in which the symmetric factors are above the antisymmetric factors.  This is complicated by the fact that  a module with trivial action is both symmetric and antisymmetric. To cope with this, I give the following definition.

\begin{definition}  Let $V$ be an $L$-module and let $A/B$ be a composition factor of $V$.  Let $X$ be the split extension of $V$ by $L$.  We say that $A/B$ is {\em inner} relative to $V$ if $A/B \subseteq \Leib(X/B)$ and that $A/B$ is {\em outer} otherwise.
\end{definition}

Let $K = \Leib(X)$.  Since $\Leib(X/B) = \Leib(X)+B/B$, $A/B$ inner is equivalent to $K+B \supseteq A$.  As $K+B \supseteq A$ if and only if there exists $k \in K \cap A$, $k \notin  B$, $A/B$ outer is equivalent to $K \cap A \subseteq B$.  Note that the property is {\em relative to} $V$, not an inherent property of the module $A/B$.

\begin{lemma} \label{asyminLeib} Let $L$ be a Leibniz algebra and let $V$ be a non-trivial antisymmetric irreducible $L$-module.  Let $X$ be the split extension of $V$ by $L$.  Then $V \subseteq \Leib(X)$.
\end{lemma}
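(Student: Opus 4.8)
The plan is to exhibit a single nonzero element of $V$ lying in $\Leib(X)$ and then use irreducibility to conclude that all of $V$ is there. In the split extension $X = L \oplus V$, the subspace $V$ is an abelian ideal (so $v^2 = 0$), the left action of $L$ on $V$ is the module action, and antisymmetry of $V$ means the right action vanishes, i.e. $vx = 0$ for all $x \in L$ and $v \in V$. Recall also that $\Leib(X)$ is the subspace spanned by the squares $z^2$ with $z \in X$.

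The key step is a short computation. For $x \in L$ and $v \in V$,
$$(x+v)^2 = x^2 + xv + vx + v^2 = x^2 + xv,$$
using $vx = 0$ and $v^2 = 0$. Since $(x+v)^2$ and $x^2$ are both squares, they lie in $\Leib(X)$, and hence so does their difference $xv = (x+v)^2 - x^2$. As $x \in L$ and $v \in V$ were arbitrary, this shows $LV \subseteq \Leib(X)$.

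It remains to show $LV = V$. The identity $x(yv) = (xy)v + y(xv)$ gives $L(LV) \subseteq LV$, and the right action on $V$ is zero, so $LV$ is an $L$-submodule of $V$. Moreover $LV \neq 0$: were $LV = 0$, then together with $VL = 0$ the module $V$ would carry the trivial action, and irreducibility would force $V$ to be the one-dimensional trivial module, contrary to hypothesis. Since $V$ is irreducible, the nonzero submodule $LV$ must equal $V$, whence $V = LV \subseteq \Leib(X)$. I do not expect a genuine obstacle; the only points requiring care are using both $v^2 = 0$ and $vx = 0$ in the expansion, and noting that for an irreducible antisymmetric module non-triviality is precisely the statement $LV \neq 0$.
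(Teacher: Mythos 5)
Your proof is correct and uses the same key identity as the paper, namely $xv = (x+v)^2 - x^2 \in \Leib(X)$, followed by an appeal to irreducibility. The only cosmetic difference is that you identify the nonzero submodule as $LV$, whereas the paper observes that $V \cap \Leib(X) \neq 0$ and is a submodule; both routes are essentially the same argument.
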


\begin{proof}  If $v \in V$ and $v\ne 0$, then there exists $x \in L$ such that $xv \ne 0$.  But $xv = (x+v)^2 - x^2 \in \Leib(X)$.  Thus $V \cap \Leib(X) \ne 0$.  Since $V$ is irreducible, this implies that $V  \subseteq \Leib(X)$.
\end{proof}

Thus a non-trivial antisymmetric composition factor of any module is inner.  Clearly, a non-trivial symmetric composition factor is outer.  For trivial composition factors, the usual correspondence between factors of different composition series need not preserve the property of being inner.

\begin{example}  Let $L = \langle x \rangle$ and let $V = \langle a,b,c \rangle$ with $xa = b, xb = xc = ax =bx =cx =0$.  Then in the notation used above, $\Leib(X)= K = \langle b \rangle$ and we have (at least) the following submodules:
\begin {center} \setlength{\unitlength}{1mm}
\begin{picture}(50.5,30) 
\put(25,28){\circle*{1.5}}
\put(21,27){$V$}
\put(25,28){\line(0,-1){6}} \put(25,22){\circle*{1.5}} \put(16,21){$\langle b,c \rangle$} 
\put(25,22){\line(-1,-1){10}} \put(15,12){\circle*{1.5}} \put(1,11){$K = \langle b \rangle$} 
\put(20,11){$\langle c \rangle$} \put(25,12){\circle*{1.5}} \put(25,22){\line(0,-1){10}} 
\put(25,22){\line(1,-1){10}} \put(35,12){\circle*{1.5}} \put(36,11){$\langle b+c \rangle$}
\put(15,12){\line(1,-1){10}} \put(25,2) {\circle*{1.5}} \put(21.5,0.5){$0$}
\put(25,12){\line(0,-1){10}} \put(35,12){\line(-1,-1){10}}
\end{picture}
\end{center}
The factor $\langle b,c\rangle/\langle c \rangle$ of the composition series $V \supset \langle b,c\rangle \supset \langle c \rangle \supset 0$ is inner and corresponds to the factor $\langle b+c \rangle$ of the composition series $V \supset \langle b,c\rangle \supset \langle b+c \rangle \supset 0$.  But $\langle b+c \rangle$ is outer.
\end{example}

\begin{theorem}\label{inout} Let $L$ be a Leibniz algebra and let $V$ be an $L$-module.  Then there exists a unique submodule $W$ of $V$ such that every composition factor of $W$ is inner and every composition factor of $V/W$ is outer. 
\end{theorem}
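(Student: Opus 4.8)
The plan is to exhibit $W$ explicitly and then show it is forced. Let $X$ be the split extension of $V$ by $L$, put $K = \Leib(X)$, and take
$$W = K \cap V.$$
Since $V$ is a two-sided ideal of $X$ and $K$ is a two-sided ideal of $X$, the intersection $K \cap V$ is an ideal of $X$ lying inside $V$, hence an $L$-submodule of $V$. This is my candidate.

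For existence I would lean entirely on the two reformulations recorded just before the statement: a composition factor $A/B$ is \emph{inner} exactly when $K + B \supseteq A$, and \emph{outer} exactly when $K \cap A \subseteq B$. Refine a composition series of $V$ through $W$. If $A/B$ is a factor lying below $W$, that is $A \subseteq W$, then $A \subseteq W \subseteq K \subseteq K + B$, so the factor is inner. If $A/B$ lies above $W$, that is $B \supseteq W$, then $K \cap A \subseteq K \cap V = W \subseteq B$, so the factor is outer. Hence every factor of $W$ is inner and every factor of $V/W$ is outer, as required.

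For uniqueness, suppose $W'$ is any submodule all of whose factors are inner and all of whose factors of $V/W'$ are outer. I would prove $W' = W$ by two telescoping inductions carried out along fixed composition series. Choosing a composition series $W' = U_0 \supset U_1 \supset \cdots \supset U_p = 0$, the inner condition gives $K + U_{i+1} \supseteq U_i$ for each $i$; starting from $U_p = 0 \subseteq K$ and working upward, $U_{i+1} \subseteq K$ forces $U_i \subseteq K + U_{i+1} = K$, so $W' = U_0 \subseteq K$ and therefore $W' \subseteq K \cap V = W$. Dually, choosing $V = V_0 \supset V_1 \supset \cdots \supset V_q = W'$, the outer condition gives $K \cap V_i \subseteq V_{i+1}$; starting from $K \cap V \subseteq V = V_0$ and working downward, $K \cap V \subseteq V_i$ together with $K \cap V \subseteq K$ gives $K \cap V \subseteq K \cap V_i \subseteq V_{i+1}$, so $W = K \cap V \subseteq V_q = W'$. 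The two inclusions yield $W' = W$.

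The step I expect to need the most care is uniqueness, precisely because inner/outer is not an intrinsic property of a composition factor but depends on its position in the series: the earlier example shows that a trivial factor can be inner in one series and outer in another. This blocks any Jordan--H\"older style argument that would try to match up the factors of the series for $W'$ against those for $W$. The telescoping inductions above are designed to sidestep this, since they extract information directly from the chains via the containments $K + U_{i+1} \supseteq U_i$ and $K \cap V_i \subseteq V_{i+1}$, so the awkward behaviour of trivial factors never has to be confronted.
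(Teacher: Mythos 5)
Your proof is correct and takes essentially the same route as the paper: both identify $W = \Leib(X) \cap V$ and establish uniqueness by the telescoping containments $K + U_{i+1} \supseteq U_i$ (forcing $W' \subseteq K$) and $K \cap V_i \subseteq V_{i+1}$ (forcing $K \cap V \subseteq W'$). The only difference is that you spell out the existence verification for $K \cap V$, which the paper dismisses as clear.
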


\begin{proof} Let $X$ be the split extension of $V$ by $L$ and let $K = \Leib(X)$. Let $W$ be a submodule of $V$ and let 
$$V = V_0 \supset V_1 \supset \dots \supset V_r = W = W_0 \supset W_1 \supset \dots \supset W_s = 0$$
with all the $V_i/V_{i+1}$ outer and all the $W_j/W_{j+1}$ inner.  Since $K \cap V_i \subseteq V_{i+1}$, $K \cap V \subseteq W$.  Since $K + W_i \supseteq W_{i-1}$, $K \cap V \supseteq W$.  Thus $W = K \cap V$.  Clearly, $W = K \cap V$ has the asserted properties.
\end{proof}

We cannot interchange inner and outer in Theorem \ref{inout}.  To see this, we use the Loday-Pirashvili short exact sequence.  Let $L$ be a Lie algebra and let $V$ be a left $L$-module.  We put $\bar{V} = \Hom(L,V) \oplus V$ as left $L$-module and make it a bimodule by setting $(f,v)x = (-xf, f(x))$ for $x\in L$, $f \in \Hom(L,V)$ and $v \in V$.  The Loday-Pirashvili short exact sequence is the sequence
$$0 \to \asym{V} \to \bar{V} \to \Hom(L,V) \to 0.$$

\begin{lemma} \label{LP}  Suppose that there exist linearly independent elements $a,b \in L$ such that the left action $T_a$ of $a$ on $V$ is invertible.  Then the Loday-Pirashvili short exact sequence does not split.
\end{lemma}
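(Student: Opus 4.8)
The plan is to assume a splitting exists and extract from it a linear map $\sigma$ whose very existence contradicts the hypotheses. A splitting is a bimodule section $s\co \Hom(L,V)\to\bar V$ of the projection $\pi\co\bar V\to\Hom(L,V)$, $\pi(f,v)=f$. Since $\pi s=\mathrm{id}$, any such $s$ must have the form $s(f)=(f,\sigma(f))$ for some linear $\sigma\co\Hom(L,V)\to V$, so the whole question is whether $\sigma$ can be chosen to make $s$ respect both actions.

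First I would read off the two constraints on $\sigma$. The left action on $\bar V$ is the direct-sum action $x(f,v)=(xf,xv)$, so comparing second components in $s(xf)=x\,s(f)$ forces $\sigma(xf)=x\sigma(f)$, i.e. $\sigma$ is a left module homomorphism. For the right action, the induced right action on the quotient $\Hom(L,V)$ is $f\cdot x=-xf$ (the first component of $(f,v)x=(-xf,f(x))$, which is well defined since $\asym V$ has zero right action). Comparing second components in $s(f\cdot x)=s(f)\cdot x$, that is, in $(-xf,-\sigma(xf))=(-xf,f(x))$, forces $\sigma(xf)=-f(x)$. Combining the two constraints gives the single identity
\[
 x\,\sigma(f)=-f(x)\qquad\text{for all }x\in L,\ f\in\Hom(L,V).
\]
Equivalently, writing $\Phi\co V\to\Hom(L,V)$ for the map $\Phi(v)\co x\mapsto xv$, a splitting can exist only if $-\sigma$ is a right inverse of $\Phi$; in particular $\Phi$ must be surjective.

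The final step is to contradict surjectivity of $\Phi$, and this is exactly where linear independence of $a$ and $b$ enters. We may assume $V\ne 0$, the case $V=0$ being trivial. Since $a,b$ are linearly independent, I can choose $f\in\Hom(L,V)$ with $f(a)=0$ and $f(b)=w$ for some fixed $w\ne 0$. If this $f$ equalled $\Phi(v)$ for some $v$, then $f(a)=av=T_a v=0$ would force $v=0$ because $T_a$ is invertible (injectivity already suffices), whence $f(b)=bv=0$, contradicting $f(b)=w\ne 0$. Thus $f\notin\im\Phi$, $\Phi$ is not surjective, no $\sigma$ satisfying the identity above exists, and the sequence does not split. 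The one place demanding care is the bookkeeping of the right action and its sign on the quotient $\Hom(L,V)$; once the identity $x\sigma(f)=-f(x)$ is secured, the obstruction reduces to the elementary fact that a single vector cannot reproduce an arbitrary homomorphism $f$ as soon as $\dim L\ge 2$.
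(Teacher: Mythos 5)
Your proof is correct and is essentially the paper's own argument: both identify a splitting (equivalently, a complementary submodule) with a linear map $\Hom(L,V)\to V$, extract the same compatibility condition $x\sigma(f)=-f(x)$ from the left and right actions, and derive a contradiction from an $f$ vanishing at $a$ but not at $b$, using invertibility (indeed only injectivity) of $T_a$. Your repackaging of the obstruction as non-surjectivity of $v\mapsto(x\mapsto xv)$ is only a cosmetic difference.
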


\begin{proof}  Suppose that $U$ is a submodule which complements $\asym{V}$ in $\bar{V}$.Then for some linear map $\phi \co \Hom(L,V) \to V$, we have $U = \{(f,\phi(f)) \mid f \in \Hom(L,V) \}$.  For $U$ to be a submodule, for all $x \in L$ and $f \in \Hom(L,V)$, we need
$$x(f, \phi(f)) = (xf,x\phi(f)) = (xf, \phi(xf)),$$
and
$$(f,\phi(f))x =(-xf, f(x)) =(-xf, \phi(-xf)).$$
  Thus we must have $\phi(xf) = x\phi(x)$ and $f(x)=-\phi(xf) = -T_x\phi(f)$.  Since $a,b$ are not linearly dependent, there exists $f \in \Hom(L,V)$ such that $f(a)=0$ and $f(b)\ne 0$.  Then $\phi(f) = - T_a^{-1}f(a) = 0$ and $f(b) = -T_b \phi(f) = 0$ contrary to the choice of $f$.  Hence the sequence  does not split.
\end{proof}

 As for soluble Lie algebras, we have:
\begin{lemma} \label{nrad} Let $L$ be a soluble Leibniz algebra. Then the intersection of the centralisers of the chief factors of $L$ is $N(L)$. \qed
\end{lemma}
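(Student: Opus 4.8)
The plan is to show that the ideal $C:=\bigcap C_L(A/B)$, the intersection taken over all chief factors $A/B$ of $L$, coincides with $N(L)$ by proving the two inclusions separately. Here $C_L(A/B)$ denotes the kernel $\ker(S)\cap\ker(T)$ of the representation $(S,T)$ of $L$ on the bimodule $A/B$; a short computation with the Leibniz identity (writing products of elements of the kernel through $(uv)m=u(vm)-v(um)$ and the analogue for the right action) shows this kernel is a two-sided ideal, so $C$ is an ideal of $L$. Throughout I use that a chief factor $A/B$ is an irreducible $L$-bimodule, so by Lemma \ref{irred} it is either symmetric or antisymmetric; in both cases the right action is determined by the left one (it is $-S$ or $0$), so the $L$-submodules of $A/B$ are exactly the subspaces invariant under the left action, and an element $x$ centralises $A/B$ as soon as $xA\subseteq B$.

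For $N(L)\subseteq C$ it suffices to show that the nilpotent ideal $N:=N(L)$ centralises each chief factor $A/B$. The key point is that $N$ is two-sided, so $NA\subseteq NL\subseteq N$. Set $A_0=A$ and $A_{j+1}=NA_j+B$; using that left multiplications are derivations one checks that each $A_j$ is an ideal of $L$ containing $B$, so $A_j/B$ is a submodule of $A/B$, and an easy induction gives $A_j\subseteq N^j+B$. Since $N$ is nilpotent, $N^m=0$ for some $m$, whence $A_m\subseteq B$. Thus the descending chain $A/B=A_0/B\supseteq A_1/B\supseteq\dots\supseteq A_m/B=0$ of submodules reaches $0$; as $A/B$ is irreducible and $A_1/B=N(A/B)$ is a submodule, either $N(A/B)=0$ or $A_1/B=A/B$, and the latter would force every $A_j/B=A/B\neq0$, contradicting $A_m\subseteq B$. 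Hence $NA\subseteq B$, so $N$ annihilates $A/B$ on the left, and by the symmetric/antisymmetric alternative it annihilates it on the right as well. Therefore $N\subseteq C_L(A/B)$ for every chief factor, giving $N(L)\subseteq C$.

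For the reverse inclusion, fix a chief series $0=L_0\subset L_1\subset\dots\subset L_n=L$, which exists because $L$ is soluble and finite-dimensional. Since $C$ centralises each factor $L_{i+1}/L_i$ we have $CL_{i+1}\subseteq L_i$, and induction on $k$ yields $C^{k+1}\subseteq L_{n-k}$, since $C^{k+1}=CC^k\subseteq CL_{n-k+1}\subseteq L_{n-k}$. In particular $C^{n+1}\subseteq L_0=0$, so $C$ is a nilpotent ideal and hence $C\subseteq N(L)$. Combining the two inclusions gives $C=N(L)$.

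The main obstacle is the first inclusion: a nilpotent ideal need not act nilpotently on an arbitrary module, so the argument must exploit that $A$ is itself an ideal and that $N$ is two-sided, which is exactly what yields $NA\subseteq N$ and the containment $A_j\subseteq N^j+B$. The points needing care are the verification that each $A_j/B$ is a genuine sub-bimodule (where Lemma \ref{irred} is used to reduce to left-invariance) and the preliminary observation that $C_L(A/B)$ is an ideal.
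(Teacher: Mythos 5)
Your proof is correct and is essentially the argument the paper has in mind: the lemma carries the \qedsymbol\ marker meaning the proof is the standard one for soluble Lie algebras, namely that a nilpotent ideal acts nilpotently on each chief factor (via the chain $N^jA+B$) so irreducibility forces trivial action, while conversely the intersection of the centralisers is nilpotent by running down a chief series. Your only additions are the genuinely necessary Leibniz adjustments --- checking that the bimodule kernel is a two-sided ideal and using Lemma \ref{irred} to deduce right-triviality and right-invariance from the left --- which is exactly how the Lie proof transfers.
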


For a Lie algebra $L$, the Frattini subalgebra $\Phi(L)$ is an ideal if $\ch(F)=0$ by the Tuck-Towers Theorem \cite[Corollary 3.3]{Tow}, or if $L$ is soluble by Barnes and Gastineau-Hills \cite[Lemma 3.4]{BGH}.  The latter result fails for Leibniz algebras as is shown by the following example.

\begin{example}  Suppose $\ch(F)=p$.  Let $M = \langle a,b,z \mid ab=z, az=bz=0 \rangle$ be the non-abelian $3$-dimensional nilpotent Lie algebra over $F$ and let $V$ be the faithful irreducible $M$-module $V = \langle v_0, \dots, v_{p-1} \rangle$ with $av_i = iv_{i-1}, bv_i = v_{i+1}$ and $zv_i=v_i$.  We make this a bimodule setting $VM=0$.  Let $L$ be the split extension of $V$ by $M$.  Then $L$ is a primitive Leibniz algebra, $M$ is the only maximal subalgebra of $L$ not containing $V$ and $\Phi(L) = \langle z \rangle$ which is not an ideal of $L$.
\end{example}

Because of this, it is sometimes convenient to work instead with the Frattini ideal $\Psi(L)$, the largest ideal of $L$ contained in $\Phi(L)$.  However, the characteristic $0$ result does extend to Leibniz algebras.

\begin{theorem}  Let $L$ be a Leibniz algebra over the field $F$ of characteristic $0$.  Then $\Phi(L) \id L$.
\end{theorem}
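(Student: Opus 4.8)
The plan is to prove the theorem by reducing to two settings that are already under control: the Lie quotient $L/\Leib(L)$, where the characteristic-$0$ theorem of Tuck and Towers \cite[Corollary 3.3]{Tow} gives that the Frattini subalgebra is an ideal, and the primitive quotients $L/\core_L(M)$, where I shall show that the hypothesis $\ch(F)=0$ forces the Frattini subalgebra to vanish. Throughout I would use one elementary fact: for any ideal $J$ of $L$ the image of $\Phi(L)$ in $L/J$ is contained in $\Phi(L/J)$, because the maximal subalgebras of $L/J$ are exactly the images of the maximal subalgebras of $L$ that contain $J$. It suffices to show, for every $\phi\in\Phi(L)$ and $a\in L$, that $a\phi$ and $\phi a$ lie in each maximal subalgebra $M$ of $L$, and I would split according to whether $M$ contains $K:=\Leib(L)$.

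If $K\subseteq M$, then $M/K$ is maximal in the Lie algebra $\bar L=L/K$, so $\Phi(\bar L)\subseteq M/K$. By \cite[Corollary 3.3]{Tow}, $\Phi(\bar L)\id\bar L$, so its preimage $P$ is an ideal of $L$ with $K\subseteq P$ and $\Phi(L)\subseteq P\subseteq M$; hence $a\phi,\phi a\in P\subseteq M$. If $K\not\subseteq M$, I would set $J=\core_L(M)$, so that $L'=L/J$ is primitive with core-free maximal subalgebra $M'=M/J$. Since $K\not\subseteq J$, the ideal $\Leib(L')=(K+J)/J$ is non-zero, so the socle $C'=\soc(L')$ lies in $\Leib(L')$ and is therefore antisymmetric, i.e.\ $C'L'=0$. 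Writing $L'=C'\oplus M'$ (as vector spaces, with $C'$ a minimal ideal complemented by $M'$), the maximal subalgebras of $L'$ containing $C'$ are the $C'\oplus M''$ with $M''$ maximal in $M'$, so their intersection is $C'\oplus\Phi(M')$; intersecting with the maximal subalgebra $M'$ itself gives $\Phi(L')\subseteq\Phi(M')$. Now $C'$ is self-centralising (as $L'$ is primitive) and antisymmetric, so $M'\cong L'/C'$ acts faithfully and irreducibly on $C'$; in characteristic $0$ this makes $M'$ reductive, whence $\Phi(M')=0$. Thus $\Phi(L')=0$, the image of $\phi$ in $L'$ vanishes, $\phi\in J$, and since $J\id L$ we obtain $a\phi,\phi a\in J\subseteq M$.

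I expect the case $K\not\subseteq M$ to be the main obstacle, and it is exactly where the statement departs from the Lie case. One cannot simply pass to $L/K$: by Lemma \ref{asyminLeib}, the split extension of a non-trivial antisymmetric irreducible module shows that $\Leib(L)$ need not lie in $\Phi(L)$, even when $\ch(F)=0$. The device that makes the argument work is to descend instead to the primitive quotient $L/\core_L(M)$, whose socle is forced into the Leibniz kernel and is hence antisymmetric; the characteristic-$0$ input that the complement $M'$ is reductive, so that $\Phi(M')=0$, is precisely what fails in positive characteristic and is the crux. The remaining verifications --- that $\core_L(M)$ is an ideal, that $L'=C'\oplus M'$, and the identification of the maximal subalgebras containing $C'$ --- are routine and parallel the Lie theory.
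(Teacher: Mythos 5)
Your argument is correct, and it is organized quite differently from the paper's. The paper proceeds by induction on dimension: if every maximal subalgebra contains a minimal ideal, then $\Phi(L)$ is an intersection of ideals (the preimages of the various $\Phi(L/A)$); otherwise it fixes a maximal subalgebra containing no minimal ideal, produces an abelian minimal ideal $A$ with $\cser_L(A)=A$ and $L/A$ a Lie algebra, and then shows $\Phi(L/A)=0$ by hand, proving that the soluble radical $R$ of $L/A$ is abelian and then using a Levi factor $S$, the Tuck--Towers theorem for $\Phi(S)=0$, and complete reducibility of $R$ as $S$-module. You avoid induction entirely, treating each maximal subalgebra $M$ separately and exhibiting in each case an ideal that contains $\Phi(L)$ and is contained in $M$: the preimage of $\Phi(L/\Leib(L))$ when $M\supseteq\Leib(L)$ (pure Tuck--Towers), and $\core_L(M)$ otherwise. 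In the second case your reduction to the primitive quotient $L'=L/\core_L(M)$ with antisymmetric self-centralising socle is exactly the configuration the paper isolates, and your appeal to the classical fact that a characteristic-$0$ Lie algebra with a faithful irreducible module is reductive (hence has trivial Frattini subalgebra) is precisely what the paper proves inline with its Levi-factor computation; so the essential characteristic-$0$ input is the same, but your decomposition is cleaner and makes the role of $\Leib(L)$ explicit. Two points to tighten in a final write-up: the assertion $\soc(L')\subseteq\Leib(L')$ should be derived rather than stated --- choose a minimal ideal $C'$ inside the non-zero abelian ideal $\Leib(L')$, check that $C'\cap M'$ and $\cser_{L'}(C')\cap M'$ are ideals of $L'$ killed by core-freeness of $M'$, and deduce from $\cser_{L'}(C')=C'$ that $C'$ is the unique minimal ideal; and you should record that $M'\cong L'/\cser_{L'}(C')$ is genuinely a Lie algebra (by Lemma \ref{irred}, or directly because $\Leib(L')\subseteq\cser_{L'}(C')=C'$) before invoking reductivity, and that a reductive Lie algebra in characteristic $0$ has zero Frattini subalgebra because its centre is abelian and its semisimple part has trivial Frattini subalgebra.
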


\begin{proof}  If $A \id L$, by induction, we may suppose that the intersection $K$ of the maximal subalgebras containing $A$ is an ideal.  The result holds if $L$ is a Lie algebra, so we may suppose that there exists an abelian minimal ideal $A$ of $L$.   If every maximal subalgebra contains some minimal ideal, then the result holds, so we may suppose that there exists a maximal subalgebra $M$ which does not contain any minimal ideal of $L$.  Then $M \cap A =0$.  Since $\cser_L(A) \cap M \id L$, we have $\cser_L(A)=A$ and by Lemma \ref{irred} , $L/A$ is a Lie algebra.  

Consider the Lie algebra $X = L/A$.  Let $R$ be the soluble radical of $X$.  Let $B$ be a minimal $R$-submodule of $A$.  Then $R'$ acts trivially on $B$ since $R'+A$ is nilpotent.  But all $R$-composition factors of $A$ are isomorphic, so $R'$ acts nilpotently on $A$.  Therefore, there exists $a \in A, a \ne 0$ such that $R'a=0$.  As $R'\id X$, $\{a \in A \mid R'a=0\}$ is an $X$-submodule of $A$.  Since $A$ is a faithful irreducible $X$-module, this implies that $R'=0$.

Let $S$ be a Levi factor of $X$.  Then $\Phi(S)=0$ by the Tuck-Towers Theorem \cite[Corollary 3.3]{Tow}.  Since $X/R \simeq S$, $\Phi(X) \subseteq R$.  As $S$-module, $R$ is completely reducible, $R = R_1 \oplus \dots \oplus R_n$ for some irreducible $S$-submodules $R_i$.  Put $R^i = \sum_{j \ne i}R_i$.  Then $S+R^i$ is a maximal subalgebra of $X$ and it follows that $\Phi(X) = 0$. Thus the intersection of the maximal subalgebras of $L$ which contain $A$ is $A$ and it follows that $\Phi(L) = 0$.
\end{proof}

\section{Classes and Projectors}\label{proj}
Following the notations of Doerk and Hawkes, I denote the class of all finite dimensional soluble Leibniz algebras over the field $F$ by $\f{S}$, the class of nilpotent Leibniz algebras by $\f{N}$, the class of abelian Leibniz algebras by $\f{A}$ and the class of primitive Leibniz algebras by $\f{P}$.  For a class
$\f{X}$, I define the closure operations
\begin{equation*}\begin{split}
\quot \f{X} &= \{L/K \mid L \in \f{X}\}\\
\sdir \f{X} &= \{L \mid \exists \text{ ideals $K_i$ of $L$  with $(L/K_i) \in \f{X}$ and $\cap_i K_i = 0$}\}\\
\frat \f{X} &= \{L \mid \exists \text{ ideal $K$ of $L$  with $K \le \Phi(L)$ and $L/K \in \f{X}$}\}\\
\prim \f{X} &= \{L \mid \quot(L) \cap \f{P} \subseteq \f{X}\}.\\
\end{split}\end{equation*}
Thus $\quot\f{X}$ is the class of quotients of Leibniz algebras in $\f{X}$, $\sdir\f{X}$ is the  
class of subdirect sums and $\frat\f{X}$ the class of Frattini extensions of algebras in $\f{X}$, while
$\prim\f{X}$ is the class of all algebras whose primitive quotients are in $\f{X}$.

\begin{definition} A non-empty class $\f{X}$ of soluble Leibniz algebras which is $\quot$-closed,
that  is,   $\quot\f{X} =\f{X}$, is called a {\em homomorph}.  An $\sdir$-closed homomorph is
called a {\em formation.}  A non-empty class which is $\frat$-closed is called {\em saturated.}     
A non-empty class $\f{X}$ satisfying $\prim\f{X} = \f{X}$ is called a {\em Schunck class.}
\end{definition}

These definitions differ from those of Doerk and Hawkes by the inclusion of the requirement, 
convenient for the  theory of Leibniz algebras but not for that of finite groups,  that the classes    
be non-empty.  Note also that {\em saturation} had a different meaning, explained below, in the older
terminology.  Clearly, a Schunck class is a saturated homomorph.  If $\f{X}$ is a homomorph, then $\prim\f{X}$ is the smallest Schunck class containing $\f{X}$.

There are some results in the literature giving stronger forms of $\frat$-closure.  We say that $\f{X}$ is $\fratn$-closed ($\fratsn$-closed, $\fratsnr$-closed) if $A \id L$, (respectively $A \sn L$, $A \snr L$), $B \id A$, $A/B \in \f{X}$ and $B \subseteq \Phi(L)$ imply $A \in \f{X}$.   (It is not claimed that $\fratn, \fratsn$ or $\fratsnr$ are closure operations.) It was shown in Barnes \cite[Theorem 3.6]{LeibEngel} that the class $\fN$ of nilpotent Leibniz algebras is $\fratsnr$-closed.   In Barnes and Newell \cite[Theorem 4.3]{BN}, it was proved that all  Schunck classes of soluble Lie algebras are $\fratsn$-closed.  This does not hold for Schunck classes of Leibniz algebras.  We prove below, that Schunck formations of Leibniz algebras are $\fratsn$-closed, but give an example of a Schunck class which is not $\fratn$-closed.

\begin{lemma}  Let $\f{X}$ be  homomorph which is not a formation.  Then there exists a Leibniz algebra $L$ with minimal ideals, $K_1, K_2$ such that $L/K_i \in \f{X}$, $i=1,2,$ but
$L \not\in \f{X}$. \qed\end{lemma}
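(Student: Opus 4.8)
The plan is to argue by taking a minimal-dimensional counterexample. Since $\f{X}$ is a homomorph that fails to be a formation, by the definition of $\sdir$ there is a Leibniz algebra $L$ with $L\notin\f{X}$ admitting ideals whose intersection is $0$ and all of whose corresponding quotients lie in $\f{X}$. Among all such $L$ I would choose one of least dimension. As $L$ is finite dimensional, the vanishing intersection is witnessed by a finite family $K_1,\dots,K_n$ of ideals with $\bigcap_i K_i=0$ and each $L/K_i\in\f{X}$; I take such a family with $n$ least. Then $n\ge 2$, since $n=1$ would force $K_1=0$ and hence $L=L/K_1\in\f{X}$.

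First I would reduce to $n=2$. Suppose $n\ge 3$ and set $D=K_2\cap\cdots\cap K_n$, so that $K_1\cap D=0$ and, by minimality of $n$, $D\ne 0$. If $L/D\notin\f{X}$, then the ideals $K_i/D$ for $i\ge 2$ exhibit $L/D$ as a smaller counterexample, since their intersection is $0$ and $(L/D)/(K_i/D)\cong L/K_i\in\f{X}$, contradicting minimality of $\dim L$. If instead $L/D\in\f{X}$, then $\{K_1,D\}$ is a shorter admissible family, contradicting minimality of $n$. Hence $n=2$, and I have ideals $K_1,K_2$ with $K_1\cap K_2=0$, $L/K_1,L/K_2\in\f{X}$ and $L\notin\f{X}$; in particular $K_1,K_2\ne 0$.

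The crux is to replace $K_1$ and $K_2$ by \emph{minimal} ideals. For this I would establish the reduction: if $0\ne A\ideq L$ with $A\subseteq K_1$, then $L/A\in\f{X}$. The key computation is that in $L/A$ the ideals $K_1/A$ and $(K_2+A)/A$ meet trivially, a short check giving $K_1\cap(K_2+A)=A$ from $K_1\cap K_2=0$ and $A\subseteq K_1$; meanwhile the two quotients $L/K_1$ and $L/(K_2+A)$ both lie in $\f{X}$, the latter as a quotient of $L/K_2$ via $\quot$-closure. Thus $L/A\in\sdir\f{X}$, and since $A\ne 0$ gives $\dim(L/A)<\dim L$, minimality forces $L/A\in\f{X}$. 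Now $\{A,K_2\}$ still satisfies all the hypotheses, so I may take $A$ to be a minimal ideal contained in $K_1$ and replace $K_1$ by it. Applying the same reduction with the roles of the two ideals interchanged to a minimal ideal inside $K_2$ then makes $K_2$ minimal as well, yielding minimal ideals $K_1,K_2$ with $K_1\cap K_2=0$, $L/K_i\in\f{X}$ and $L\notin\f{X}$, as required.

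The main obstacle is exactly this last step. One cannot simply choose minimal ideals inside $K_1$ and $K_2$ at the outset, because $L/K_i\in\f{X}$ does not descend to $L/A$ for $A\subsetneq K_i$: that quotient is larger, not smaller. It is the minimal-dimension hypothesis, fed by the trivial-intersection computation, that upgrades $L/A\in\sdir\f{X}$ to $L/A\in\f{X}$ and thereby legitimizes the shrinking.
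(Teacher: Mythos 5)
Your proof is correct and is essentially the standard minimal-counterexample argument (as in Barnes--Gastineau-Hills for Lie algebras) that the paper invokes implicitly by omitting the proof: pick $L\in\sdir\f{X}\setminus\f{X}$ of least dimension, reduce to two ideals with trivial intersection, and use the modular-law computation $K_1\cap(K_2+A)=A$ together with minimality of $\dim L$ to shrink each ideal to a minimal one. No gaps.
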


\begin{definition}  Let $\f{X}$ be a class of Leibniz algebras.  A  subalgebra $U$ of $L$ is
called {\em $\f{X}$-maximal} in $L$ if it is maximal in the set of those subalgebras of $L$
which are in $\f{X}$.
\end{definition}

\begin{definition} Let $\f{X}$ be a homomorph.  A subalgebra $U$ of $L$ is called an
{\em $\f{X}$-projector} of $L$ if, for every ideal $K$ of $L$, $U+K/K$ is $\f{X}$-maximal
in $L/K$. \end{definition}

\begin{definition} Let $\f{X}$ be a  homomorph.  A subalgebra $U$ of $L$ is called an {\em $\f{X}$-covering subalgebra} of $L$ if, whenever $V$ is a  subalgebra of $L$
containing $U$ and $K$ is an ideal of $V$ with $V/K \in \f{X}$, we have $U+K = V$.  (In the older terminology of \cite{BGH}, these subalgebras were called $\f{X}$-projectors.)
\end{definition}

Thus, an $\f{X}$-covering subalgebra $U$ of $L$ is an $\f{X}$-projector of every  subalgebra of $L$ which contains $U$.  We denote the (possibly empty) set of $\f{X}$-projectors of $L$ by $\Proj_\f{X}(L)$ and the set of $\f{X}$-covering subalgebras by $\Cov_\f{X}(L)$.  

\begin{lemma}\label{Hcomp} Let $\f{X}$ be a homomorph and let $A$ be a minimal ideal of $L$.  Suppose $L/A \in \f{X}$, $L \not\in \f{X}$ and that $U \in \Proj_\f{X}(L)$.  Then $U$ complements $A$ in $L$. \qed\end{lemma}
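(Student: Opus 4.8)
The plan is to read off the two conditions defining a complement — $U + A = L$ and $U \cap A = 0$ — from the projector hypothesis, using the minimality of $A$ to control the intersection.

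First I would apply the definition of $\f{X}$-projector to the ideal $A$ itself: $(U+A)/A$ is $\f{X}$-maximal in $L/A$. Since $L/A \in \f{X}$, the whole algebra $L/A$ is an $\f{X}$-subalgebra of itself, so no proper subalgebra can be $\f{X}$-maximal; hence $(U+A)/A = L/A$, i.e. $U + A = L$. Applying the definition instead to the zero ideal shows $U$ is $\f{X}$-maximal in $L$, and as $L \notin \f{X}$ this gives $U \ne L$.

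It remains to prove $U \cap A = 0$, and the key step is to show that $U \cap A$ is an ideal of $L$. Since we work inside $\f{S}$, the ideal $A$ is soluble, so $A^2 \ne A$; by Corollary \ref{powers} $A^2 \ideq L$, so minimality of $A$ forces $A^2 = 0$ and $A$ is abelian. Now $U$ normalises $U \cap A$, since $U(U \cap A) \subseteq UU \cap UA \subseteq U \cap A$ and symmetrically on the right (using that $U$ is a subalgebra and $A$ an ideal), while $A$ normalises $U \cap A$ because $A(U \cap A) \subseteq A^2 = 0$ and likewise $(U \cap A)A = 0$. As $L = U + A$ and both summands normalise $U \cap A$, we get $U \cap A \id L$.

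Finally, minimality of $A$ leaves only $U \cap A = 0$ or $U \cap A = A$; the latter would give $A \subseteq U$, hence $U = U + A = L$, contradicting $U \ne L$. Thus $U \cap A = 0$, which with $U + A = L$ says $U$ complements $A$. I expect the only non-trivial part to be the verification that $U \cap A$ is an ideal, and within that the crux is deducing via Corollary \ref{powers} that $A$ is abelian; the two maximality facts are immediate from unwinding the projector definition.
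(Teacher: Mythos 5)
Your proof is correct and is essentially the argument the paper omits (the \qedsymbol{} after the statement indicates the proof is the same as for soluble Lie algebras): take $K=A$ and $K=0$ in the projector definition to get $U+A=L$ and $U\ne L$, note $A$ is abelian since $A^2\ideq L$ is a proper subideal of the minimal ideal $A$, and conclude $U\cap A\id L$ hence $U\cap A=0$. The only Leibniz-specific point is checking that $A^2$ and $U\cap A$ are two-sided ideals, which you handle correctly via Corollary \ref{powers} and the left/right normalisation computations.
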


\begin{lemma} \label{selfnorm} Let $\f{X} \ne \{0\}$ be a homomorph.  Suppose $H \in \Cov_\f{X}(L)$ and that $H$ is contained in the subalgebra $U$ of $L$.  Then $\nser^r_L(U) = U$. \end{lemma}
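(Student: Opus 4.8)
The plan is to combine the defining property of a covering subalgebra with the elementary observation that a nonzero homomorph necessarily contains the one-dimensional algebra. I would argue by contradiction, producing from a proper extension of $U$ a one-dimensional quotient that the covering property cannot accommodate.

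First I would record that the one-dimensional algebra lies in $\f{X}$. Since $\f{X}\ne\{0\}$, choose $0\ne A\in\f{X}$. As $A$ is soluble, $A^2\ne A$, so the abelian quotient $A/A^2$ is nonzero and admits a one-dimensional quotient; by $\quot$-closure of the homomorph $\f{X}$ this one-dimensional algebra belongs to $\f{X}$. Next I would set $V=\nser^r_L(U)$, so that $U\id V$ and $H\subseteq U\subseteq V$. The point to verify here, and the step I expect to be the chief obstacle, is that $U$ is a genuine two-sided ideal of the right normaliser $V$, so that the quotient $V/U$ is again a Leibniz algebra and the covering property may legitimately be applied to ideals of $V$; this is exactly where the left--right asymmetry of Leibniz algebras must be handled, and it rests on the precise definition of $\nser^r$ recorded in \cite{LeibEngel}.

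Granting this, suppose for a contradiction that $U\ne V$. Then $V/U$ is a nonzero soluble Leibniz algebra, so $(V/U)^2\ne V/U$ and $V/U$ has an ideal $M/U$ of codimension one. Consequently $M\id V$, $U\subseteq M$, and $V/M$ is one-dimensional, whence $V/M\in\f{X}$ by the first step. Finally I would apply the covering property of $H$ to the subalgebra $V$ (which contains $H$) together with the ideal $M$ satisfying $V/M\in\f{X}$: this forces $H+M=V$. But $H\subseteq U\subseteq M$, so $H+M=M\subsetneq V$, a contradiction. Hence $V=U$, that is, $\nser^r_L(U)=U$.
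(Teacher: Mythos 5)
Your first step---that the one-dimensional algebra lies in $\f{X}$, and that the covering property applied to a codimension-one ideal of a proper extension of $U$ yields a contradiction---is sound, and it is essentially the paper's own opening move; but it only proves $\nser_L(U)=U$ for the full two-sided normaliser. The step you yourself flag as the chief obstacle is a genuine gap, and it is where all the remaining content of the lemma lives. The right normaliser $V=\nser^r_L(U)=\{x\in L\mid Ux\subseteq U\}$ is a one-sided object: the condition gives $UV\subseteq U$ but says nothing about $VU$, so $U$ is not a two-sided ideal of $V$; worse, $V$ need not even be a subalgebra, since for $x,y\in V$ and $u\in U$ one has $u(xy)=(ux)y+x(uy)$, and the term $x(uy)$ cannot be returned to $U$ without knowing $xU\subseteq U$. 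Consequently $V/U$ is not a Leibniz algebra and the covering property cannot be invoked for an ``ideal $M$ of $V$.'' As written, your argument establishes only the weaker statement $\nser_L(U)=U$.

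The paper closes exactly this gap by a separate induction on $\dim(L)$, treating $\nser^r_L(U)$ element by element rather than as an algebra. Choose a minimal ideal $A$ of $L$. If $U+A<L$, then any $x\in\nser^r_L(U)$ maps into $\nser^r_{L/A}(U+A/A)=U+A/A$ by induction, hence $x\in U+A$, and then $x\in U$ by induction applied inside $U+A$. If $U+A=L$, then $U$ complements $A$, so $A$ is an irreducible $U$-module and therefore symmetric or antisymmetric by Lemma \ref{irred}; in either case a direct computation in $L=U\oplus A$ gives $\nser^r_L(U)=\nser_L(U)=U$, where the last equality is your (and the paper's) covering-subalgebra argument. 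Some such reduction is unavoidable: the clean quotient argument simply does not apply to the right normaliser.
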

\begin{proof}  I prove first that $\nser_L(U)=U$.  Now $N = \nser_L(U)$ is a subalgebra of $L$ and $U \ideq N$.  If $N \ne U$, there exists a subalgebra $K$ of $N$ such that $K/U$ is a $1$-dimensional algebra.  As $K/U \in \f{X}$, $H+U = K$.  But $H \subseteq U$, so $K \subseteq U$ contrary to the choice of $K$.

I now use induction over $\dim(L)$.  The result holds trivially if $\dim(L)=1$.  Let $A$ be a minimal ideal of $L$.  Suppose $U+A < L$.  If $x \in \nser^r_L(U)$, then $x+A \in \nser^r_{L/A}(U+A/A) = U+A/A$ and $x \in U+A$.  But $\nser^r_{U+A}(U) = U$, so $x \in U$.  Hence we may suppose that $U+A = L$ and $U$ complements $A$ in $L$.  As $A$ is irreducible as $U$-module, it is either symmetric or antisymmetric.  As $U$-module, $L = U \oplus A$.  In either case, we have $\nser^r_L(U) = \nser_L(U) = U$. 
\end{proof}

\begin{lemma}\label{primcov} Let $\f{X}$ be a  homomorph and let $L$ be a primitive algebra
not in $\f{X}$ but with $L/\soc(L) \in \f{X}$.  Then $\Cov_\f{X}(L) = \Proj_\f{X}(L)$ and is the set of all
complements to $\soc(L)$ in $L$. \qed\end{lemma}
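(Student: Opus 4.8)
Write $C=\soc(L)$. The plan is to prove the single chain of inclusions
$$\{\text{complements to } C\} \subseteq \Cov_\f{X}(L) \subseteq \Proj_\f{X}(L) \subseteq \{\text{complements to } C\}$$
and then read off both assertions at once, since equality throughout forces $\Cov_\f{X}(L) = \Proj_\f{X}(L)$ and identifies this common set with the complements to $C$. Two of the three inclusions are free. The rightmost is Lemma \ref{Hcomp} applied with the minimal ideal $A=C$: as $L/C \in \f{X}$ and $L \notin \f{X}$, every $\f{X}$-projector complements $C$. The middle one is the general observation made above that a covering subalgebra of $L$ is in particular an $\f{X}$-projector of $L$. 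So the whole content lies in the leftmost inclusion: I must show that an arbitrary complement $U$ to $C$ is an $\f{X}$-covering subalgebra.

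The verification rests on two facts about the primitive algebra $L$. First, $C$ is the unique minimal ideal of $L$, so every nonzero ideal of $L$ contains $C$. Second --- and this is the one point needing real care --- a complement $U$ to $C$ is a maximal subalgebra of $L$, and $U \cong L/C$ lies in $\f{X}$. I would obtain maximality by taking any subalgebra $W$ with $U \subseteq W \subseteq L$ and applying the modular law to get $W = U + (W \cap C)$; since $U \subseteq W$ and $C \ideq L$, the intersection $W \cap C$ is stable under both the left and the right action of $U$, and because $C$ is abelian it is then an ideal of $L = U + C$. Minimality of $C$ forces $W \cap C \in \{0, C\}$, that is, $W \in \{U, L\}$. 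Establishing this $U$-irreducibility of $C$ is the main obstacle; everything afterwards is bookkeeping with the ideal lattice.

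Granting these facts, I would verify the covering property directly. Let $U \subseteq V \subseteq L$ be a subalgebra and let $K \ideq V$ with $V/K \in \f{X}$; by maximality $V = U$ or $V = L$. If $V = U$ the required identity $U + K = V$ is automatic. If $V = L$, then $K$ is an ideal of $L$ with $L/K \in \f{X}$, so $K \ne 0$ because $L \notin \f{X}$; by the first fact $K \supseteq C$, whence $U + K \supseteq U + C = L$. Thus $U \in \Cov_\f{X}(L)$, which completes the leftmost inclusion and hence the whole chain, proving the lemma.
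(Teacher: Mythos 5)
Your proof is correct, and it is essentially the standard argument that the paper omits (the result is quoted from the Lie algebra theory, where the proof is exactly this): complements to the self-centralizing minimal ideal $\soc(L)$ are maximal subalgebras isomorphic to $L/\soc(L)\in\f{X}$, every nonzero ideal of the primitive algebra contains $\soc(L)$, and the chain of inclusions closes via Lemma \ref{Hcomp} and the general fact that covering subalgebras are projectors. No gaps.
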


Let $A$ be an abelian ideal of $L$ and let $\lambda_a\co L \to L$ be the left multiplication map $\lambda_a(x) = ax$.  Then the map $\alpha_a = 1 + \lambda_a \co L \to L$ is an automorphism of $L$.

\begin{lemma} \label{conj} Let $\f{X}$ be a homomorph. 
Let $A$ be an abelian  ideal of $L$. Suppose $L/A \in \f{X}$ and that $U_1, U_2 \in \Cov_\f{X}(L)$.  Then there exists $a \in A$ such that $\alpha_a(U_1) = U_2$. \qed\end{lemma}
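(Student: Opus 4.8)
The plan is to prove the conjugacy of covering subalgebras by induction on $\dim(L)$, reducing to the case where $A$ is a minimal ideal and then using the automorphisms $\alpha_a$ to move one complement onto the other. The base case $\dim(L)=1$ is trivial since then $A=0$ and $U_1=U_2=L$.

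First I would set up the inductive reduction. Choose a minimal ideal $B$ of $L$ contained in $A$ (possible since $A$ is a nonzero ideal, and we may assume $A \ne 0$, else there is nothing to prove). In the quotient $L/B$, the image $A/B$ is an abelian ideal with $(L/B)/(A/B) \cong L/A \in \f{X}$, and the images $\overline{U_1}, \overline{U_2}$ are covering subalgebras of $L/B$ by the standard projection property of covering subalgebras. By the inductive hypothesis there is $a \in A$ with $\alpha_{\bar a}(\overline{U_1}) = \overline{U_2}$ in $L/B$; note $\alpha_a$ induces $\alpha_{\bar a}$ on $L/B$ because $B$ is an ideal and $\alpha_a$ is an automorphism fixing $A$ setwise. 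Replacing $U_1$ by $\alpha_a(U_1)$, which is again a covering subalgebra (the image of a covering subalgebra under an automorphism is a covering subalgebra), I may assume that $U_1 + B = U_2 + B$, call this common subalgebra $V$, with $U_1, U_2$ both covering subalgebras of $V$.

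Now the problem is reduced to the situation where $B$ is a \emph{minimal} ideal of $L$ contained in the abelian ideal $A$, and $U_1, U_2$ are two covering subalgebras with $U_1 + B = U_2 + B = V$. Here I would distinguish two cases. If $B \subseteq V' \cap U_i$ for both, or more precisely if $U_i \supseteq B$, then $U_1 + B = U_1$ forces $U_1 = V = U_2$ and we are done with $a=0$. Otherwise $U_i \cap B$ is a proper $U_i$-submodule of $B$; since $B$ is minimal (hence $U_i$-irreducible, as $U_i + B = V$ acts and $B$ is a chief factor), $U_i \cap B = 0$, so each $U_i$ is a complement to $B$ in $V$. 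By Lemma \ref{primcov} applied within $V$ (or directly, since $V/B \cong U_i \in \f{X}$ makes $V$ primitive-like over the minimal ideal $B$), both $U_1, U_2$ are complements to $B$. The key computation is then to find $b \in B$ with $\alpha_b(U_1) = U_2$: writing $U_1 = \{u + \psi(u) \mid u \in U_2\}$ for a suitable linear $\psi \co U_2 \to B$ measuring the difference of the two complements, the cocycle condition forces $\psi$ to be an inner derivation into $B$, i.e. $\psi(u) = bu$ for some $b \in B$, which is exactly the statement that $\alpha_b$ carries one complement to the other.

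The main obstacle is this last step: establishing that the difference $\psi$ between two complements of the minimal abelian ideal $B$ is realized by left multiplication $\lambda_b$ for some $b \in B$. This is the Leibniz analogue of the classical cohomological conjugacy argument (the relevant $H^1$ computation), and one must check it works for \emph{both} the symmetric and antisymmetric cases for $B$ as a $U_i$-module, which is where Lemma \ref{irred} and the Leibniz identity are needed to ensure $\alpha_b = 1 + \lambda_b$ is genuinely an automorphism and that it adjusts the complement correctly. Care is required because, unlike the Lie case, the right action on $B$ may be trivial, so the verification that $\psi(u) = bu$ solves the complement-matching equation must be done in the Leibniz bimodule setting rather than simply quoting a Lie cohomology vanishing theorem.
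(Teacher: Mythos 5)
The paper gives no proof of Lemma \ref{conj}: the \qedsymbol\ after the statement signals that the argument is the same as for Lie algebras (Barnes and Gastineau-Hills \cite{BGH}), so your proposal has to be measured against that classical argument. Your inductive reduction to a minimal ideal $B \subseteq A$ and the alignment $U_1 + B = U_2 + B = V$ via the quotient $L/B$ is exactly right (and composing the conjugating maps causes no trouble, since $\alpha_b\alpha_a = \alpha_{a+b}$ when $a,b$ lie in the abelian ideal $A$). But the final step is where the lemma actually lives, and there you have a genuine gap: you assert that ``the cocycle condition forces $\psi$ to be an inner derivation into $B$,'' i.e.\ that the relevant $H^1$ vanishes. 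That is false for two arbitrary complements of a minimal abelian ideal: complements are classified by $Z^1$ and their $\alpha_a$-orbits by $H^1$, which is generally nonzero (already $L = \langle x,b\rangle$ abelian with $B = \langle b\rangle$ has the non-conjugate complements $\langle x\rangle$ and $\langle x+b\rangle$, since every $\alpha_a$ is the identity there). What rescues the statement is the hypothesis that $U_1, U_2$ are \emph{covering subalgebras}, which your cocycle computation never uses; in the toy example above $\langle x\rangle$ fails to be a covering subalgebra precisely because $L/\langle x\rangle \in \f{X}$ whenever $L/\langle b\rangle$ is. The classical proof proceeds by (i) applying the induction hypothesis to $V$ when $V < L$ (legitimate because $V/B \cong U_1/(U_1\cap B) \in \f{X}$ and a covering subalgebra of $L$ is a covering subalgebra of every intermediate subalgebra), so that the only irreducible case is $L = U_i + B$ with $B$ a minimal ideal, $L/B \in \f{X}$, $L \notin \f{X}$; and then (ii) exploiting that configuration --- via the self-centralising socle of the associated primitive quotient, or equivalently the cohomology-vanishing theorem for irreducible modules on which the nil radical acts nontrivially --- to get conjugacy. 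Neither ingredient appears in your sketch.

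A secondary slip: when $V < L$ you claim $B$ is ``$U_i$-irreducible, as \dots\ $B$ is a chief factor.'' But $B$ is a chief factor of $L$, not of $V$; a minimal ideal of $L$ may be reducible as a $V$-module, so your dichotomy ($U_i \supseteq B$ versus $U_i \cap B = 0$) is not exhaustive inside $V$. This is harmless if you instead invoke the induction hypothesis on $V$ as above, but as written it is another place where the argument does not close. Your closing paragraph rightly flags the symmetric/antisymmetric dichotomy as a Leibniz-specific point of care, but the deeper issue is that the step you call ``the relevant $H^1$ computation'' is not a computation at all without bringing the covering-subalgebra hypothesis to bear.
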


\begin{lemma}\label{transp} Let $\f{X}$ be a homomorph.  Let $K$ be an ideal of $L$. 
Suppose that $V/K \in \Proj_\f{X}(L/K)$ and $U \in \Proj_\f{X}(V)$.  Then $U \in \Proj_\f{X}(L)$. \qed\end{lemma}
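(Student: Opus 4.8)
The plan is to reduce the assertion to a single maximality statement and to use that projectors pass to quotients. By definition, $U \in \Proj_\f{X}(L)$ means that $(U+M)/M$ is $\f{X}$-maximal in $L/M$ for every ideal $M$ of $L$. I would first record the routine fact that if $W \in \Proj_\f{X}(N)$ and $I \ideq N$, then $(W+I)/I \in \Proj_\f{X}(N/I)$: every ideal of $N/I$ has the form $J/I$ with $I \subseteq J \ideq N$, and the maximality required in $(N/I)/(J/I) \cong N/J$ is exactly the maximality of $(W+J)/J$ supplied by $W \in \Proj_\f{X}(N)$. Applying this to $V/K \in \Proj_\f{X}(L/K)$ with the ideal $(K+M)/K$, and to $U \in \Proj_\f{X}(V)$ with the ideal $V \cap M$, shows that the two hypotheses survive passage to the quotient $L/M$: the image of $V/K$ is an $\f{X}$-projector of the relevant quotient of $L/M$, and the image of $U$ is an $\f{X}$-projector of $(V+M)/M$. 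Hence it suffices to establish the case $M=0$, that $U$ is $\f{X}$-maximal in $L$, and then to apply that conclusion inside each $L/M$.

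For the maximality statement, the key first step is to show that $U + K = V$. Since $V/K \in \Proj_\f{X}(L/K)$, taking the zero ideal gives $V/K \in \f{X}$. On the other hand $U \in \Proj_\f{X}(V)$ and $K \ideq V$, so $(U+K)/K$ is $\f{X}$-maximal in $V/K$; but an $\f{X}$-maximal subalgebra of an algebra that already lies in $\f{X}$ is the whole algebra, whence $(U+K)/K = V/K$, that is, $U + K = V$.

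Now suppose $U \subseteq W$ with $W$ a subalgebra of $L$ lying in $\f{X}$; I must show $W = U$. Then $W + K \supseteq U + K = V$, so $(W+K)/K \supseteq V/K$. As $\f{X}$ is a homomorph, $(W+K)/K \cong W/(W \cap K) \in \f{X}$, while $V/K$ is $\f{X}$-maximal in $L/K$, and maximality forces $(W+K)/K = V/K$, i.e.\ $W + K = V$; in particular $W \subseteq V$. Thus $U \subseteq W \subseteq V$ with $W \in \f{X}$, and since $U$ is $\f{X}$-maximal in $V$ (again taking the zero ideal in $U \in \Proj_\f{X}(V)$), we conclude $W = U$. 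So $U$ is $\f{X}$-maximal in $L$, and by the reduction above $U \in \Proj_\f{X}(L)$.

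The crux is the interplay in the second step: the fact that $V/K$ is not merely $\f{X}$-maximal but actually lies in $\f{X}$ is what collapses $(U+K)/K$ onto all of $V/K$ and thereby traps every competing $\f{X}$-subalgebra $W$ inside $V$, where the projector property of $U$ finishes the job. I expect the only real care needed is the bookkeeping with quotients, checking throughout that the argument invokes only $\quot$-closure of $\f{X}$ and never closure under subalgebras, which is indeed all that the steps above use.
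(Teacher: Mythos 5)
Your proof is correct and is essentially the standard argument that the paper omits (the \qedsymbol{} after the statement indicates the proof is the same as in the Lie algebra case): reduce to showing $\f{X}$-maximality in $L$ by observing that both hypotheses pass to quotients, deduce $U+K=V$ from $V/K\in\f{X}$, and trap any $\f{X}$-subalgebra $W\supseteq U$ inside $V$ via the maximality of $V/K$ in $L/K$. No gaps; the only properties used are $\quot$-closure and the isomorphism theorems, exactly as intended.
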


\begin{lemma}\label{transc} Let $\f{X}$ be a  homomorph.  Let $K$ be an ideal of $L$. 
Suppose that $V/K \in \Cov_\f{X}(L/K)$ and $U \in \Cov_\f{X}(V)$.  Then $U \in \Cov_\f{X}(L)$.\qed\end{lemma}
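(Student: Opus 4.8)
The plan is to reduce the lemma to the \emph{pure transitivity} statement ``$U\in\Cov_\f{X}(V)$ and $V\in\Cov_\f{X}(L)$ imply $U\in\Cov_\f{X}(L)$'' and then to prove that by induction on $\dim L$. Throughout I use two immediate consequences of the definition. \emph{Persistence}: if $H\in\Cov_\f{X}(L)$ and $H\subseteq W\subseteq L$, then $H\in\Cov_\f{X}(W)$, since the defining condition for $W$ is a special case of that for $L$. \emph{Image}: if $H\in\Cov_\f{X}(L)$ and $K\ideq L$, then $(H+K)/K\in\Cov_\f{X}(L/K)$; this is proved by lifting a test pair $\bar W=W/K$, $\bar J=J/K$ with $K\subseteq J\ideq W$ to $W$, applying persistence to get $H+J=W$, and using $\quot$-closure of $\f{X}$.

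First I show that $V/K\in\Cov_\f{X}(L/K)$ already forces $V\in\Cov_\f{X}(L)$. Let $V\subseteq W\subseteq L$ and $J\ideq W$ with $W/J\in\f{X}$. Since $K\subseteq V\subseteq W$ we have $K\ideq W$, so $J+K\ideq W$ and $W/(J+K)$ is a quotient of $W/J$, hence in $\f{X}$. Applying the covering property of $V/K$ to $V/K\subseteq W/K$ gives $(V+J+K)/K=W/K$, and as $K\subseteq V$ this reads $V+J=W$. Thus $V\in\Cov_\f{X}(L)$, and it remains to prove transitivity.

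For transitivity I induct on $\dim L$; the case $V=L$ is trivial, so suppose $V<L$ and choose a minimal ideal $A$ of $L$. The image property gives $(V+A)/A\in\Cov_\f{X}(L/A)$, and passing $U\in\Cov_\f{X}(V)$ to the quotient $V/(V\cap A)\cong(V+A)/A$ gives $(U+A)/A\in\Cov_\f{X}((V+A)/A)$. The inductive hypothesis in $L/A$ yields $(U+A)/A\in\Cov_\f{X}(L/A)$, whereupon the first step (with $K=A$) gives $P:=U+A\in\Cov_\f{X}(L)$. Now take a test pair $U\subseteq W\subseteq L$, $J\ideq W$, $W/J\in\f{X}$. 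Since $A\ideq L$, the sum $J+A$ is an ideal of the subalgebra $W+A$, and the modular law (using $J\subseteq W$) identifies $(W+A)/(J+A)$ with $W/(J+(W\cap A))$, a quotient of $W/J$, hence in $\f{X}$. As $P\subseteq W+A$, persistence and the covering property of $P$ give $P+(J+A)=W+A$, that is $U+J+A=W+A$; intersecting with $W$ and using the modular law once more gives $W=(U+J)+(W\cap A)$.

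The remaining point, and the main obstacle, is to \emph{absorb} the abelian ideal $D:=W\cap A$ of $W$, i.e.\ to show $D\subseteq U+J$ and thereby $U+J=W$. This is exactly the problem of descending the covering property from $U+A$ back to $U$ across the minimal abelian ideal $A$, and here I expect to mimic the Lie-algebra argument of \cite{BGH}: if $D\subseteq J$ there is nothing to prove, and otherwise one works modulo $J$, where $(D+J)/J$ is an abelian ideal of $W/J\in\f{X}$ and $U+A$ and $U$ give two covering subalgebras differing only within $A$; the conjugacy of covering subalgebras under the automorphisms $\alpha_a=1+\lambda_a$ (Lemma \ref{conj}), together with the complement Lemma \ref{Hcomp}, then identifies the relevant complements and forces $U+J=W$. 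This transport of a covering subalgebra across a minimal abelian ideal is where solubility is essential; everything else is formal bookkeeping with the modular law and the persistence and image properties.
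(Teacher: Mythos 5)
Your reduction discards the one hypothesis that makes this lemma easy, and the step you defer to ``the Lie-algebra argument'' is a genuine gap, not bookkeeping. A covering subalgebra is required to lie in $\f{X}$ (implicit in the paper's definition, but essential: otherwise $L$ itself would always be a covering subalgebra of $L$, contradicting Lemma \ref{primcov}). Hence $V/K\in\Cov_\f{X}(L/K)$ gives $V/K\in\f{X}$, and applying the covering property of $U$ in $V$ to the pair $(V,K)$ yields $V=U+K$ immediately. Your claim that ``$V/K\in\Cov_\f{X}(L/K)$ forces $V\in\Cov_\f{X}(L)$'' is false as stated, since $V$ need not lie in $\f{X}$; and the ``pure transitivity'' statement, read with the membership requirement, is vacuous because $V\in\f{X}$ together with $U\in\Cov_\f{X}(V)$ forces $U=V$. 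What your induction actually requires is the membership-free version, and there it stalls exactly at the absorption of $D=W\cap A$: you must show $D\subseteq U+J$, but the only covering property you hold for $U$ lives inside $V$, while $W$ (hence $D$) need not be comparable with $V$. Lemma \ref{conj} needs two covering subalgebras of one and the same algebra over an abelian ideal with quotient in $\f{X}$; no such pair is available here ($U$ is not known to cover in $U+D$, and $(U+J)/J$ being a covering subalgebra of $W/J$ is precisely what is to be proved). So the sketched appeal to conjugacy does not close the argument.

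The intended proof is direct and needs no induction, conjugacy, minimal ideals or solubility. Starting from $V=U+K$: given $U\le W\le L$ and $J\ideq W$ with $W/J\in\f{X}$, the subalgebra $W+K$ contains $U+K=V$, and $J+K\ideq W+K$ with $(W+K)/(J+K)\cong W/(J+(W\cap K))\in\quot\f{X}=\f{X}$; the covering property of $V/K$ in $L/K$ then gives $V+J=W+K$. Intersecting with $W$ and using the modular law (valid since $J\subseteq W$) gives $W=(W\cap V)+J$. Now $W\cap V$ is a subalgebra of $V$ containing $U$, $J\cap V\ideq W\cap V$, and $(W\cap V)/(J\cap V)\cong W/J\in\f{X}$, so the covering property of $U$ in $V$ gives $U+(J\cap V)=W\cap V$, whence $U+J=W$. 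Your persistence and image observations and your modular-law manipulations are all sound; the missing idea is to use $V=U+K$ so that every test subalgebra $W$ can be enlarged to $W+K\supseteq V$ and then cut back down into $V$, where the hypothesis on $U$ applies.
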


\begin{lemma} \label{H1} Let $\f{X} \ne \{0\}$ be a homomorph. Let $A$ be a minimal ideal of $L$. Suppose $L/A \in \f{X}$, $L \not\in \f{X}$ and $\Cov_\f{X}(L) \ne \emptyset$.  Then $\Cov_\f{X}(L)$ is the set of all complements to $A$ in $L$. \qed \end{lemma}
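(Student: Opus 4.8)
The plan is to prove the two inclusions separately. For the inclusion $\Cov_\f{X}(L) \subseteq \{\text{complements to } A\}$, I would note that an $\f{X}$-covering subalgebra $H$ of $L$ is, in particular, an $\f{X}$-projector of $L$ (a covering subalgebra is a projector of every subalgebra containing it, and $L$ contains $H$). Since $\f{X}$ is a homomorph, $A$ is a minimal ideal, $L/A \in \f{X}$ and $L \notin \f{X}$, Lemma \ref{Hcomp} applies directly and shows that $H$ complements $A$.

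For the reverse inclusion I would take an arbitrary complement $U$ to $A$ and show $U \in \Cov_\f{X}(L)$. First I would record that $U$ is a maximal subalgebra: as $A$ is a minimal ideal of the soluble algebra $L$ it is abelian, so $A\cdot A = 0$ and the module structure on $A$ factors through $L/A \cong U$; hence $A$ is irreducible as a $U$-module. Any subalgebra $V$ with $U \subseteq V \subseteq L$ satisfies $V = U \oplus (V \cap A)$ with $V \cap A$ a $U$-submodule of $A$, forcing $V = U$ or $V = L$. Because $U \cong L/A \in \f{X}$ and the only subalgebras containing $U$ are $U$ and $L$, the covering condition for $U$ collapses to a single requirement: $U + K = L$ for every ideal $K$ of $L$ with $L/K \in \f{X}$; equivalently, no nonzero ideal $K \subseteq U$ has $L/K \in \f{X}$.

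To verify this I would pass to a primitive quotient. Put $C = \core_L(U)$; then $L/C$ is primitive with socle $(A+C)/C \cong A$ complemented by $U/C$, and $(L/C)/\soc(L/C) \cong L/(A+C) \in \f{X}$. Provided $L/C \notin \f{X}$, Lemma \ref{primcov} gives $U/C \in \Cov_\f{X}(L/C)$; since $U \in \f{X}$ gives $U \in \Cov_\f{X}(U)$, Lemma \ref{transc} (with ideal $C$ and intermediate subalgebra $U$) then yields $U \in \Cov_\f{X}(L)$, as required.

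The whole argument thus rests on the single point $L/\core_L(U) \notin \f{X}$, and this is where I expect the real difficulty to lie. For the given covering subalgebra $H$ it is immediate: if $L/\core_L(H) \in \f{X}$, then $\core_L(H)$ would be an ideal inside $H$ violating the covering property, so $L/\core_L(H) \notin \f{X}$. Transferring this from $H$ to an arbitrary complement $U$ is the crux. I would attempt it via the conjugacy machinery: Lemma \ref{conj} shows that the members of $\Cov_\f{X}(L)$ form a single orbit under the automorphisms $\alpha_a$ $(a \in A)$, and if $U$ can be shown to be $\alpha_a$-conjugate to $H$, then $\core_L(U) = \alpha_a(\core_L(H))$ and $L/\core_L(U) \cong L/\core_L(H) \notin \f{X}$. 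The hard part is precisely establishing that \emph{every} complement, not merely every covering subalgebra, is conjugate to $H$. Failing a direct conjugacy argument, I would instead argue by contradiction: a nonzero ideal $K \subseteq U$ with $L/K \in \f{X}$ projects along $A$ onto a subspace $\hat K \subseteq H$, and I would check (using that $A$ is abelian and central in $A \oplus K$) that $\hat K$ is in fact an ideal of $L$; one then tries to force $L/\hat K \in \f{X}$, contradicting that $H$ is covering. Lemma \ref{irred}, which controls the action on the minimal ideal $A$, would be the main tool in analysing $\hat K$, and pinning down this last implication is the step I expect to be the genuine obstacle.
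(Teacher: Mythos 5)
Your overall architecture is sound: the first inclusion via Lemma \ref{Hcomp}, the maximality of a complement $U$, the identification $U \cong L/A \in \f{X}$, and the reduction of the covering condition to the single claim that no nonzero ideal $K \subseteq U$ of $L$ has $L/K \in \f{X}$ (equivalently $L/\core_L(U) \notin \f{X}$) are all correct. But that claim is the entire content of the lemma, and you explicitly leave it unproved, so the proposal has a genuine gap. Your first fallback is circular: Lemma \ref{conj} conjugates covering subalgebras to one another, and the conjugacy of \emph{arbitrary} complements is a consequence of this lemma (once every complement is known to be covering), not a tool available beforehand.

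Your second fallback is the right idea, and the step you could not pin down is a uniqueness-of-split-extensions argument. Suppose $K \subseteq U$ is an ideal of $L$ with $L/K \in \f{X}$. Minimality of $A$ gives $K \cap A = 0$, whence $KA \subseteq K \cap A = 0$ and $AK \subseteq A \cap K = 0$. Set $\hat K = (K+A) \cap H$. This is an ideal of $H$ (as $K+A \id L$) annihilated on both sides by $A$ (being contained in $K+A$), hence an ideal of $L = H + A$; moreover $\hat K + A = K + A$ and $\hat K \cap A = 0$. By the modular law $U \cap (K+A) = K$ and $H \cap (K+A) = \hat K$, so $L/K$ is the split extension of $(K+A)/K$ by $U/K \cong L/(K+A)$ and $L/\hat K$ is the split extension of $(K+A)/\hat K$ by $H/\hat K \cong L/(K+A)$. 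Both socles are isomorphic to $A$ as $L$-bimodules and both actions factor through $L/(K+A)$, so the two quotients are isomorphic Leibniz algebras: $L/\hat K \cong L/K \in \f{X}$. Since $\hat K \subseteq H$, the covering property of $H$ forces $L = H + \hat K = H$, contradicting $L \notin \f{X}$. With this inserted, your route through Lemmas \ref{primcov} and \ref{transc} goes through (though it is then redundant: ``no such $K$'' is already the covering condition for the maximal subalgebra $U$). The paper omits its proof as identical to the Lie-algebra case, where this split-extension comparison is the standard argument.
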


\begin{definition} A  homomorph $\f{X}$ is called {\em projective} if, for every soluble  
Leibniz algebra $L$, we have $\Proj_\f{X}(L) \ne \emptyset$.  It is called a {\em Gasch\"utz} class
if, for every soluble Leibniz algebra $L$, $\Cov_\f{X}(L) \ne \emptyset$. \end{definition} 

In the older terminology, what are here called $\f{X}$-covering subalgebras were called
$\f{X}$-projectors, and what are here called Gasch\"utz classes were called saturated homomorphs.

\begin{theorem}\label{GPS} \newcounter{bean} Let $\f{X}$ be a homomorph.  Then the following are equivalent:
\begin{list}{}{\usecounter{bean}}
\item[\rm (a)] $\f{X}$ is a Gasch\"utz class.
\item[\rm (b)] $\f{X}$ is a projective class.
\item[\rm (c\,)] $\f{X}$ is a Schunck class. \qed\end{list}\end{theorem}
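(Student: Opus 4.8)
The plan is to establish the cycle (a) $\Rightarrow$ (b) $\Rightarrow$ (c) $\Rightarrow$ (a), with the transitivity results and Lemma~\ref{primcov} as the main tools. The first implication is immediate: by the remark following the definition of covering subalgebra, any $U \in \Cov_\f{X}(L)$ is an $\f{X}$-projector of every subalgebra containing it, in particular of $L$ itself; so a Gasch\"utz class is projective.

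For (b) $\Rightarrow$ (c) I would first record the inclusion $\f{X} \subseteq \prim\f{X}$, which is automatic for a homomorph since every primitive quotient of an algebra in $\f{X}$ is again a quotient, hence in $\f{X}$. The work lies in the reverse inclusion. Given $L \in \prim\f{X}$, I would take an $\f{X}$-projector $U$ supplied by (b); setting $K = 0$ in the projector definition shows $U$ is $\f{X}$-maximal, so $U \in \f{X}$. I then argue $U = L$. If $U < L$, choose a maximal subalgebra $M \supseteq U$ and put $N = \core_L(M)$, so that $M/N$ is a core-free maximal subalgebra of $L/N$ and $L/N$ is primitive. Since $L \in \prim\f{X}$, the primitive quotient $L/N$ lies in $\f{X}$, whence its only $\f{X}$-maximal subalgebra is itself; as $U$ is a projector, $(U+N)/N$ must then equal $L/N$, i.e. $U + N = L$, contradicting $U + N \subseteq M < L$. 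Hence $U = L \in \f{X}$.

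The substantive step is (c) $\Rightarrow$ (a), which I would prove by induction on $\dim L$. If $L \in \f{X}$ then $L$ is its own covering subalgebra. Otherwise I would choose an ideal $N$ maximal subject to $L/N \notin \f{X}$ (this exists, since $N = 0$ qualifies while $N = L$ does not). Maximality forces every proper quotient of $L/N$ into $\f{X}$; combined with $L/N \notin \f{X} = \prim\f{X}$, the primitive quotient of $L/N$ that fails to lie in $\f{X}$ can only be $L/N$ itself, so $L/N$ is primitive. Writing $\soc(L/N) = D/N$, the proper quotient $(L/N)/(D/N) = L/D$ lies in $\f{X}$, so Lemma~\ref{primcov} furnishes a covering subalgebra $R/N$ of $L/N$ complementing $D/N$; in particular $R < L$. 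By induction $R$ carries a covering subalgebra $W$, and Lemma~\ref{transc}, applied to the ideal $N$, upgrades $W$ to an element of $\Cov_\f{X}(L)$.

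The main obstacle is this existence argument (c) $\Rightarrow$ (a), and within it the structural reduction: I must guarantee that the maximal $N$ with $L/N \notin \f{X}$ yields a \emph{primitive} $L/N$, which is precisely where the $\prim$-closure of a Schunck class is used, first to locate a primitive quotient outside $\f{X}$ and then to force it to be the whole of $L/N$. The companion fact needed in (b) $\Rightarrow$ (c), that $L/\core_L(M)$ is primitive for a maximal subalgebra $M$, is the other point to pin down; once both are secured, Lemmas~\ref{primcov} and~\ref{transc} carry the remaining weight essentially formally.
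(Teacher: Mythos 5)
Your proof is correct and follows the standard cycle (a)$\Rightarrow$(b)$\Rightarrow$(c)$\Rightarrow$(a) that the paper implicitly invokes by omitting the proof (it is the Lie-algebra argument of Barnes--Gastineau-Hills, carried over verbatim using Lemmas~\ref{primcov} and~\ref{transc}). The two points you flag --- that $L/\core_L(M)$ is primitive for a maximal subalgebra $M$ of a soluble algebra, and that a maximal ideal $N$ with $L/N\notin\f{X}$ forces $L/N$ primitive --- go through for Leibniz algebras exactly as for Lie algebras, so nothing further is needed.
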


\begin{lemma}\label{closure} Let $\f{H}$ be a  homomorph.  Suppose $U$ is an $\f{H}$-covering
subalgebra of $L$.    Then $U$ is a  $\prim\f{H}$-covering subalgebra of $L$. \qed  \end{lemma}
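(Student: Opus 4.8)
The plan is to prove Lemma \ref{closure}: if $U$ is an $\fH$-covering subalgebra of $L$, then $U$ is a $\prim\fH$-covering subalgebra of $L$. I would work directly from the definition of covering subalgebra. Recall that $U$ is a $\prim\fH$-covering subalgebra of $L$ means: whenever $U \subseteq V \subseteq L$ and $K \id V$ with $V/K \in \prim\fH$, we have $U + K = V$. So I fix such a $V$ and $K$ and must deduce $U + K = V$, using only the weaker hypothesis that $U$ covers with respect to the (smaller) class $\fH$.

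The key observation is that since $\fH \subseteq \prim\fH$ always holds (a primitive quotient of an algebra in $\fH$ is in $\fH$, as $\fH$ is $\quot$-closed), a $\prim\fH$-covering hypothesis is \emph{stronger} than the $\fH$-covering hypothesis, so the content of the lemma is that the weaker covering property already forces the stronger one. The natural strategy is induction on $\dim(V)$, or equivalently to reduce to the case $V = L$ and then argue by induction on $\dim(L)$. First I would replace $V$ by an arbitrary subalgebra containing $U$; since $U \in \Cov_\fH(L)$ implies $U \in \Cov_\fH(V)$ for every subalgebra $V \supseteq U$, it suffices to prove that $U + K = V$ whenever $K \id V$ and $V/K \in \prim\fH$, and I may as well rename $V$ as $L$ and assume $U$ is an $\fH$-covering subalgebra of $L$ with $L \in \prim\fH$; the goal becomes showing $U = L$, i.e.\ that any $\fH$-covering subalgebra of an algebra in $\prim\fH$ is the whole algebra.

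So the reduced statement is: if $L \in \prim\fH$ and $U \in \Cov_\fH(L)$, then $U = L$. I would prove this by induction on $\dim(L)$. The case $\dim(L) = 0$ is trivial. For the inductive step, take a minimal ideal $A$ of $L$. Then $L/A \in \prim\fH$ (since $\prim\fH$ is $\quot$-closed, being a Schunck class) and $U + A / A \in \Cov_\fH(L/A)$, so by induction $U + A = L$. If $U \ne L$ then $A \not\subseteq U$ and $U$ properly complements $A$; I then want to derive a contradiction with $L \in \prim\fH$. The point is that if $U+A = L$ with $U \cap A$ a proper submodule, one can pass to a primitive quotient of $L$ that fails to lie in $\fH$, contradicting the covering property. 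Concretely, choose a maximal subalgebra $M$ with $U \subseteq M$ and $A \not\subseteq M$; then $\core_L(M)$ has primitive quotient $L/\core_L(M) \in \prim\fH$, which by definition of $\prim$ forces this primitive quotient into $\fH$, and the covering property of $U$ relative to $\fH$ then pushes $U$ up to fill $L/\core_L(M)$, contradicting maximality of $M$ unless $U = L$.

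The main obstacle will be handling the minimal-ideal step cleanly: one must ensure that the primitive quotient through which the contradiction is drawn genuinely lies in $\fH$ rather than merely in $\prim\fH$, and that the $\fH$-covering property (which only gives $U+K = V$ for $\fH$-quotients) is being applied to a legitimately $\fH$ quotient. This is exactly where the definition of $\prim\fH = \{L \mid \quot(L) \cap \fP \subseteq \fH\}$ does the work: every \emph{primitive} quotient of an algebra in $\prim\fH$ lands in $\fH$, so the covering hypothesis applies and forces $U$ to cover each such primitive factor. Assembling these to conclude $U = L$, rather than just $U + A = L$ for a single minimal ideal, is the delicate bookkeeping; once the reduction to primitive quotients is in place, the remaining verification is routine and parallels the corresponding Lie-algebra argument.
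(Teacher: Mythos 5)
Your proposal is correct and is essentially the standard argument that the paper invokes by omitting the proof: if $U+K \ne V$, place $U+K$ inside a maximal subalgebra $M$ of $V$, observe that $V/\core_V(M)$ is a primitive quotient of $V/K \in \prim\f{H}$ and hence lies in $\f{H}$ by the very definition of $\prim$, and then the $\f{H}$-covering property forces $V = U + \core_V(M) \subseteq M$, a contradiction. The induction on $\dim L$ and the minimal-ideal detour are superfluous scaffolding (and the claim there that $U$ ``properly complements $A$'' is unjustified, though unused) --- your final ``concretely'' paragraph already closes the proof on its own.
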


The next lemma is a slightly modified version of Doerk and Hawkes \cite[Lemma 3.14, p.295]{DH}.
\begin{lemma} \label{nilsup} Let $\f{X}$ be a  Schunck class.  Let $N$ be a nilpotent  ideal
of $L$ and  let $U$ be an $\f{X}$-maximal  subalgebra of $L$ such that $L = U + N$.  Then $U \in \Cov_\f{X}(L)$. \end{lemma}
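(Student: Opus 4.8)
The plan is to prove $U \in \Cov_{\f{X}}(L)$ by induction on $\dim L$. If $L \in \f{X}$ then $U = L$, since $U$ is $\f{X}$-maximal, and the conclusion is immediate; so assume $L \notin \f{X}$, whence $N \neq 0$. I would choose a minimal ideal $A$ of $L$ contained in $N$. Since $N$ is nilpotent, the subspace $AN + NA$, which is an ideal of $L$ contained in the minimal ideal $A$, cannot be all of $A$ without contradicting the nilpotency of $N$; hence $AN = NA = 0$, so $A$ is central in $N$. A direct check using $L = U + N$ together with $AN = NA = 0$ then shows that $U \cap A$ is an ideal of $L$, so by minimality either $U \cap A = 0$ or $A \subseteq U$. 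The induction splits according to whether or not $L/A$ lies in $\f{X}$.

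Suppose first that $L/A \notin \f{X}$. I would begin by showing that $(U+A)/A$ is $\f{X}$-maximal in $L/A$. If $(U+A)/A \subseteq W/A$ with $W/A \in \f{X}$ and $U + A \subseteq W \subseteq L$, then by the modular law $W = U + (W \cap N)$, where $W \cap N$ is a nilpotent ideal of $W$ and $U$ remains $\f{X}$-maximal in $W$; for $W$ properly contained in $L$ the induction hypothesis gives $U \in \Cov_{\f{X}}(W)$, and then $A \id W$ with $W/A \in \f{X}$ forces $U + A = W$, while $W = L$ is excluded because $L/A \notin \f{X}$. Since $L/A = (U+A)/A + (N+A)/A$ with $(N+A)/A$ nilpotent, induction applied to $L/A$ yields $(U+A)/A \in \Cov_{\f{X}}(L/A)$. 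Moreover $(U+A)/A \cong U/(U \cap A) \in \f{X}$ while $L/A \notin \f{X}$, so $U + A \neq L$; induction applied to the smaller algebra $U + A$, with its nilpotent ideal $A$, gives $U \in \Cov_{\f{X}}(U+A)$. Lemma \ref{transc}, applied with $V = U+A$ and the ideal $A$, then delivers $U \in \Cov_{\f{X}}(L)$.

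Suppose instead that $L/A \in \f{X}$. By Theorem \ref{GPS}, $\f{X}$ is a Gasch\"utz class, so $\Cov_{\f{X}}(L) \neq \emptyset$, and Lemma \ref{H1} identifies $\Cov_{\f{X}}(L)$ with the set of complements to $A$ in $L$. Thus the conclusion $U \in \Cov_{\f{X}}(L)$ is equivalent to the assertion that $U$ is a complement to $A$, and the whole problem concentrates here. The available leverage is that, by Lemma \ref{conj}, all complements to $A$ are conjugate by the automorphisms $\alpha_a = 1 + \lambda_a$ with $a \in A$, and that, $A$ being central in $N$, each $\alpha_a$ fixes $N$ pointwise and hence permutes among themselves the subalgebras $Q$ with $L = Q + N$. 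Starting from a known complement $H \in \Cov_{\f{X}}(L)$ (which satisfies $L = H + N$ because $A \subseteq N$), the plan is to combine this conjugacy with the dichotomy $U \cap A \in \{0, A\}$ to show that $U$ is itself such a complement, after which $U \in \Cov_{\f{X}}(L)$ by Lemma \ref{H1}.

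The case $L/A \in \f{X}$ is where I expect the real difficulty to lie. The inductive mechanism of the previous paragraph is useless here: when $L/A \in \f{X}$, the $\f{X}$-maximality of $(U+A)/A$ in $L/A$ is \emph{equivalent} to the very equality $U + A = L$ that one is trying to prove, so the circularity cannot be broken by induction and transitivity alone. Overcoming it requires genuinely new input, namely the conjugacy theory of covering subalgebras together with the saturation of $\f{X}$, to rule out that an $\f{X}$-maximal $U$ with $L = U + N$ absorbs the non-Frattini chief factor $A$. The structural feature that makes this transport of complements possible is exactly the centrality of $A$ in $N$: it is what forces $U \cap A$ to be an ideal and what makes the automorphisms $\alpha_a$ act trivially on $N$, preserving the relation $L = Q + N$.
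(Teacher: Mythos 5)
Your Case 1 is sound and is essentially the paper's argument specialised to a minimal ideal. The problem is Case 2, where you have a plan but not a proof, and the plan as described will not close. Lemma \ref{conj} conjugates two \emph{covering} subalgebras of $L$ to one another; it gives you no purchase on a subalgebra $U$ that is merely $\f{X}$-maximal with $L=U+N$, because there is no mechanism forcing $U$ to lie in the $\alpha_a$-orbit of a known complement $H$. The observation that $\alpha_a$ fixes $N$ pointwise and preserves the condition $Q+N=L$ is true but inert: the set of such $Q$ is much larger than the set of complements to $A$, and nothing you have written rules out the subcase $A\subseteq U$. Note also that you cannot escape Case 2 by choosing a better minimal ideal: a Schunck class need not be a formation, and the paper records that for a non-formation homomorph there exist algebras $L\notin\f{X}$ with $L/A\in\f{X}$ for \emph{every} minimal ideal $A$, so Case 2 is genuinely unavoidable and carries the whole weight of the lemma.

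The missing idea is to use the defining property of a Schunck class directly rather than a minimal ideal: since $L\notin\f{X}=\prim\f{X}$, there is an ideal $K$ (not necessarily minimal, not necessarily inside $N$) with $P=L/K$ primitive, $P\notin\f{X}$ and $P/\soc(P)\in\f{X}$. This choice buys exactly what your minimal ideal could not: $(U+K)/K\simeq U/(U\cap K)\in\f{X}$ while $P\notin\f{X}$, so $U+K\ne L$ automatically; moreover $N\not\subseteq K$ (as $L/N\simeq U/(U\cap N)\in\f{X}$), so $(N+K)/K$ is a nonzero nilpotent ideal of $P$ and hence equals $\soc(P)$, and $(U+K)/K$ is a complement to it, hence lies in $\Cov_\f{X}(L/K)$ by Lemma \ref{primcov}. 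Induction applied to the proper subalgebra $U+K$ with its nilpotent ideal $(U+K)\cap N$ (using $U+((U+K)\cap N)=U+K$ by the modular law) gives $U\in\Cov_\f{X}(U+K)$, and Lemma \ref{transc} finishes. Your Case 1 is this argument in the special situation where a suitable $K$ happens to be a minimal ideal inside $N$; to repair the proof, replace the dichotomy on $L/A$ by this single choice of primitive quotient.
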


\begin{proof}  We use induction on the dimension of $L$.  If $L \in \f{X}$, the result holds, so suppose
$L \not\in \f{X}$.  Then there exists a  ideal $K$ of $L$ such that $P=L/K$ is primitive, not in
$\f{X}$ but with $P/\soc(P) \in \f{X}$.  Then $N \not\subseteq K$ since $L/N \simeq U/(U\cap N) \in
\f{X}$.  Hence $N+K/K$ is a non-zero nilpotent ideal of $P$ and so $N+K/K = \soc(P)$.  But $U+K/K$ is a maximal  subalgebra of $P$ complementing $N+K/K$, so $U+K/K \in \Cov_\f{X}(L/K)$ by Lemma \ref{primcov}.  Now $(U+K) \cap N$ is a nilpotent  ideal of $U+K$. Also $U+((U+K) \cap N) = U+K$ by the modular law for subspaces.  As $U+K \ne L$, by induction, we have that $U \in \Cov_\f{X}(U+K)$.  By Lemma \ref{transc}, $U \in \Cov_\f{X}(L)$. \end{proof}

\begin{theorem}\label{projcov} Let $\f{X}$ be a  Schunck class of soluble Leibniz algebras.  Suppose $U \in \Proj_\f{X}(L)$. Then $U \in \Cov_\f{X}(L)$. \qed\end{theorem}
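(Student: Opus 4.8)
The plan is to induct on $\dim L$, peeling off a minimal ideal and reassembling the two pieces by means of the transitivity of covering subalgebras (Lemma \ref{transc}). For the base case, suppose $L \in \f{X}$ (this includes $L=0$). Applying the projector property of $U$ to the ideal $0$ shows that $U$ is $\f{X}$-maximal in $L$; since $L \in \f{X}$ this forces $U=L$, and $L$ plainly covers itself, so $U \in \Cov_\f{X}(L)$.

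For the inductive step I would assume $L \notin \f{X}$ and fix a minimal ideal $A$ of $L$. Because $L$ is soluble, Corollary \ref{powers} gives $A^2 \ideq L$, and solubility forces $A^2 \ne A$; minimality of $A$ then yields $A^2=0$, so $A$ is abelian, in particular nilpotent. The first substantive point is that the projector property survives passage to $L/A$: the ideals of $L/A$ are exactly the $K/A$ with $A \subseteq K \ideq L$, and under the isomorphism $(L/A)/(K/A) \cong L/K$ the $\f{X}$-maximality of $(U+K)/K$ in $L/K$ (which holds since $U \in \Proj_\f{X}(L)$) becomes $\f{X}$-maximality of $(U+A)/A$ modulo $K/A$. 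Hence $(U+A)/A \in \Proj_\f{X}(L/A)$, and by the induction hypothesis $(U+A)/A \in \Cov_\f{X}(L/A)$.

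Taking $V=U+A$ and the ideal $A$ in Lemma \ref{transc}, the theorem now reduces to showing $U \in \Cov_\f{X}(U+A)$. If $A \subseteq U$ this is immediate, since then $U+A=U$ and a subalgebra always covers itself. The remaining case $A \not\subseteq U$ is where the work lies. Here I would use the fact that, while projectors need not pass to intermediate subalgebras, $\f{X}$-maximality does: as $U$ is $\f{X}$-maximal in $L$ and $U \subseteq U+A \subseteq L$, $U$ is $\f{X}$-maximal in $U+A$. Since $U+A$ is tautologically the sum of $U$ and the nilpotent ideal $A$, Lemma \ref{nilsup} applies verbatim and delivers $U \in \Cov_\f{X}(U+A)$, whence Lemma \ref{transc} gives $U \in \Cov_\f{X}(L)$ and closes the induction.

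I expect the crux --- and the main opportunity for error --- to be exactly the case $A \not\subseteq U$. The naive route is to claim $U \in \Proj_\f{X}(U+A)$ and invoke the induction hypothesis on the smaller algebra $U+A$; but this is not legitimate, precisely because projectors (unlike covering subalgebras) are not inherited by intermediate subalgebras. The correct manoeuvre is to retreat to the weaker, genuinely inherited property of $\f{X}$-maximality and combine it with the nilpotence of the abelian minimal ideal $A$ so as to invoke Lemma \ref{nilsup} directly; everything else is bookkeeping with the quotient $L/A$ and the transitivity lemma.
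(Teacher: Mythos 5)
Your proof is correct, and it is essentially the intended one: the paper omits the argument (the \qedsymbol{} after the statement signals that it is the same as for Lie algebras), and that standard argument is exactly your induction on $\dim L$ — pass to $L/A$ for a minimal ideal $A$, use that projectors descend to quotients, and close the gap between $U$ and $U+A$ via the nilpotent-supplement Lemma \ref{nilsup} followed by transitivity (Lemma \ref{transc}). Indeed, Lemma \ref{nilsup} is placed immediately before the theorem precisely for this purpose, and your identification of the case $A \not\subseteq U$ (where one must retreat from the projector property to mere $\f{X}$-maximality) as the crux is accurate.
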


\begin{lemma}\label{schunck} Let $\f{H}$ be a homomorph.  A necessary and sufficient condition for $\f{H}$ to be a  Schunck class is that $L \not\in \f{H}$, $A$ a minimal  ideal of $L$ and $L/A \in \f{H}$ implies $\Proj_\f{H}(L) \ne \emptyset$. \end{lemma}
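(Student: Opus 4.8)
The plan is to prove the two implications separately, leaning on the equivalence of Schunck classes with projective classes from Theorem~\ref{GPS} and on the transitivity of projectors from Lemma~\ref{transp}. This is the Leibniz analogue of the classical minimal-ideal criterion for Schunck classes.

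\emph{Necessity} is immediate. If $\f{H}$ is a Schunck class, then by Theorem~\ref{GPS} it is projective, so $\Proj_{\f{H}}(L) \ne \emptyset$ for every soluble Leibniz algebra $L$. In particular this holds for any $L$ satisfying $L \notin \f{H}$, $A$ a minimal ideal of $L$ and $L/A \in \f{H}$, which is exactly the asserted condition; no further argument is needed.

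\emph{Sufficiency} is where the work lies. Assuming the condition, I would show that $\f{H}$ is projective and then invoke Theorem~\ref{GPS} to conclude it is a Schunck class. The argument is an induction on $\dim L$ establishing $\Proj_{\f{H}}(L) \ne \emptyset$ for every soluble $L$. If $L \in \f{H}$ then $L$ is itself an $\f{H}$-projector of $L$, so suppose $L \notin \f{H}$; since $\f{H}$ is a non-empty homomorph we have $0 \in \f{H}$, hence $L \ne 0$ and $L$ possesses a minimal ideal $A$. By the inductive hypothesis applied to $L/A$ there is $V/A \in \Proj_{\f{H}}(L/A)$. If $V \ne L$, then $\dim V < \dim L$ and induction supplies $U \in \Proj_{\f{H}}(V)$; Lemma~\ref{transp}, with $K = A$, then yields $U \in \Proj_{\f{H}}(L)$. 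If instead $V = L$, then $L/A$ is its own $\f{H}$-projector, and applying the projector condition to the zero ideal of $L/A$ forces $L/A \in \f{H}$. Thus $L \notin \f{H}$, $A$ is a minimal ideal of $L$ and $L/A \in \f{H}$, so the hypothesised condition directly provides $\Proj_{\f{H}}(L) \ne \emptyset$, completing the induction.

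The only delicate point is the case $V = L$: here the lifted projector of $L/A$ exhausts the quotient, and one must recognise that this is precisely the configuration the hypothesis is designed to handle, so that invoking the condition closes the induction with no additional computation. Every other case is dispatched by transitivity (Lemma~\ref{transp}), so I expect no serious obstacle. The real content of the lemma is the packaging of the full projectivity statement into this single minimal-ideal criterion, and that packaging is accomplished by the inductive reduction together with Theorem~\ref{GPS}.
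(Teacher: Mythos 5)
Your proof is correct and follows essentially the same route as the paper's: necessity via Theorem~\ref{GPS}, and sufficiency by induction on $\dim L$, lifting a projector from $L/A$ and splitting into the cases $V \subset L$ (handled by transitivity of projectors) and $V = L$ (handled by the hypothesis). The only difference is cosmetic — you cite Lemma~\ref{transp}, which is indeed the transitivity statement for projectors, whereas the paper's text cites Lemma~\ref{transc} (the covering-subalgebra version), apparently a slip; your citation is the right one.
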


\begin{proof}  By Theorem \ref{GPS}, the condition is necessary.  Suppose $\f{H}$ satisfies the condition.  We use induction over $\dim(L)$ to prove for all $L$ that $\Proj_\f{H}(L) \ne \emptyset$.  Let $A$ be a minimal ideal of $L$.  By induction, there exists a subalgebra $U \supseteq A$ such that $U/A \in \Proj_\f{H}(L/A)$.

If $U \subset L$, then by induction, there exists $H \in \Proj_\f{H}(U)$ and then $H \in \Proj_\f{H}(L)$
by Lemma \ref{transc}.  If $U = L$, then $L/A \in \f{H}$.  Either $L \in \f{H}$ or $L \not\in \f{H}$ and $\Proj_\f{H}(L) \ne \emptyset$ by hypothesis. \end{proof}

\begin{Cor}\label{split}  Let $\f{F}$ be a  formation.  Then $\f{F}$ is a  Schunck class if and only if $\f{F}$ is saturated. \end{Cor}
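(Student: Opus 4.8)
The forward implication needs nothing new: every Schunck class is a saturated homomorph, as already remarked, so a Schunck class is in particular saturated. For the converse I must show that a saturated formation $\f{F}$ is a Schunck class, that is, that $\prim\f{F} = \f{F}$. The inclusion $\f{F} \subseteq \prim\f{F}$ is immediate from $\quot$-closure, since every quotient, hence every primitive quotient, of an algebra in $\f{F}$ again lies in $\f{F}$. So the task reduces to proving $\prim\f{F} \subseteq \f{F}$, which I would establish by induction on $\dim L$ for $L \in \prim\f{F}$; the base case $L = 0$ is trivial, as $0 \in \f{F}$.

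For the inductive step, fix a minimal ideal $A$ of $L$. Every primitive quotient of $L/A$ is a primitive quotient of $L$, so $L/A \in \prim\f{F}$ and hence $L/A \in \f{F}$ by induction. If $L$ possesses two distinct minimal ideals $A_1 \neq A_2$, then $A_1 \cap A_2 = 0$ while $L/A_1, L/A_2 \in \f{F}$, so $L$ is a subdirect sum and $L \in \sdir\f{F} = \f{F}$ by the formation property. Thus I may assume $L$ has a unique minimal ideal $A$; note that $A$ is then abelian, since $A^2 \ideq L$ by Corollary \ref{powers} and $A^2$ is properly contained in $A$ by solubility, forcing $A^2 = 0$ by minimality.

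It remains to treat this monolithic case, and here I split on the position of $A$ relative to $\Phi(L)$. If $A \subseteq \Phi(L)$, then $L/A \in \f{F}$ together with saturation ($\frat$-closure) gives $L \in \frat\f{F} = \f{F}$ at once. If instead $A \not\subseteq \Phi(L)$, some maximal subalgebra $M$ does not contain $A$, whence $L = M + A$ by maximality; the abelianness of $A$ then makes $M \cap A$ a two-sided ideal of $L$, so $M \cap A = 0$ by minimality and $M$ complements $A$. Because $A$ is the unique minimal ideal it lies inside every nonzero ideal, so $\core_L(M) = 0$ and $L$ is primitive with $\soc(L) = A$. Then $L$ is a primitive quotient of itself, and $L \in \prim\f{F}$ forces $L \in \f{F}$, completing the induction. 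The point requiring care is this final step: one must check that the complemented monolithic algebra is genuinely primitive, equivalently that $M$ has trivial core, so that membership in $\prim\f{F}$ may be invoked; the only computation involved is the verification that $M \cap A$ is an ideal, using merely that $A$ is an abelian ideal with $L = M + A$.
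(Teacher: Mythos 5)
Your proof is correct, but it takes a genuinely different route from the paper's. The paper argues entirely through the projector machinery: for the forward direction it takes a minimal ideal $A\subseteq\Psi(L)$ with $L/A\in\f{F}$ and derives a contradiction from Lemma \ref{Hcomp} (an $\f{F}$-projector would have to complement $A$); for the converse it invokes Lemma \ref{schunck}, so that it suffices to produce an $\f{F}$-projector when $L\notin\f{F}$ and $L/A\in\f{F}$, and it then checks that a complement $U$ of the non-Frattini minimal ideal $A$ is such a projector, the formation hypothesis entering via $L/(K\cap A)\in\f{F}$. You instead verify $\prim\f{F}=\f{F}$ straight from the definition of a Schunck class, by induction on dimension, splitting the monolith-free case off with $\sdir$-closure, the case $A\subseteq\Phi(L)$ with $\frat$-closure, and reducing the remaining case to the observation that a monolithic algebra with complemented socle has a core-free maximal subalgebra and is therefore itself primitive, so that $\prim$-closure applies directly; your forward direction simply cites the (unproved but easy) remark that a Schunck class is a saturated homomorph. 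What your route buys is independence from Theorem \ref{GPS}, Lemma \ref{schunck} and the whole projector apparatus -- it works from the closure-operation definitions alone; what the paper's route buys is that it exhibits the $\f{F}$-projector explicitly and keeps the corollary inside the projector framework it has just built. The only point where you should be slightly more explicit is the meaning of ``primitive'': your verification (unique minimal ideal, abelian, complemented, hence $\core_L(M)=0$) does establish primitivity in the sense the paper uses, and the self-centralising property of the socle, if one wants it, follows because $\cser_L(A)\cap M$ is an ideal of $L$ avoiding $A$.
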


\begin{proof} Suppose $\f{F}$ is a Schunck class.  Suppose $L/\Psi(L) \in \f{F}$.  We have to
prove that $L \in \f{F}$.  We may suppose $\Psi(L) \ne 0$.  Let $A \subseteq \Psi(L)$ be a minimal ideal of $L$.  By induction, $L/A \in \f{F}$.  Let $U \in \Proj_\f{F}(L)$.  If $L \not\in \f{F}$, then by Lemma \ref{Hcomp}, $U$ complements $A$ in $L$ contrary to $A \subseteq \Psi(L)$.  Therefore    $L
\in \f{F}$.

Suppose $\f{F}$ is saturated.  Suppose $L \not\in \f{F}$ and that $A$ is a minimal ideal of $L$ with $L/A \in \f{F}$.  By Lemma \ref{schunck}, it is sufficient to prove that $\Proj_\f{F}(L) \ne \emptyset$.  Since $\f{F}$ is saturated, $A \not\subseteq \Psi(L)$, so there exists a maximal
subalgebra $U$ complementing $A$ in $L$.  Suppose $K$ is a  ideal of $L$ and that $U+K/K$ is
not $\f{F}$-maximal in $L/K$.  Then we must have $K \subseteq U$ and $L/K \in \f{F}$.  But then
$L/(K \cap A) \in \f{F}$ since $\f{F}$ is a formation, that is, $L \in \f{F}$.
\end{proof}

\begin{lemma}\label{satH} Let $\f{H}$ be a  Schunck class.  Then
\begin{enumerate}
\item $L,M \in \f{H}$ implies $L \oplus M \in \f{H}$, and 
\item $L/ \Psi(L) \in \f{H}$ implies $L \in \f{H}$.\qed \end{enumerate}
\end{lemma}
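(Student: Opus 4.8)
The plan is to prove both parts as corollaries of the projector machinery already assembled, treating each clause separately.

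For part (1), the approach is to show that the direct sum of two $\f{H}$-members lies in $\f{H}$ by exhibiting a projector that fills up the whole algebra. Set $L = A \oplus M$ with $A, M \in \f{H}$, regarded as ideals of $L$. The natural move is to consider $A$ as a (possibly non-minimal) ideal with $L/A \cong M \in \f{H}$. Since $\f{H}$ is a Schunck class, it is projective by Theorem \ref{GPS}, so $\Proj_{\f{H}}(L) \ne \emptyset$; pick $U \in \Proj_{\f{H}}(L)$. By Theorem \ref{projcov}, $U$ is in fact an $\f{H}$-covering subalgebra. The goal is to conclude $U = L$, i.e. $L \in \f{H}$. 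I would argue by contradiction using Lemma \ref{Hcomp}: if $L \notin \f{H}$, then passing to a minimal ideal $B$ of $L$ contained in $A$ (with $L/B \in \f{H}$ by induction on dimension), the projector $U$ would have to complement $B$, forcing $U$ to be a proper subalgebra. The key tension to resolve is that $U$ ought simultaneously to contain a copy of each of the $\f{H}$-summands; since $U$ is $\f{H}$-covering and both $A$ and $M$ are in $\f{H}$, the covering property applied to the subalgebras $U + A$ and $U + M$ should force $U$ to absorb each summand, yielding $U = A \oplus M = L$.

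For part (2), this is essentially a restatement of the saturation direction of Corollary \ref{split} specialised to the Frattini ideal $\Psi(L)$, so I would derive it directly from that corollary together with Lemma \ref{satH}(1). The cleanest route: a Schunck class is in particular saturated (a Schunck class is a saturated homomorph, as noted after the definitions), and $\Psi(L)$ is by definition the largest ideal contained in $\Phi(L)$. The statement $L/\Psi(L) \in \f{H} \implies L \in \f{H}$ is then immediate from $\frat$-closure once one checks $\Psi(L) \subseteq \Phi(L)$. The subtlety, as the earlier Frattini example warns, is that for Leibniz algebras $\Phi(L)$ itself need not be an ideal, which is precisely why the statement is phrased with $\Psi(L)$ rather than $\Phi(L)$; working with the ideal $\Psi(L)$ sidesteps this and lets the group-theoretic argument transfer unchanged.

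The main obstacle I anticipate is in part (1): ensuring that the projector $U$ genuinely contains both summands rather than merely a diagonal-type complement, since in the Leibniz setting the right action can interact asymmetrically with the two factors (the socle of any primitive quotient may be symmetric or antisymmetric, per Lemma \ref{irred}). To handle this rigorously I would reduce to the primitive case via a minimal ideal $B \subseteq A$: apply Lemma \ref{primcov} to the primitive quotient $L/K$ and use that $M + K / K$ already lies in $\f{H}$, so any $\f{H}$-maximal complement must coincide with the full preimage of $M$. Inducting on $\dim L$ and invoking the transitivity Lemma \ref{transc} to lift covering subalgebras through the ideal $B$ then closes the argument, giving $L \in \f{H}$ and hence $L \oplus M \in \f{H}$ as required.
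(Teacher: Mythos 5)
The paper offers no proof of this lemma at all---the \qedsymbol\ attached to the statement signals that the Lie-algebra argument transfers verbatim---so your proposal must be measured against the standard argument. Your part (2) is correct: $\Psi(L)$ is by definition an ideal of $L$ contained in $\Phi(L)$, a Schunck class is $\frat$-closed (the paper records that a Schunck class is a saturated homomorph), and $\frat$-closure applied with $K=\Psi(L)$ gives the claim at once. (Your passing suggestion to invoke part (1) of the very lemma being proved is circular and unnecessary, but the clean route you then describe stands on its own.)

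Part (1) has a genuine gap. Nowhere do you use the defining feature of the direct sum, namely that $LM=ML=0$ when the two summands are regarded as ideals; without that the statement is false (a split extension of one $\f{H}$-algebra by another need not lie in $\f{H}$), so any correct proof must exploit it. Neither of the two mechanisms you propose delivers. First, applying the covering property of $U$ to $U+A$ and $U+M$: once $U$ complements a minimal ideal $B\subseteq A$, both $U+A$ and $U+M$ already equal $L$, so the covering property with kernels $A$ and $M$ returns only $U+A=L$ and $U+M=L$---facts you started with; it does not force $A\subseteq U$ or $M\subseteq U$, and neither containment need hold. Second, the claim that in a primitive quotient $L/K\notin\f{H}$ ``any $\f{H}$-maximal complement must coincide with the full preimage of $M$'' is the reverse of what happens: if $\soc(L/K)$ is the image of a minimal ideal inside $A$, then because $AM=MA=0$ the image of $M$ centralises the socle and therefore lies \emph{inside} it (since $\cser_P(\soc(P))=\soc(P)$ for a primitive algebra $P$, cf.\ Theorem \ref{psi0}), not in a complement to it. That observation is in fact the whole proof: for any primitive quotient $P=L/K$ of $L\oplus M$, one of the ideals $(L+K)/K$, $(M+K)/K$ contains $\soc(P)$ and the other is then trapped inside $\cser_P(\soc(P))=\soc(P)$, so $P$ is a quotient of $L$ or of $M$ and hence lies in $\f{H}$; as $\f{H}=\prim\f{H}$, this gives $L\oplus M\in\f{H}$. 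I recommend replacing the projector argument for (1) by this direct appeal to $\prim$-closure.
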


\begin{Cor}\label{central}   Let $\f{H} \ne \{0\}$ be a  Schunck class.  Suppose $A$ is a central ideal of $L$ and $L/A \in \f{H}$.  Then $L \in \f{H}$. \qed\end{Cor}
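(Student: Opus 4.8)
The plan is to recognise Corollary~\ref{central} as a genuine corollary of Lemma~\ref{satH}, by exhibiting $L$ as a direct sum of two members of $\f{H}$ once the central ideal has been cut down to a minimal one.

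First I would reduce to the case where $A$ is a \emph{minimal} ideal, by induction on $\dim A$. Given an arbitrary central ideal $A$ with $L/A \in \f{H}$, choose a minimal ideal $A_0 \subseteq A$; as a subideal of a central ideal it is itself central. In $L/A_0$ the image $A/A_0$ is central and $(L/A_0)/(A/A_0) \cong L/A \in \f{H}$, with $\dim(A/A_0) < \dim A$, so by the induction hypothesis $L/A_0 \in \f{H}$. It then remains to pass from $L/A_0 \in \f{H}$ across the central \emph{minimal} ideal $A_0$ to $L \in \f{H}$; thus it suffices to treat a central minimal ideal $A$ with $L/A \in \f{H}$.

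So assume $A$ is minimal and central. As an $L$-module $A$ is irreducible with trivial action, hence $1$-dimensional; and since $\f{H} \ne \{0\}$ is a homomorph it contains the $1$-dimensional abelian algebra (a quotient of the abelianisation of any nonzero member), so $A \in \f{H}$. Suppose, for contradiction, $L \notin \f{H}$. By Theorem~\ref{GPS} a Schunck class is projective, so there is a projector $U \in \Proj_\f{H}(L)$, and by Lemma~\ref{Hcomp} $U$ complements $A$ in $L$. The key point is that centrality of $A$ forces $U$ to be an ideal: since $AU = UA = 0$ we get $LU = (U+A)U = U^2 \subseteq U$ and $UL = U(U+A) = U^2 \subseteq U$. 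Hence $L = U \oplus A$ is a direct sum of ideals with $U \cong L/A \in \f{H}$ and $A \in \f{H}$, so $L \in \f{H}$ by Lemma~\ref{satH}(1), a contradiction. Therefore $L \in \f{H}$.

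The only step needing care---and the main obstacle---is exactly the passage that lets Lemma~\ref{satH}(1) apply: one must check that the $\f{H}$-projector complementing the central minimal ideal is in fact an ideal, turning an abstract complement into an honest direct summand; everything else is bookkeeping. (An alternative route avoids projectors entirely. Since $\f{H} = \prim\f{H}$, it is enough to show every primitive quotient $P = L/K$ of $L$ lies in $\f{H}$. If $A \subseteq K$ then $P$ is a quotient of $L/A \in \f{H}$ and we are done by $\quot$-closure; if $A \not\subseteq K$ then the nonzero central image of $A$ in $P$ contains $\soc(P)$, so $\soc(P)$ is centralised by all of $P$, which contradicts the self-centralising property of the socle of a primitive algebra unless $P = \soc(P)$ is the $1$-dimensional algebra---and that algebra lies in $\f{H}$.)
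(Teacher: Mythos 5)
Your proof is correct. The paper gives no explicit argument (the \qedsymbol\ indicates the Lie-algebra proof carries over verbatim), and your route --- reduce to a minimal central ideal, note that $\f{H}$ contains the $1$-dimensional algebra, observe that centrality forces the complement supplied by Theorem~\ref{GPS} and Lemma~\ref{Hcomp} to be an ideal, and conclude via Lemma~\ref{satH}(1) --- is exactly the intended derivation from the preceding lemma.
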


\section{Schunck formations}
In this section, I consider the special properties of those Schunck classes which are formations.

\begin{definition}  A homomorph $\f{H}$ is said to be \textit{split} if $L \in \f{H}$ and $A$ an abelian ideal of $L$ imply that the split extension of $A$ by $L/A$ is
also in $\f{H}$. \end{definition}

\begin{lemma}\label{Fissplit} Let $\f{F}$ be a formation.  Then $\f{F}$ is split. \qed
\end{lemma}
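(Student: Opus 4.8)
The plan is to realise the target split extension as a quotient of a larger split extension built on $L$ itself, and to exhibit that larger algebra as a subdirect sum of two copies of $L$.

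Write $\bar L = L/A$ and let $S = A \rtimes \bar L$ denote the split extension of $A$ by $\bar L$ that is to be shown to lie in $\f F$; here $\bar L$ acts on $A$ through the action of $L$, which is well defined precisely because $A$ is abelian. As an auxiliary object, I would form $\tilde S = A \rtimes L$, the split extension of $A$ (regarded as an $L$-bimodule via the ideal structure of $A \ideq L$) by the full algebra $L$. Since $A$ is abelian, $AA = 0$, so $A$ acts trivially on itself inside $\tilde S$; this is the one place the hypothesis on $A$ is used.

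First I would show $\tilde S \in \f F$. Define $p_1, p_2 \co \tilde S \to L$ by $p_1(a,x) = x$ and $p_2(a,x) = a + x$ (using $A \subseteq L$). Both are surjective; $p_1$ is visibly a homomorphism, and the homomorphism property of $p_2$ reduces, after expanding the products in $\tilde S$ and in $L$, to the identity $a_1 a_2 = 0$, which holds because $A$ is abelian. Their kernels are $\ker p_1 = \{(a,0) \mid a \in A\}$ and $\ker p_2 = \{(a,-a)\mid a \in A\}$, which intersect in $0$. Thus $\tilde S$ is a subdirect sum of two copies of $L \in \f F$, whence $\tilde S \in \sdir\f F = \f F$.

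Finally I would exhibit $S$ as a quotient of $\tilde S$. The map $q \co \tilde S \to S$ given by $q(a,x) = (a, x + A)$ is a surjective homomorphism: the verification comes down to the fact that the action of $\bar L$ on $A$ in $S$ is exactly the action of $L$ on $A$ read modulo $A$, so the products agree term by term. Hence $S \cong \tilde S / \ker q \in \quot\f F = \f F$, as required. The only real care needed is in fixing the side and sign conventions of the Leibniz semidirect product so that the three maps $p_1, p_2, q$ are genuinely algebra homomorphisms; once these are pinned down, each verification is a short direct computation, and the abelian hypothesis enters solely to annihilate the term $a_1 a_2$.
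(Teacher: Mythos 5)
Your proof is correct and is essentially the argument the paper omits (the \qedsymbol\ after the statement indicates the proof is the same as for Lie algebras): realise $A\rtimes L$ as a subdirect sum of two copies of $L$ via $(a,x)\mapsto x$ and $(a,x)\mapsto a+x$, then pass to the quotient $A\rtimes(L/A)$. The Leibniz setting introduces no new difficulty since, as you note, the abelian hypothesis kills the products $ab$ for $a,b\in A$ on both sides, so both the homomorphism property of $p_2$ and the well-definedness of the $L/A$-bimodule structure on $A$ go through unchanged.
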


\begin{lemma}\label{SplitF} Let $\f{H}$ be a Schunck class which is split.  Then $\f{H}$ is a formation. \qed\end{lemma}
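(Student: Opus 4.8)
The plan is to show that the split Schunck class $\f{H}$ is closed under the operation $\sdir$; as $\f{H}$ is already a homomorph, this makes it a formation. I would argue by contradiction using the lemma characterising non-formations: were $\f{H}$ not a formation, there would be a Leibniz algebra $L\notin\f{H}$ with distinct minimal ideals $K_1,K_2$ such that $L/K_1,L/K_2\in\f{H}$. Distinct minimal ideals meet in $0$, so $K_1\cap K_2=0$. By Corollary \ref{powers}, each $K_i^2$ is an ideal of $L$ properly contained in the soluble minimal ideal $K_i$, hence $K_i^2=0$ and $K_i$ is abelian; moreover $K_1K_2\subseteq K_1\cap K_2=0=K_2K_1$, so $A:=K_1+K_2$ is an abelian ideal, and $L/A\in\f{H}$ as a quotient of $L/K_1$. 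The goal is to reach a contradiction by proving $L\in\f{H}$.

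The essential use of the hypothesis is the following. Applying splitness to $L/K_2\in\f{H}$ and its abelian ideal $A/K_2\cong K_1$, the split extension of $A/K_2$ by $(L/K_2)\big/(A/K_2)=L/A$ lies in $\f{H}$. Identifying $A/K_2$ with $K_1$ as an $L/A$-bimodule, this says that the split extension $S$ of $K_1$ by $L/A$, with $L/A$ acting on $K_1$ through the $L$-action, lies in $\f{H}$. It is from $S$ that I shall read off membership of the awkward quotients of $L$.

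Because $\f{H}$ is a Schunck class ($\prim\f{H}=\f{H}$), it suffices to prove that every primitive quotient $P=L/C$ of $L$ lies in $\f{H}$, and I would split this into cases. If $K_1\subseteq C$ or $K_2\subseteq C$ then $P$ is a quotient of $L/K_1$ or of $L/K_2$, so $P\in\f{H}$; and if $P$ is one-dimensional then $P\in\f{H}$ since the one-dimensional algebra is a quotient of the nonzero soluble algebra $L/K_1$. In the remaining case $K_1,K_2\not\subseteq C$, so $K_i\cap C=0$ and $(K_i+C)/C\cong K_i$ is a minimal ideal of $P$; both therefore coincide with the unique minimal ideal $\soc(P)=:B$, giving $K_1+C=K_2+C=:B'\supseteq A$, an isomorphism $B\cong K_1$ of $L/A$-modules, and $P/B=L/B'$ a quotient of $L/A$.

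The main obstacle is this last case, where I must exhibit $P$ as a quotient of $S$. By Lemma \ref{primcov} the socle $B$ of the primitive algebra $P$ has a complement, so $P$ is the split extension of $B\cong K_1$ by $P/B$. The point to check is that the $L/A$-action on $K_1$ factors through $P/B=(L/A)\big/(B'/A)$: an element of $B'=K_1+C$ acts on $K_1$ exactly as its $C$-component does, because $K_1^2=0$, and that component acts as $0$ since it is annihilated in $P=L/C$. Hence the identity on $K_1\cong B$ together with the quotient map $L/A\to P/B$ induce a surjective homomorphism $S\to P$, so $P\in\f{H}$ by $\quot$-closure. Every primitive quotient of $L$ then lies in $\f{H}$, and the Schunck property forces $L\in\f{H}$, contradicting $L\notin\f{H}$. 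Therefore $\f{H}$ is closed under $\sdir$ and is a formation.
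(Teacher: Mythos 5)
Your proof is correct, and it is essentially the argument the paper intends: the lemma carries the end-of-proof symbol because the author is invoking the standard Lie-algebra proof (reduce via the non-formation lemma to two minimal ideals $K_1,K_2$ with $K_1\cap K_2=0$, note $A=K_1+K_2$ is abelian so splitness applied to $A/K_2\id L/K_2$ puts the split extension of $K_1$ by $L/A$ into $\f{H}$, and then exhibit each primitive quotient not killing a $K_i$ as a quotient of that split extension), and you have reproduced it with the Leibniz-specific points ($K_iK_j\subseteq K_i\cap K_j$ even though products of ideals need not be ideals, and $C$ centralising $K_1$) correctly checked. The only blemish is the citation of Lemma \ref{primcov} for the existence of a complement to $\soc(P)$ --- that lemma describes which subalgebras the complements are, while their existence is part of the definition of a primitive algebra (a maximal subalgebra with trivial core complements the socle) --- but this does not affect the argument.
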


\begin{definition}  Let $\f{F}$ be a formation of soluble Leibniz algebras.  The class {\em locally defined by} $\fF$ is the class $\loc(\fF)$ of all Leibniz algebras $L$ with the property that, for each chief factor $A/B$ of $L$, we have $L/ \cser_L(A/B) \in \fF$. \end{definition}

Since $N(L)$ is the intersection of the centralisers of the chief factors of $L$, this condition is equivalent to $L/N(L) \in \fF$.

\begin{theorem}  Let $\fF$ be a formation of soluble Leibniz algebras.  The the class $\f{H}$ locally defined by $\fF$ is a Schunck formation.
\end{theorem}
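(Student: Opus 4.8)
The plan is to work throughout with the reformulation noted immediately above the theorem, namely that $L \in \fH$ if and only if $L/N(L) \in \fF$, and to establish two things separately: that $\fH$ is a formation, and that $\fH$ is saturated. Once both are in hand, the conclusion that $\fH$ is a Schunck class is immediate from Corollary~\ref{split}, which says that a formation is a Schunck class precisely when it is saturated. So the whole argument reduces to verifying $\quot$-closure, $\sdir$-closure, and $\frat$-closure of $\fH$, each time translating the question into one about $\fF$ via the nil radical.

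For the formation property, non-emptiness and $\quot$-closure are routine: every nilpotent $L$ has $N(L)=L$, so $L/N(L)=0 \in \fF$ and hence $\fN \subseteq \fH$; and if $L \in \fH$ with $K \id L$, then $(N(L)+K)/K$ is a nilpotent ideal of $L/K$, so it lies in $N(L/K)$, whence $(L/K)/N(L/K)$ is a quotient of $L/N(L) \in \fF$ and $\quot$-closure of $\fF$ gives $L/K \in \fH$. The substantial part is $\sdir$-closure. Given ideals with $L/K_i \in \fH$ and $\bigcap_i K_i = 0$, finite-dimensionality lets me pass to finitely many $K_1,\dots,K_n$ with $\bigcap_{i=1}^n K_i = 0$. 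Let $M_i \id L$ be the preimage of $N(L/K_i)$, so $L/M_i \in \fF$ and $N(L)\subseteq M_i$, and set $D = \bigcap_i M_i$. Then $L/D$ is a subdirect sum of the $L/M_i$, hence $L/D \in \fF$. To conclude $L \in \fH$ I must show $D = N(L)$, i.e. that $D$ is nilpotent: each $D/(D\cap K_i)$ embeds in $M_i/K_i = N(L/K_i)$ and so is nilpotent, and since $\bigcap_i(D\cap K_i)=0$ the ideal $D$ embeds as a subalgebra of the finite direct sum $\bigoplus_i D/(D\cap K_i)$, which is nilpotent. Thus $D$ is a nilpotent ideal, $D \subseteq N(L)$, and so $D = N(L)$.

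For saturation, suppose $K \id L$ with $K \subseteq \Phi(L)$ and $L/K \in \fH$, and let $M \id L$ be the preimage of $N(L/K)$, so that $M/K \in \fN$, $N(L)\subseteq M$, and $L/M \in \fF$. The key point is that $M$ is in fact nilpotent: applying the $\fratsnr$-closure of $\fN$ (Barnes \cite[Theorem 3.6]{LeibEngel}) with $A=M$ and $B=K$ — legitimate since an ideal is in particular a right subideal and $K \subseteq \Phi(L)$ — gives $M \in \fN$. Combined with $N(L)\subseteq M$ this yields $M = N(L)$, so $L/N(L) = L/M \in \fF$ and $L \in \fH$. Hence $\fH$ is a saturated formation, and Corollary~\ref{split} shows it is a Schunck class; therefore $\fH$ is a Schunck formation.

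I expect the two genuine obstacles to be exactly the two places where $\fN$ must be used beyond the general formation axioms. The $\sdir$-step depends on $\fN$ being closed under finite subdirect sums, which I would get from nilpotency of finite direct sums together with the inheritance of nilpotency by subalgebras; and the saturation step depends essentially on the $\fratsnr$-closure of $\fN$, since without it the preimage $M$ of $N(L/K)$ need not be nilpotent and the identification $M = N(L)$ would fail. Each of these is the crux of its respective closure property, and both are available as cited results for Leibniz algebras, so the remaining manipulations are the routine nil-radical bookkeeping indicated above.
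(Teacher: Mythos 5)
Your proof is correct and follows essentially the same route as the paper: the crux in both is the saturation step, where the preimage $N$ of $N(L/K)$ is shown to be nilpotent via the $\fratsnr$-closure of $\fN$ from \cite[Theorem 3.6]{LeibEngel}, forcing $N = N(L)$ and hence $L/N(L) \in \fF$. The only difference is that you verify the formation property ($\quot$- and $\sdir$-closure) by explicit nil-radical bookkeeping, whereas the paper dismisses it in one line as a condition on chief factors; your filled-in details are sound.
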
 

\begin{proof}  Since the condition for an algebra $L$ to be in $\f{H}$ is a condition on the chief factors, $\f{H}$ is a formation.  We have to prove that it is saturated.  So suppuse that $K \id L$, $K \subseteq \Phi(L)$ and that $L/K \in \fH$.  Let $N/K = N(L/K)$.  Then  $N$ is nilpotent by Barnes \cite[Theorem 3.6]{LeibEngel}, so $N = N(L)$ and $L \in \fH$.
\end{proof}

Note that, as $L/\cser_L(A/B)$ is a Lie algebra, we can always replace the defining formation $\fF$ by the class of Lie algebras in $\fF$.  The class $\f{N}$ of nilpotent algebras is locally defined by the class $\{0\}$.  We denote by $\f{Cs}$ the class of completely soluble Leibniz algebras, that is, those algebras with nilpotent derived algebra.  As $L'$ nilpotent is equivalent to $L/N(L)$ abelian, $\fCs$ is locally defined by the formation $\f{A}$ of abelian algebras.  By Ayupov and Omirov \cite[Theorem 4]{AyO}, a soluble Leibniz algebra over a field of characteristic $0$ is completely soluble. 

\begin{definition}  Let $\Lambda$ be a normal $F$-subspace of the algebraic closure $\bar{F}$ of the field $F$.  The {\em eigenvalue defined class} $\ev(\Lambda)$ is the class of soluble Leibniz algebras with the property that for all $x \in L$, all eigenvalues of the left and right actions of $x$ on $L$ are in $\Lambda$.
\end{definition}

Note that an eigenvalue of the action on $L$ is necessarily an eigenvalue of the action on some chief factor of $L$.  Since the chief factor is either symmetric or antisymmetric, we need only consider the left actions.  Again, the defining property is a condition on the chief factors, so $\ev(\Lambda)$ is clearly a formation. 

Let $\Lambda$ be a normal subspace of the algebraic closure $\bar{F}$ of $F$.  Let $\phi: V \to V$ be a linear transformation of the vector space $V$.  Let $f(t)$ be the characteristic polynomial of $\phi$ and let $g(t)$ be the product of the linear factors $(t-\lambda_i)$ over $\bar{F}$ with $\lambda_i \in \Lambda$.  Put $K_{\Lambda}(\phi) = \{v \in V \mid g(\phi)v = 0\}$.  Then $g(t)$ is a polynomial over $F$ and $K_{\Lambda}(\phi)$ is a subspace of dimension $\deg(g)$.  If $h(t)$ is a polynomial over $F$ all of whose roots lie in $\Lambda$ and $v \in V$ satisfies $h(\phi)v = 0$, then $v \in K_{\Lambda}(\phi)$.  

\begin{lemma} \label{K-Lambda} Suppose that $d$ is a derivation of the algebra $L$ over $F$ and that $\Lambda$ is a normal subspace of $\bar{F}$.  Then $K_{\Lambda}(d)$ is a subalgebra of $L$.
\end{lemma}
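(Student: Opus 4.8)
The plan is to reduce everything to the behaviour of $d$ on generalized eigenspaces after extending scalars to $\bar F$, and then to exploit that $\Lambda$, being an $F$-subspace of $\bar F$, is closed under addition.

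First I would record the basic Leibniz-type identity for a derivation. For scalars $\alpha,\beta$ and elements $a,b\in L$, the derivation property gives $(d-(\alpha+\beta))(ab) = ((d-\alpha)a)b + a((d-\beta)b)$. Iterating this by an easy induction with binomial coefficients yields $(d-(\alpha+\beta))^N(ab) = \sum_{k=0}^N \binom{N}{k}((d-\alpha)^k a)((d-\beta)^{N-k}b)$. Consequently, if $(d-\alpha)^m a = 0$ and $(d-\beta)^n b = 0$, then taking $N=m+n-1$ forces every summand to vanish (each term has $k\ge m$ or $N-k\ge n$), so $(d-(\alpha+\beta))^{m+n-1}(ab)=0$.

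Next I would pass to the algebraic closure. Put $\bar L = L\otimes_F \bar F$ and extend $d$ to a derivation $\bar d$ of $\bar L$. Decompose $\bar L = \bigoplus_{\lambda\in\bar F}\bar L_\lambda$ into the generalized eigenspaces of $\bar d$. The identity just proved shows that a product of a generalized $\alpha$-eigenvector and a generalized $\beta$-eigenvector is a generalized $(\alpha+\beta)$-eigenvector, i.e. $\bar L_\alpha\,\bar L_\beta \subseteq \bar L_{\alpha+\beta}$ for all $\alpha,\beta\in\bar F$. The final step identifies $K_{\Lambda}(d)$ with the appropriate sum of eigenspaces. Because $g(t)$ is the product over $\bar F$ of the factors $(t-\lambda_i)$ of the characteristic polynomial of $d$ with $\lambda_i\in\Lambda$, taken with their full multiplicities (this is exactly what makes $\dim K_{\Lambda}(d)=\deg g$), we have $K_{\Lambda}(d)\otimes_F\bar F = \ker g(\bar d) = \bigoplus_{\lambda\in\Lambda}\bar L_\lambda$. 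Since $\Lambda$ is an $F$-subspace it is closed under addition, so for $\alpha,\beta\in\Lambda$ we have $\bar L_\alpha\bar L_\beta\subseteq \bar L_{\alpha+\beta}$ with $\alpha+\beta\in\Lambda$; hence $\bigoplus_{\lambda\in\Lambda}\bar L_\lambda$ is a subalgebra of $\bar L$. Intersecting with $L$ then gives that $K_{\Lambda}(d)=(K_{\Lambda}(d)\otimes\bar F)\cap L$ is a subalgebra of $L$.

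The point requiring the most care is the identification $K_{\Lambda}(d)\otimes\bar F = \bigoplus_{\lambda\in\Lambda}\bar L_\lambda$: one must be sure that $g(\bar d)$ annihilates each of these generalized eigenspaces completely and nothing outside their sum, which is precisely the statement that $g$ carries the full characteristic-polynomial multiplicity of each $\Lambda$-eigenvalue. Granting that, no genuine difficulty remains. Normality of $\Lambda$ is used only to guarantee that $g$ has coefficients in $F$, so that $K_{\Lambda}(d)$ is an $F$-subspace and the extension of scalars is legitimate, whereas the closure-under-multiplication property comes purely from $\Lambda$ being closed under addition.
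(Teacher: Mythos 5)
Your proof is correct and rests on the same key fact as the paper's: the derivation identity forces a product of generalized $\alpha$- and $\beta$-eigenvectors into the generalized $(\alpha+\beta)$-eigenspace, and $\Lambda$ is closed under addition. The paper packages this without extending scalars, by exhibiting the single annihilating polynomial $h(t)=\prod_{i,j}(t-\lambda_i-\lambda_j)$ over $F$ and invoking the remark preceding the lemma; your binomial expansion of $(d-(\alpha+\beta))^N(ab)$ is exactly the computation the paper leaves implicit in asserting $h(d)(ab)=0$.
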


\begin{proof}  Let $\lambda_1, \dots, \lambda_n$ be the eigenvalues of $d$ which lie in $\Lambda$. Put 
$$h(t) = \Pi_{i,j}(t-\lambda_i -\lambda_j).$$ 
Suppose $a,b \in K_{\Lambda}(d)$.  Since $d(ab) = (da)b + a(db)$, we have that $h(d)(ab)=0$.  But $h(t)$ is a polynomial over $F$ all of whose roots lie in $\Lambda$.  Therefore $ab \in K_{\Lambda}(d)$.
\end{proof}

\begin{theorem} \label{Ev} Let $\Lambda$ be a normal subspace of $\bar{F}$.  The class $\ev(\Lambda)$ of soluble Leibniz algebras is a Schunck formation.
\end{theorem}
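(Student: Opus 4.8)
The plan is to use that $\ev(\Lambda)$ is already a formation (established in the remark preceding the statement) and to appeal to Corollary \ref{split}, which tells us that a formation is a Schunck class precisely when it is saturated. Thus it suffices to prove that $\ev(\Lambda)$ is saturated, i.e.\ that $K \id L$, $K \subseteq \Phi(L)$ and $L/K \in \ev(\Lambda)$ imply $L \in \ev(\Lambda)$. The first move is to restate membership conveniently. By the remark that every eigenvalue of an action on $L$ occurs already on a chief factor, together with the fact that chief factors are symmetric or antisymmetric and that $\Lambda$ is a subspace (hence closed under negation and containing $0$), one has $L \in \ev(\Lambda)$ if and only if, for every $x \in L$, all eigenvalues of the left multiplication $\lambda_x$ on $L$ lie in $\Lambda$; equivalently $K_\Lambda(\lambda_x) = L$.

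So fix $x \in L$ and put $H = K_\Lambda(\lambda_x)$. Since left multiplications are derivations, Lemma \ref{K-Lambda} shows that $H$ is a subalgebra of $L$. Because $K$ is an ideal it is $\lambda_x$-invariant, so $\lambda_x$ induces the corresponding map on $L/K$, and the hypothesis $L/K \in \ev(\Lambda)$ says that every eigenvalue of $\lambda_x$ on $L/K$ lies in $\Lambda$. Passing to the generalized eigenspace decomposition of $\lambda_x$ over $\bar{F}$ and using that $K$ is $\lambda_x$-invariant, this forces every generalized eigenspace attached to an eigenvalue outside $\Lambda$ to be contained in $K$. Since $H$ is exactly the sum of the generalized eigenspaces for the eigenvalues lying in $\Lambda$ (this is the content of the definition of $K_\Lambda$ as the kernel of the product of the $\Lambda$-factors of the characteristic polynomial), it follows that $H + K = L$.

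The final step invokes the Frattini property. As $K \subseteq \Phi(L)$ and $H$ is a subalgebra with $H + K = L$, suppose $H \ne L$; then $H$ is contained in some maximal subalgebra $M$, and $M \supseteq \Phi(L) \supseteq K$, whence $L = H + K \subseteq M$, a contradiction. Therefore $H = K_\Lambda(\lambda_x) = L$, that is, all eigenvalues of $\lambda_x$ on $L$ lie in $\Lambda$. As $x$ was arbitrary, $L \in \ev(\Lambda)$, which establishes saturation and hence, by Corollary \ref{split}, that $\ev(\Lambda)$ is a Schunck formation.

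The reduction to left multiplications and the subalgebra property from Lemma \ref{K-Lambda} carry most of the weight, so the only delicate point is the eigenspace bookkeeping in the middle paragraph: one must check that $K_\Lambda(\lambda_x)$ interacts correctly with the $\lambda_x$-invariant ideal $K$ and its quotient, namely that the eigenvalues of $\lambda_x$ on $L$ split as those on $K$ together with those on $L/K$, and that membership of $K_\Lambda$ is detected by the product of the $\Lambda$-factors. This is exactly where the precise definition of $K_\Lambda(\phi)$ is needed, and it is the step I would write out most carefully.
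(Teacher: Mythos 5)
Your proof is correct and follows essentially the same route as the paper: establish that $\ev(\Lambda)$ is a formation, then verify saturation by observing that $K_{\Lambda}(\lambda_x)$ is a subalgebra (Lemma \ref{K-Lambda}) satisfying $K_{\Lambda}(\lambda_x)+K=L$, which forces $K_{\Lambda}(\lambda_x)=L$ since $K\subseteq\Phi(L)$. The only differences are cosmetic: the paper reduces to a minimal ideal and leaves the step $K_{\Lambda}(d_x)+A=L$ unjustified, whereas you work with a general ideal and spell out the generalized-eigenspace bookkeeping.
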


\begin{proof}  $\ev(\Lambda)$ is a formation.  Suppose that $A$ is a minimal ideal of $L$ and that $L/A \in \ev(\Lambda)$.  Suppose that $A \subseteq \Phi(L)$.  For $x \in L$, consider the inner derivation $d_x: L \to L$ given by $d_x(y) = xy$.  Since $L/A \in \ev(\Lambda)$, $K_{\Lambda}(d_x) + A = L$.  But $A \subseteq \Phi(L)$.  Therefore $K_{\Lambda}(d_x) = L$.  Therefore $L \in \ev(\Lambda)$.
\end{proof}

\begin{theorem} \label{th-supersat} The class $\f{U}$ of supersoluble Leibniz algebras is the Schunck formation $\f{Cs} \cap \ev(F)$.
\end{theorem}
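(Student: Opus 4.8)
The plan is to recall that $\f{U}$ is the class of soluble Leibniz algebras all of whose chief factors are one-dimensional, and then to establish the set equality $\f{U} = \fCs \cap \ev(F)$; the assertion that the right-hand side is a \emph{Schunck formation} will follow formally. Indeed $\fCs = \loc(\f A)$ and $\ev(F)$ (Theorem \ref{Ev}) are both Schunck formations, the intersection of two formations is again a formation (the defining conditions are conditions on chief factors, which combine), and the intersection of two saturated classes is saturated; so by Corollary \ref{split} the intersection $\fCs \cap \ev(F)$ is a Schunck formation. Thus everything reduces to the characterisation of supersolubility.

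For the inclusion $\f{U} \subseteq \fCs \cap \ev(F)$, I would take $L$ supersoluble and a chief factor $A/B$ with $\dim(A/B)=1$. The left action of $L$ on $A/B$ factors through the Lie algebra $L/\ker(A/B)$ (Lemma \ref{irred}) and lands in the abelian algebra $\End(A/B)\cong F$; hence it is a character $\chi\co L\to F$ vanishing on $L'$. So $L'$ centralises every chief factor, and Lemma \ref{nrad} gives $L'\subseteq N(L)$, whence $L/N(L)$ is abelian and $L\in\fCs$. Moreover the single eigenvalue of any $x$ on the one-dimensional factor $A/B$ lies in $F$, and every eigenvalue of an action on $L$ is realised on some chief factor, so $L\in\ev(F)$.

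For the reverse inclusion I would take $L\in\fCs\cap\ev(F)$ and any chief factor $A/B$, and show $\dim(A/B)=1$. By Lemma \ref{irred} the factor is symmetric or antisymmetric, so $\cser_L(A/B)$ is exactly the kernel of the left action, and $\bar L := L/\cser_L(A/B)$ embeds in $\End(A/B)$ by left multiplication. Since $L\in\fCs$, $L/N(L)$ is abelian, and $N(L)\subseteq\cser_L(A/B)$ by Lemma \ref{nrad}, so $\bar L$ is abelian; thus the operators $T_x$ ($x\in L$) form a commuting family. Since $L\in\ev(F)$ and eigenvalues on $L$ occur on chief factors, each $T_x$ has all its eigenvalues in $F$, hence is triangularisable over $F$.

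The main step is then the linear-algebra fact that a commuting family of operators, each triangularisable over $F$, is simultaneously triangularisable over $F$; this produces a common eigenvector $v\in A/B$ with $T_x v=\chi(x)v$ for all $x$, for some $\chi\co L\to F$. Then $Fv$ is invariant under the left action, and also under the right action (in the antisymmetric case $vx=0$, in the symmetric case $vx=-\chi(x)v$), so $Fv$ is a subbimodule of the irreducible module $A/B$, forcing $A/B=Fv$ to be one-dimensional. As this holds for every chief factor, $L$ is supersoluble. The only delicate point is this simultaneous triangularisation: one must use that $\ev(F)$ gives triangularisability over $F$ itself, not merely over $\bar F$, so that the common eigenvector actually lies in $A/B$; the combination of $\fCs$ (making the acting algebra abelian) with $\ev(F)$ (making its operators split over $F$) is precisely what collapses an irreducible factor to dimension one.
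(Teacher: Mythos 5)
Your proposal is correct and follows essentially the same route as the paper: both reduce to an irreducible chief factor on which the (abelian, by $\fCs$) quotient acts by commuting operators with eigenvalues in $F$, extract a common eigenvector, and use the symmetric/antisymmetric dichotomy of Lemma \ref{irred} to see that the resulting line is a subbimodule, forcing dimension one. The only differences are organisational: you argue directly on an arbitrary chief factor where the paper inducts on a minimal ideal $A\subseteq L'$, and you spell out the easy inclusion $\f{U}\subseteq\fCs\cap\ev(F)$ and the fact that the intersection is a Schunck formation, both of which the paper leaves implicit.
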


\begin{proof} We have $\f{U} \subseteq \f{Cs} \cap \ev(F)$.  Suppose that $L \in \f{Cs} \cap \ev(F)$.  Let $A \subseteq L'$ be a minimal ideal of $L$.  By induction, we may suppose that $L/A$ is supersoluble.  We have to prove $\dim(A)=1$.  But $A$ is an irreducible module for the abelian Lie algebra $L/L'$ and all eigenvalues of the left actions of elements are in $F$.  Therefore there exists a $1$-dimensional invariant subspace.  As $A$ is either symmetric or antisymmetric, this subspace is also invariant under the right actions, so $\dim(A) = 1$.
\end{proof}

\begin{cor} Let $L$ be a soluble Leibniz algebra.  Then $L$ is supersoluble if and only if every maximal subalgebra of $L$ has codimension $1$. \end{cor}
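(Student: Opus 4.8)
The plan is to prove the two implications separately, and it is convenient to phrase them in terms of the class $\f{D}$ of soluble Leibniz algebras all of whose maximal subalgebras have codimension $1$; the goal is then to show $\f{D}=\fU$, where $\fU=\fCs\cap\ev(F)$ is the Schunck formation of Theorem~\ref{th-supersat}. Since a maximal subalgebra of a quotient $L/K$ lifts to a maximal subalgebra of $L$ of the same codimension, $\f{D}$ is quotient-closed, which is what allows induction to proceed.

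\emph{Forward direction} ($\fU\subseteq\f{D}$). I would argue by induction on $\dim L$ that a supersoluble $L$ has all maximal subalgebras of codimension $1$. Because $L$ is supersoluble, every chief factor of $L$ is $1$-dimensional; in particular any minimal ideal $A$ satisfies $\dim A=1$. Given a maximal subalgebra $M$, if $A\subseteq M$ then $M/A$ is maximal in the supersoluble algebra $L/A$ and the induction hypothesis gives $\codim_{L/A}(M/A)=1$, hence $\codim_L M=1$; if $A\not\subseteq M$ then maximality forces $M+A=L$, so $\codim_L M\le\dim A=1$, and properness of $M$ gives equality.

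\emph{Converse} ($\f{D}\subseteq\fU$). Again I induct on $\dim L$. Let $A$ be a minimal ideal of $L$. Solubility together with Corollary~\ref{powers} (applied to $A^2\ideq L$) forces $A^2=0$, so $A$ is abelian; and since $L/A\in\f{D}$, induction gives $L/A\in\fU$. If $A\subseteq\Phi(L)$, then $A$ is an ideal contained in $\Phi(L)$ with $L/A\in\fU$, so saturation of the Schunck class $\fU$ yields $L\in\fU$ at once. If $A\not\subseteq\Phi(L)$, choose a maximal subalgebra $M$ with $A\not\subseteq M$, whence $M+A=L$. The key point is that $A\cap M$ is a two-sided ideal of $L$: it is stable under left and right multiplication by $M$ (because $A\ideq L$ and $A\cap M\subseteq M$) and is annihilated on both sides by $A$ (because $A^2=0$), hence is stable under all of $L=M+A$. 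Minimality of $A$ and $A\not\subseteq M$ then force $A\cap M=0$, and the hypothesis $\codim_L M=1$ gives $\dim A=1$. Since $L/A\in\fU$ and $\dim A=1$, pulling back a chief series of $L/A$ and prepending $0\subset A$ exhibits a chief series of $L$ with $1$-dimensional factors, so $L\in\fU$.

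The main obstacle is the converse in the case $A\not\subseteq\Phi(L)$: one must manufacture a $1$-dimensional minimal ideal out of the codimension hypothesis, and the crux is verifying that $A\cap M$ is an honest two-sided ideal of $L$ rather than merely an $M$-submodule. This is precisely where abelianness of $A$ (via Corollary~\ref{powers}) and the genuinely two-sided Leibniz structure enter. The complementary case $A\subseteq\Phi(L)$ needs no dimension count and is dispatched cleanly by saturation, which is available exactly because $\fU$ is a Schunck formation.
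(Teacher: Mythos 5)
Your proof is correct, and the forward direction coincides with the paper's. The converse, however, takes a genuinely different route. The paper argues: by induction $L/A$ is supersoluble for a minimal ideal $A$; if $L$ were not supersoluble, Theorem~\ref{th-supersat} (via the projector existence guaranteed for Schunck classes) supplies a $\fU$-projector $M$, which by Lemma~\ref{Hcomp} complements $A$, so $\dim A=\codim M=1$, a contradiction. You avoid the projector machinery entirely: you split on whether $A\subseteq\Phi(L)$, dispatching the Frattini case by saturation of $\fU$ and the other case by exhibiting a maximal subalgebra $M$ with $M+A=L$ and verifying directly that $A\cap M\ideq L$ (using $A^2=0$ from Corollary~\ref{powers} and solubility), whence $A\cap M=0$ and $\dim A=\codim M=1$. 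Your version is more elementary and self-contained, and it in fact supplies a detail the paper's terse argument leaves implicit: to apply the codimension hypothesis to the projector $M$ one must know $M$ is a \emph{maximal} subalgebra, and the justification is exactly your observation that for $M\le M'<L$ the intersection $M'\cap A$ is a two-sided ideal of $L$. What the paper's approach buys is brevity and uniformity with the surrounding projector formalism; what yours buys is independence from Theorem~\ref{GPS} and Lemma~\ref{Hcomp}, at the cost of a case split and the explicit ideal computation.
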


\begin{proof}  We use induction over $\dim(L)$.  Suppose $L$ is supersoluble and let $A$ be a minimal ideal of $L$.  Suppose $M$ is a maximal subalgebra of $L$.  If $M \supseteq A$, then by induction. $\codim(M) = 1$.  If $M \not\supseteq A$, then $M$ complements $A$ in $L$ and $\codim(M) = \dim(A) = 1$. 

Now suppose that every maximal subalgebra has codimension $1$.  Then $L/A$ is supersoluble.  If $L$ is not supersoluble, then by Theorem \ref{th-supersat}, there exists a $\fU$-projector $M$ of $L$.  As $M$ complements $A$, $\dim(A) =\codim(M) =  1$.  \end{proof}

\begin{definition} Let $\fF$ be a Schunck formation of soluble Leibniz algebras. 
Let $V$ be an irreducible $L$-module.  Then $V$ is called $\fF$-central if the split extension of $V$ by $L/\cser_L(V)$ is in $\fF$, and is called $\fF$-eccentric otherwise.  An $L$-module $W$ is called $\fF$-hypercentral if every composition factor of $W$ if $\fF$-central, and is called $\fF$-hypereccentric if every composition factor if $\fF$-eccentric. \end{definition}

\begin{theorem}\label{th-Fchief} Let $\fF$ be a Schunck formation and let $L$ be a Leibniz
algebra.  Then $L \in \fF$ if and only if every chief factor of $L$ is an $\fF$-central $L$-bimodule. \qed\end{theorem}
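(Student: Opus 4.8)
The plan is to prove the two implications separately: the forward one by a direct splitting argument, and the converse by induction on $\dim L$ together with the subdirect-sum closure of a formation.

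For the forward implication, suppose $L \in \fF$ and let $A/B$ be a chief factor, so that $V = A/B$ is an abelian minimal ideal of $L/B$, and $L/B \in \fF$ since $\fF$ is $\quot$-closed. Because a formation is split (Lemma \ref{Fissplit}), the split extension $X$ of $V$ by $L/A$ lies in $\fF$. The key observation is that $A$ acts trivially on $V$ — since $(A/B)^2 = 0$ forces $AA \subseteq B$ and $A$ is two-sided — so that $A \subseteq \cser_L(V)$. Writing $C = \cser_L(V)$, I would check that the image of $C/A$ is a two-sided ideal of $X$ (the cross terms vanish precisely because $C$ annihilates $V$) and that $X/(C/A)$ is the split extension of $V$ by $L/C$. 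As $\fF$ is $\quot$-closed, this quotient lies in $\fF$, which is exactly the assertion that $V = A/B$ is $\fF$-central.

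For the converse I would induct on $\dim L$, choosing a minimal ideal $A$. First I note that every chief factor of $L/A$ is a chief factor of $L$, and that passing to $L/A$ alters neither the relevant centraliser (one has $\cser_{L/A}(C/B) = \cser_L(C/B)/A$) nor the associated split extension; hence $L/A$ inherits the hypothesis and $L/A \in \fF$ by induction. If $L \in \fF$ we are done, so suppose $L \notin \fF$. Since $\fF$ is a Schunck class it is projective (Theorem \ref{GPS}), so a projector $U$ exists, and by Lemma \ref{Hcomp} it complements $A$ in $L$. Put $D = \cser_L(A)$, so that $A \subseteq D$ with $D$ an ideal of $L$ and $DA = AD = 0$, and set $K = D \cap U$. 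Using these facts I would verify that $K$ is a two-sided ideal of $L$ (not merely of $U$), that $K \cap A = 0$, and that $L/K$ is the split extension of $A$ by $L/D$, which is the extension witnessing the $\fF$-centrality of $A$; hence $L/K \in \fF$. Since $L/A, L/K \in \fF$ and $A \cap K = 0$, the algebra $L$ is a subdirect sum of members of $\fF$, so $L \in \sdir\fF = \fF$, contradicting $L \notin \fF$ and completing the induction.

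I expect the main obstacle to lie in the bookkeeping of the converse: confirming that $K = \cser_L(A) \cap U$ is genuinely a two-sided ideal of $L$, and that the quotient $L/K$ is \emph{exactly} the split extension of $A$ by $L/\cser_L(A)$ named in the definition of $\fF$-centrality, so that the hypothesis can be fed into the subdirect-sum closure. The companion check in the forward direction that $C/A$ is an ideal of $X$, and the routine verification that centralisers and split extensions are preserved under passage to $L/A$, are the other points where the two-sidedness of the Leibniz action must be handled with care; note that the argument is uniform, covering the case $D = L$ (where $A$ is central) without a separate appeal to Corollary \ref{central}.
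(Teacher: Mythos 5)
Your proof is correct and follows the standard argument that the paper omits (the \qedsymbol{} after the statement signals that the proof is the same as for Lie algebras): the forward direction via Lemma \ref{Fissplit} and $\quot$-closure, the converse by induction using a projector's complement to a minimal ideal $A$ (Lemma \ref{Hcomp}), the ideal $K=\cser_L(A)\cap U$, and $\sdir$-closure. The Leibniz-specific points you flag --- two-sidedness of $\cser_L(A)$ and of $K$, and the identification of $L/K$ with the split extension of the bimodule $A$ by $L/\cser_L(A)$ --- all go through exactly as you indicate.
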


Let $L$ be a Leibniz algebra and let $V,W$ be $L$-modules.  We can define a left action of $L$ on $V \otimes W$ as for Lie algebras, setting $x(v \otimes w) = xv \otimes w + v \otimes xw$.  There is in general, no such obvious way to define a right action.  If either, both $V$ and $W$ are symmetric or both are antisymmetric, then we can set $(v \otimes w)x = vx \otimes w + v \otimes wx$.  

\begin{lemma} \label{tensor} Suppose that $V, W$ are $\fF$-hypercentral $L$-modules, either both symmetric or both antisymmetric.  Then $V \otimes W$ is $\fF$-hypercentral.
\end{lemma}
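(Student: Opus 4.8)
The plan is to reduce to the case of irreducible $V$ and $W$, and then to realise every constituent of $V \otimes W$ as a chief factor of an explicitly constructed Leibniz algebra that I can show lies in $\fF$, invoking Theorem \ref{th-Fchief} at both ends.

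First I would reduce to the irreducible case. Choosing composition series of $V$ and of $W$ and forming the associated two-step filtration of $V \otimes W$, every constituent of $V \otimes W$ is a constituent of some $A \otimes B$, where $A$ runs over the constituents of $V$ and $B$ over those of $W$. Since being symmetric (respectively antisymmetric) passes to subfactors, each $A$ and $B$ is again symmetric (resp.\ antisymmetric), so each $A \otimes B$ carries the bimodule structure defined above, and each $A$, $B$ is $\fF$-central by hypothesis. Thus it suffices to treat $V, W$ irreducible and $\fF$-central.

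Next comes the key structural observation. Write $C_V = \cser_L(V)$ and $C_W = \cser_L(W)$. The split extensions of $V$ by $L/C_V$ and of $W$ by $L/C_W$ lie in $\fF$, so their quotients $L/C_V$ and $L/C_W$ lie in $\fF$; as $\fF$ is a formation, the subdirect sum $\hat L := L/(C_V \cap C_W)$, embedded in $L/C_V \oplus L/C_W$, also lies in $\fF$. Let $E$ be the split extension of $V \oplus W$ by $\hat L$. A direct check of centralisers gives $E/\cser_E(V) \cong L/C_V$ and $E/\cser_E(W) \cong L/C_W$, so the chief factors $V$ and $W$ of $E$ are $\fF$-central, while its remaining chief factors are those of $\hat L \in \fF$; hence $E \in \fF$ by Theorem \ref{th-Fchief}. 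Note also $C_V \cap C_W \subseteq \cser_L(V \otimes W)$, so every constituent $U$ of $V \otimes W$ satisfies $L/\cser_L(U) \cong \hat L / \cser_{\hat L}(U)$.

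The heart of the argument is a Heisenberg-type construction. Let $Z = V \otimes W$ and let $T = (V \oplus W) \oplus Z$ be the two-step nilpotent Leibniz algebra in which $Z$ is central and the product of $a,b \in V \oplus W$ is $p_V(a) \otimes p_W(b)$, where $p_V, p_W$ are the projections onto $V$ and $W$. Because $\hat L$ acts on $Z$ by the tensor action, it acts on $T$ by derivations, so the split extension $G$ of $T$ by $\hat L$ is a Leibniz algebra with $G/Z \cong E \in \fF$. I would then show $Z \subseteq \Phi(G)$: if a maximal subalgebra $M$ omitted $Z$, then $M + Z = G$, so $M$ would contain a lift of every $v \in V$ and of every $w \in W$; any such lifts multiply in $G$ to $v \otimes w \in Z$, forcing $Z \subseteq M$, a contradiction. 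Hence $Z$ is an ideal contained in $\Phi(G)$, so $Z \subseteq \Psi(G)$, and $G/\Psi(G)$, being a quotient of $E$, lies in $\fF$; by Lemma \ref{satH}(2), $G \in \fF$. Finally, each constituent $U$ of $Z$ is a chief factor of $G \in \fF$, hence $\fF$-central as a $G$-module by Theorem \ref{th-Fchief}; since $T \subseteq \cser_G(U)$ and $G/T \cong \hat L$, we get $G/\cser_G(U) \cong \hat L/\cser_{\hat L}(U) \cong L/\cser_L(U)$ acting identically on $U$, so $U$ is $\fF$-central as an $L$-module, as required. I expect the main obstacle to be the verification that $Z \subseteq \Phi(G)$ together with confirming that $G$ is genuinely a Leibniz algebra (the derivation identities for the internal product of $T$); the remaining steps are formation-closure bookkeeping and applications of Theorem \ref{th-Fchief}.
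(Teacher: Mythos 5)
Your proposal is correct and follows essentially the same route as the paper: reduce to irreducible $V,W$, pass to $L/(\cser_L(V)\cap\cser_L(W))\in\fF$, build the class-2 nilpotent algebra $(V\oplus W)\oplus(V\otimes W)$ with $vw=v\otimes w$, form the split extension, observe that $V\otimes W$ lies in the Frattini ideal, and conclude by saturation and Theorem \ref{th-Fchief}. Your explicit maximal-subalgebra verification that $V\otimes W\subseteq\Phi(G)$ is just a spelled-out version of the paper's remark that $\Phi(N)=N'$ and that $X/B$ does not split over the chief factors inside $N'$.
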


\begin{proof} We need only consider the case where both $V$ and $W$ are irreducible, then general case then following.  By replacing $L$ by $L/(\cser_L(V) \cap \cser_L(W))$, we can assume that $L$ is a Lie algebra in $\fF$.  We form the class 2 nilpotent Lie algebra $N = V \oplus W \oplus (V \otimes W)$ defining $vw = v \otimes w$ for $v \in V$ and $w \in W$.  Using the actions of $L$ on $V, W, V \otimes W$ to define the multiplication makes $X = L \oplus N$ into a Leibniz algebra in either case.  We have $\Phi(N) = N'$.  If $A \id X, A \subseteq N'$ and $A/B$ is a chief factor of $X$, then $X/B$ does not split over $A/B$ since $N/B$ does not split.  It follows that $X \in \fF$ and that $V \otimes W$ is $\fF$-hypercentral.
\end{proof}

\begin{lemma} \label{hom} Let $V, W$ be $\fF$-hypercentral $L$-modules, either both symmetric or both antisymmetric.  Then $\Hom(V,W)$ is $\fF$-hypercentral.
\end{lemma}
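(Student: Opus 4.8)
The plan is to reduce this lemma to the previous one via the natural bimodule isomorphism $\Hom(V,W) \cong V^* \otimes W$, where $V^*$ is the dual module, with left action $(xf)(v) = -f(xv)$ and right action determined by the common symmetry type of $V$ and $W$. Since $V$ and $W$ are either both symmetric or both antisymmetric, $V^*$ carries the same symmetry type as $V$, so $V^*$ and $W$ are again either both symmetric or both antisymmetric, and Lemma \ref{tensor} applies to $V^* \otimes W$ the moment we know that $V^*$ is $\fF$-hypercentral. Thus the whole statement comes down to the claim that the dual of an $\fF$-hypercentral module is $\fF$-hypercentral. This reduction is forced on us: unlike $V \otimes W$, the module $\Hom(V,W)$ does not arise as a commutator $N'$ inside a class-$2$ nilpotent ideal, so the saturation argument of Lemma \ref{tensor} cannot be applied to it directly, and the passage to the dual is exactly the step that must be handled separately.

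To prove the dual claim I would first reduce to $V$ irreducible: the composition factors of $V^*$ are the duals $S^*$ of the composition factors $S$ of $V$ (in reverse order), so it suffices to show that $S^*$ is $\fF$-central whenever $S$ is. Replacing $L$ by $L/\cser_L(S)$, which lies in $\fF$ and is a Lie algebra by Lemma \ref{irred}, I may assume $L \in \fF$ is a Lie algebra acting faithfully and irreducibly on $S$; put $n = \dim S$. By Lemma \ref{tensor} and induction, every tensor power $S^{\otimes k}$ is $\fF$-hypercentral, and since a quotient of an $\fF$-hypercentral module is again $\fF$-hypercentral (its composition factors occur among those of the original), each exterior power $\bigwedge^{k} S$, being a quotient of $S^{\otimes k}$, is $\fF$-hypercentral. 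In particular the one-dimensional module $\bigwedge^{n} S$ is $\fF$-central and $\bigwedge^{n-1} S$ is $\fF$-hypercentral.

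The remaining ingredient is a one-dimensional duality: if $D$ is a one-dimensional $\fF$-central module, then $D^*$ is $\fF$-central. Here $\cser_L(D) = \cser_L(D^*)$, and writing $\bar L = L/\cser_L(D)$, faithfulness on the one-dimensional $D$ forces $\dim \bar L \le 1$. If $\bar L = 0$ then $D$ and $D^*$ are both trivial and there is nothing to prove; if $\dim \bar L = 1$, write $\bar L = \langle x\rangle$ with $x$ acting on $D$ by a nonzero scalar $\lambda$, so that $x$ acts on $D^*$ by $-\lambda$. Negating $x$ and identifying $D^*$ with $D$ as vector spaces then gives an isomorphism between the split extension of $D^*$ by $\bar L$ and that of $D$ by $\bar L$ (in both symmetry types), whence $D^*$ is $\fF$-central. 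Applying this to $D = \bigwedge^{n} S$ shows that $(\bigwedge^{n}S)^*$ is $\fF$-central, and the natural isomorphism of $\bar L$-modules $S^* \cong \bigwedge^{n-1} S \otimes (\bigwedge^{n} S)^*$, coming from the perfect pairing $\bigwedge^{n-1}S \otimes S \to \bigwedge^{n}S$, lets Lemma \ref{tensor} conclude that $S^*$ is $\fF$-hypercentral. This establishes that $V^*$ is $\fF$-hypercentral, and Lemma \ref{tensor} applied to $V^* \otimes W \cong \Hom(V,W)$ finishes the proof.

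The main obstacle is this duality step. The determinant twist is essential rather than cosmetic: $V^*$ need not occur as a subquotient of any single tensor power $V^{\otimes k}$ (there is a central-character obstruction), so one cannot reach $V^*$ from $V$ by Lemma \ref{tensor} alone, and the isomorphism $S^* \cong \bigwedge^{n-1}S \otimes (\bigwedge^{n}S)^*$ together with the one-dimensional case is what bridges the gap.
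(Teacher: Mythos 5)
Your argument is correct and is essentially the argument the paper is invoking: the paper's proof of Lemma \ref{hom} simply says the result "follows from Lemma \ref{tensor} as for Lie algebras," i.e.\ via $\Hom(V,W)\cong V^*\otimes W$ together with the fact that duals of $\fF$-hypercentral modules are $\fF$-hypercentral, which is exactly what you prove (by the standard top-exterior-power twist $S^*\cong\bigwedge^{n-1}S\otimes(\bigwedge^n S)^*$ and the one-dimensional duality). You have supplied in full the details that the paper delegates to the Lie-algebra literature, and the symmetry-type bookkeeping (that $V^*$ and the exterior powers inherit the common symmetry type, so Lemma \ref{tensor} applies) is handled correctly.
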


\begin{proof}  We need only consider the case where both $V$ and $W$ are irreducible.  The result follows from Lemma \ref{tensor} as for Lie algebras.
\end{proof}

\begin{theorem}\label{th-Fcomp} Let $\fF$ be a Schunck formation of soluble Leibniz algebras and suppose $L \in \fF$.  Let $V$ be an $L$-module.  Then $V$ is the direct sum of an $\fF$-hypercentral module and an $\fF$-hypereccentric module. \end{theorem}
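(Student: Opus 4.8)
The plan is to isolate the $\fF$-hypercentral part of $V$ as a canonical submodule and then show that it splits off. First I would record the elementary closure properties: since the composition factors of an extension are the union of those of its submodule and its quotient, both the class of $\fF$-hypercentral modules and the class of $\fF$-hypereccentric modules are closed under submodules, quotients and extensions; in particular the sum of two $\fF$-hypercentral submodules is again $\fF$-hypercentral, being a quotient of their direct sum. Hence $V$ possesses a unique largest $\fF$-hypercentral submodule $A$. If $V/A$ had an $\fF$-central composition factor, its preimage in $V$ would be an $\fF$-hypercentral submodule strictly larger than $A$; so $V/A$ is $\fF$-hypereccentric. It therefore remains to prove that the sequence $0 \to A \to V \to V/A \to 0$ splits as a sequence of $L$-modules, that is, that $\operatorname{Ext}^1_L(V/A, A) = 0$.

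By induction along a composition series and the long exact sequences for $\operatorname{Ext}$ in both variables, it suffices to treat the irreducible case $\operatorname{Ext}^1_L(E, C) = 0$, where $E$ is $\fF$-eccentric and $C$ is $\fF$-central. I would identify this group with a first cohomology group $H^1\bigl(L, \Hom(E,C)\bigr)$ in the appropriate Leibniz cohomology, so that the task becomes the vanishing of $H^1(L, W)$ for the coefficient module $W = \Hom(E,C)$. The crucial point is that $W$ is $\fF$-hypereccentric. Lemmas \ref{tensor} and \ref{hom} are the tools for this: they show that tensor products and $\Hom$ of $\fF$-hypercentral modules of a common type stay $\fF$-hypercentral, and the same circle of ideas is meant to yield the mixed statement that $\Hom$ of an $\fF$-eccentric module into an $\fF$-central one carries no $\fF$-central composition factor. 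To invoke these lemmas I would first use Lemma \ref{irred} and Theorem \ref{inout} to reduce to modules homogeneous with respect to the symmetric/antisymmetric dichotomy, since the tensor and $\Hom$ constructions are available only when the factors are uniformly of one type.

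For the cohomological vanishing itself I would argue through covering subalgebras rather than cocycles. Given $L \in \fF$ and an $\fF$-hypereccentric $W$, form the split extension $Z = W \rtimes L$; the conjugacy classes of complements to $W$ in $Z$ are exactly $H^1(L,W)$. Every complement $L'$ is isomorphic to $L$, hence lies in $\fF$. I claim each such $L'$ is an $\fF$-covering subalgebra of $Z$, by induction on $\dim W$. If $W$ is irreducible, then $W$ is a minimal ideal of $Z$ with $Z/W \cong L \in \fF$ and (by Theorem \ref{th-Fchief}) $Z \notin \fF$, so Lemma \ref{H1} identifies $\Cov_\fF(Z)$ with the set of all complements to $W$; existence is guaranteed because $\fF$ is a Gasch\"utz class by Theorem \ref{GPS}. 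For reducible $W$, take a minimal submodule $W_1$, apply the inductive hypothesis in $Z/W_1$ and the base case in $W_1 \rtimes L'$, and conclude via the transitivity Lemma \ref{transc}. Thus every complement to $W$ is an $\fF$-covering subalgebra, and by Lemma \ref{conj} any two covering subalgebras are conjugate under the automorphisms $\alpha_w$, $w \in W$. Hence there is a single conjugacy class of complements, so $H^1(L,W)=0$. Feeding this back through the induction gives $\operatorname{Ext}^1_L(V/A,A)=0$, whence $V = A \oplus B$ with $B \cong V/A$ an $\fF$-hypereccentric module, as required.

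The main obstacle is the step isolated in the second paragraph: showing that $\Hom(E,C)$ is $\fF$-hypereccentric when $E$ is $\fF$-eccentric and $C$ is $\fF$-central. Over Lie algebras this is a clean separation-of-characters argument, but over Leibniz algebras it is complicated by the fact that a module need be neither symmetric nor antisymmetric and that the tensor and $\Hom$ operations of Lemmas \ref{tensor} and \ref{hom} are defined only when both factors share a type. Carrying out the reduction to homogeneous pieces and checking that this reduction respects the central/eccentric dichotomy is where the Leibniz-specific care concentrates; the remaining ingredients---the $\fF$-hypercentral radical, the $\operatorname{Ext}$ induction, and the covering-subalgebra proof of the $H^1$-vanishing---transcribe directly from the Lie theory.
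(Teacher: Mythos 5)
Your overall architecture --- isolate a hypercentral radical $A$, reduce the splitting of $V$ over $A$ to $\operatorname{Ext}^1(E,C)=0$ for $E$ irreducible eccentric and $C$ irreducible central, and prove the relevant $H^1$-vanishing by showing that all complements to a hypereccentric module in its split extension are $\fF$-covering subalgebras and hence conjugate --- is essentially the Lie-algebra proof that the paper invokes by citation. But there are two genuine gaps. The smaller one is at the very start: maximality of $A$ does not imply that $V/A$ is hypereccentric. A central composition factor of $V/A$ need not lie at the bottom of $V/A$ (it could sit on top of a non-split eccentric piece), in which case its preimage in $V$ is not hypercentral and no contradiction with the maximality of $A$ arises. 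To conclude that $V/A$ is hypereccentric you already need the vanishing $\operatorname{Ext}^1(C,E)=0$ in the \emph{other} direction (central on top of eccentric), which is itself part of what is being proved; the induction has to be organised to deliver both vanishings, not just $\operatorname{Ext}^1(E,C)=0$.

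The larger gap is the one you flag yourself, and it cannot be repaired by the route you sketch. You need $\Hom(E,C)$ to be $\fF$-hypereccentric --- indeed, to exist as an $L$-bimodule at all --- and you propose to reduce to the case where $E$ and $C$ have the same type using Theorem \ref{inout}. But Theorem \ref{inout} places the inner (antisymmetric) factors \emph{below} the outer (symmetric) ones, and Lemma \ref{LP} shows precisely that an extension of a symmetric module by an antisymmetric one need not split; so the mixed configuration, $C$ central antisymmetric at the bottom and $E$ eccentric symmetric on top, is exactly the case that cannot be reduced away, and for it the paper's $\Hom(V,W)$ construction (defined only when both modules have the same type) gives you nothing. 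You also may not appeal to the mixed tensor/Hom theorem stated after Corollary \ref{one-one}, nor to Theorem \ref{pair}: both are proved in the paper \emph{using} Theorem \ref{th-Fcomp}, so that would be circular. Finally, the identifications $\operatorname{Ext}^1_L(E,C)\cong H^1(L,\Hom(E,C))$ and of $H^1$ with conjugacy classes of complements are standard for Lie algebras but need to be re-justified in whichever Leibniz cohomology you intend; as written the proposal borrows them silently.
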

\begin{proof} As for Lie algebras, using the above lemmas.   See \cite[Theorem 4.4]{HyperC}\end{proof}

Let $\fF$ be a Schunck formation of soluble Leibniz algebras.  Then the class $\Lie\fF$ of Lie algebras in 
$\fF$ is a Schunck formation of soluble Lie algebras.  The primitives of $\Lie\fF$ are the primitives of $\fF$ which are Lie algebras.  If $L \in \Lie\fF$ and $V$ is an irreducible $L$-module, the split extension of $\sym V$ by $L$ is again a Lie algebra.  When I wish to emphasize that I am considering the module in the context of Lie algebras, I denote $\sym V$ by $\Lie V$.

\begin{theorem} \label{pair} Let $\fF$ be a Schunck formation of soluble Leibniz algebras.  Let $L$ be a soluble Leibniz algebra and let $V$ be an irreducible $L$-module.  Then $\sym V$ is $\fF$-central if and only if $\asym V$ is $\fF$-central.
\end{theorem}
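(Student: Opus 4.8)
The plan is to reduce to the Lie algebra $H = L/\cser_L(V)$ and then to play the non-splitting of a Loday--Pirashvili extension off against the hypercentral/hypereccentric decomposition of Theorem~\ref{th-Fcomp}. First I would record that, since the centraliser of an irreducible module is the kernel of its left action, $\cser_L(\sym V) = \cser_L(\asym V) = \cser_L(V)$, so both split extensions are built over the same $H$; replacing $L$ by $H$, I may assume $L=H$ acts faithfully on the irreducible module $V$. If $V$ is trivial then $\sym V = \asym V$ and there is nothing to prove, so assume $V$ nontrivial, whence $H \ne 0$. If neither $\sym V$ nor $\asym V$ is $\fF$-central the biconditional holds trivially, so I may assume at least one of them is central; as $\fF$ is a homomorph this forces the quotient $H\in\fF$, which is exactly what is needed to invoke Theorem~\ref{th-Fcomp}.

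The key construction is a two-term non-split bimodule. Starting from the Loday--Pirashvili sequence $0 \to \asym V \to \bar{V} \to \sym{\Hom(H,V)} \to 0$, I would choose a single irreducible symmetric submodule $S \cong \sym V$ of $\sym{\Hom(H,V)}$ --- one exists because $H$ is soluble, so $H\neq H'$ and the functionals vanishing on $H'$ contribute a copy of $V$ to $\Hom(H,V)$ --- and pull the sequence back along the inclusion $S \hookrightarrow \sym{\Hom(H,V)}$. This produces a bimodule $\bar{V}_1$ with $\asym V \subseteq \bar{V}_1$ and $\bar{V}_1/\asym V \cong \sym V$. A computation in the spirit of Lemma~\ref{LP} shows that this pulled-back sequence is non-split whenever $\dim H \ge 2$: a bimodule splitting would force every left action $T_a$ to be a scalar multiple of one fixed operator and $H'$ to act trivially, contradicting faithfulness. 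Since $\sym V$ and $\asym V$ are non-isomorphic irreducibles (the right action is nonzero on the former, zero on the latter), the only sub-bimodules of $\bar{V}_1$ are $0$, $\asym V$ and $\bar{V}_1$.

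Now suppose, for contradiction, that exactly one of $\sym V,\asym V$ is $\fF$-central. Applying Theorem~\ref{th-Fcomp} to $H\in\fF$ and the module $\bar{V}_1$ gives a decomposition $\bar{V}_1 = (\bar{V}_1)_{hc} \oplus (\bar{V}_1)_{he}$ into a hypercentral and a hypereccentric summand. If $\asym V$ is the central factor, then $(\bar{V}_1)_{hc}$ has its single composition factor equal to $\asym V$, so it must be the submodule $\asym V$, which is therefore a direct summand and the sequence splits. If instead $\sym V$ is central and $\asym V$ eccentric, then $(\bar{V}_1)_{hc}$ would have to be an irreducible submodule isomorphic to $\sym V$; but the submodule list contains no such thing, forcing $(\bar{V}_1)_{hc}=0$, hence $\bar{V}_1$ hypereccentric and all its factors eccentric, contradicting the centrality of $\sym V$. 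Either way we contradict non-splitting, so $\sym V$ and $\asym V$ are $\fF$-central or $\fF$-eccentric together, which is the theorem.

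I expect the main obstacle to be the low-dimensional case $\dim H = 1$, which the argument above does not reach: there $\Hom(H,V)\cong V$, the pulled-back (indeed the full Loday--Pirashvili) sequence splits, and in fact a direct cocycle check shows that \emph{every} extension of $\sym V$ by $\asym V$, in either order, splits, so no non-split two-term bimodule is available to feed into Theorem~\ref{th-Fcomp}. For this case I would replace the module extension by a genuinely algebra-theoretic one: build a non-split extension of the primitive algebra ``split extension of $\sym V$ by $H$'' by the ideal $\asym V$, using a two-step nilpotent construction of the kind appearing in the proof of Lemma~\ref{tensor}, so that $\asym V$ lies in the Frattini subalgebra of the larger algebra, and then transfer membership by saturation (Lemma~\ref{satH}); the version with $\sym$ and $\asym$ interchanged supplies the reverse implication. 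Verifying the existence of such a non-split Frattini extension (equivalently, the non-vanishing of the relevant second cohomology) is the delicate point, together with confirming the non-splitting claim used in the generic case.
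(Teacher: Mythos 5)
Your main case ($\dim H \ge 2$) is correct and takes a genuinely different route from the paper, one worth comparing. The paper uses the \emph{full} Loday--Pirashvili sequence $0 \to \asym V \to \bar V \to \Hom(L,V) \to 0$, and therefore has to control the whole quotient $\Hom(L,V)$: it needs Lemma \ref{hom} to see that $\Hom(L,V)$ is $\fF$-hypercentral in one direction, and an external result (\cite[Theorem 2.3]{excentric}) to see it is $\fF$-hypereccentric in the other. By pulling the sequence back along an irreducible $S \cong \sym V$ sitting inside the functionals vanishing on $H'$, you obtain a uniserial length-two bimodule whose only composition factors are $\asym V$ and $\sym V$, and then Theorem \ref{th-Fcomp} alone dispatches both directions symmetrically. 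Your non-splitting claim does check out for $\dim H\ge 2$: writing $S=\{\,x\mapsto\lambda(x)v : v\in V\,\}$ with $\lambda$ killing $H'$, a splitting yields a left-module endomorphism $\theta$ of $V$ with $\lambda(x)v=-T_x\theta(v)$; since $\ker\lambda\supseteq H'$ is a nonzero ideal acting faithfully on the irreducible $V$, one gets $(\ker\lambda)V=V$, hence $\theta=0$ and then $\lambda=0$, a contradiction.

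The genuine gap is the case $\dim H=1$. You correctly diagnose that your pullback (indeed the whole Loday--Pirashvili sequence) splits there, but the replacement you offer --- a non-split Frattini extension of the primitive algebra by $\asym V$ plus an appeal to saturation --- is only a sketch: you yourself flag the existence of that extension (a second-cohomology nonvanishing) as unverified, and it is also not explained how membership of the enlarged algebra in $\fF$ would transfer back to $\fF$-centrality of the \emph{split} extension of $\asym V$ by $H$. The paper's fix is far cheaper and is available to you: replace $H$ by $H\oplus\langle a_1\rangle$ with $a_1$ acting trivially (still in $\fF$, since $\fF$ is a formation containing the one-dimensional algebra, and neither split extension changes), and note that the non-splitting computation does not need faithfulness --- in your setup, choosing $\lambda$ with $\lambda(a_1)\ne 0$ makes the splitting condition $\lambda(a_1)v=-T_{a_1}\theta(v)=0$ immediately absurd. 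As written, however, the one-dimensional case rests on an unproved construction, so the proof is incomplete.
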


\begin{proof}  We may suppose that $V$ is non-trivial.  By replacing $L$ with $L/\ker(V)$, we may suppose that $V$ is faithful and that $L$ is a Lie algebra.  We may suppose that at least one of $\sym V, \asym V$ is $\fF$-central, so $L \in \Lie\fF$.

 We show that there exists $a_0 \in L$ such that the left action $T_{a_0} \co V \to V$ is invertible.  Let $A$ be a minimal ideal of $L$.  For any $a \in A$, $\langle a \rangle \sn L$, so all $\langle a \rangle$-composition factors of $\Lie V$ are isomorphic.  If the action of $a$ on the composition factors is trivial, then $T_a$ is nilpotent.  If all $T_a$ are nilpotent, then by Engel's Theorem, the space $W= \{v \in V \mid av=0 \text{ for all } a \in A\} \ne 0$.  But $W$ is a submodule of the irreducible module $\Lie V$, so $W=V$ contrary to $V$ being faithful.  Therefore there exists $a_0 \in A$ such that the action of $a_0$ on the composition factors is non-trivial.  This implies that $T_{a_0}$ is invertible.

If $\dim(L) = 1$, we replace $L$ with the Lie algebra $L \oplus \langle a_1 \rangle$ and still have $L \in \Lie\fF$.  We then have the conditions for Lemma \ref{LP} which does not require that the module $V$ be faithful.  By Lemma \ref{LP}, the Loday-Pirashvili exact sequence
$$0 \to \asym V \to \bar{V} \to \Hom(L,V) \to 0$$
does not split.  

Suppose that $\sym V$ is $\fF$-central.  Then $\Hom(L, V)$ is $\fF$-hypercentral.  Since $\bar V$ does not split over $\asym V$, by Theorem \ref{th-Fcomp}, $\asym V$ is $\fF$-central.

Now suppose that $\asym V$ is $\fF$-central but that $\sym V$ is $\fF$-eccentric.  We have $L \in \Lie\fF$.  By Barnes \cite[Theorem 2.3]{excentric}, $\Hom(L, \Lie V)$ is $\Lie\fF$-hypereccentric and so is $\fF$-hypereccentric.  But $\asym V$ is $\fF$-central and $\bar V$ does not split over $\asym V$ contrary to Theorem \ref{th-Fcomp}.
\end{proof}

\begin{cor}\label{one-one} $\fF \leftrightarrow \Lie{\fF}$ is a one-to-one correspondence between the Schunck formations of soluble Leibniz algebras and the Schunck formations of soluble Lie algebras.
\end{cor}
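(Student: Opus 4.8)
The plan is to show that the map $\fF \mapsto \Lie{\fF}$ is well-defined, injective, and surjective, using Theorem \ref{pair} as the essential ingredient that links the Lie and Leibniz worlds. Well-definedness was already observed in the paragraph preceding the corollary: if $\fF$ is a Schunck formation of soluble Leibniz algebras, then $\Lie{\fF}$ (the class of Lie algebras in $\fF$) is a Schunck formation of soluble Lie algebras, and its primitives are exactly the Lie-algebra primitives of $\fF$. So the work is in injectivity and surjectivity.

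For injectivity, suppose $\fF_1, \fF_2$ are Schunck formations of soluble Leibniz algebras with $\Lie{\fF_1} = \Lie{\fF_2}$. By Theorem \ref{th-Fchief}, membership in a Schunck formation is determined entirely by which irreducible modules are $\fF$-central, so it suffices to show that $\fF_1$ and $\fF_2$ declare the same irreducible $L$-modules central. For a given soluble Leibniz algebra $L$ and irreducible module $V$, I would pass to $L/\ker(V)$, which is a Lie algebra, and note that every irreducible module of a Lie algebra is either symmetric or antisymmetric. If $V$ is symmetric, then $V = \sym V$ is an honest Lie-module, and its $\fF_i$-centrality is decided inside $\Lie{\fF_i}$, hence agrees for $i=1,2$. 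The antisymmetric case is where Theorem \ref{pair} does the work: by that theorem $\asym V$ is $\fF_i$-central if and only if $\sym V$ is $\fF_i$-central, and the latter is again a question internal to $\Lie{\fF_i}$. Thus centrality of every irreducible module is determined by the Lie part, and $\fF_1 = \fF_2$.

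For surjectivity, I would start from an arbitrary Schunck formation $\fG$ of soluble Lie algebras and construct a preimage. The natural candidate is the class $\fF$ of all soluble Leibniz algebras $L$ such that, for every chief factor $A/B$ of $L$, the corresponding symmetric module $\sym(A/B)$ is $\fG$-central (equivalently $L/\cser_L(A/B)$, together with the symmetric action, lands in $\fG$). Since this is a condition on chief factors, $\fF$ is a formation; and the saturation/Schunck property should follow as in the proof that locally defined classes are Schunck formations (using that $N(L)$ is the intersection of the centralisers and the Frattini-nilpotency result cited from \cite{LeibEngel}). Theorem \ref{pair} again guarantees consistency: the symmetric and antisymmetric versions of each irreducible module are simultaneously central, so declaring centrality via the symmetric representative is unambiguous and closed under the pairing. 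One then checks $\Lie{\fF} = \fG$: a Lie algebra lies in $\fF$ exactly when all its chief factors are $\fG$-central, which by Theorem \ref{th-Fchief} (applied within Lie algebras) is exactly membership in $\fG$.

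The main obstacle I anticipate is surjectivity, specifically verifying that the constructed $\fF$ is genuinely a Schunck formation rather than merely a formation, and that it is the \emph{unique} such preimage consistent with the pairing. The formation property is immediate from its definition by chief factors, but saturation requires the Frattini argument, and one must be careful that the definition of $\fF$-centrality via $\sym(A/B)$ interacts correctly with extensions by the Frattini ideal. Theorem \ref{pair} is precisely what removes the potential ambiguity between the two members of each primitive pair, so the proof hinges on invoking it at exactly the right points; with that in hand the remaining verifications are parallel to the Lie-algebra case and to the earlier theorems on locally defined classes.
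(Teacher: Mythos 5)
The paper offers no written proof of this corollary: it is presented as an immediate consequence of Theorem \ref{pair}, the point being that, by Theorem \ref{th-Fchief}, a Schunck formation is determined by which irreducible modules it declares central, and Theorem \ref{pair} shows that this is determined by the symmetric (Lie) modules alone. Your well-definedness and injectivity arguments are exactly this and are correct; note also that uniqueness of the preimage, which you list as a worry under surjectivity, already follows from injectivity and needs no separate treatment.

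The genuine gap is in surjectivity, at the step you yourself flag. Given a Schunck formation $\f{G}$ of soluble Lie algebras, you define $\fF$ as the class of Leibniz algebras all of whose chief factors $A/B$ have $\sym(A/B)$ $\f{G}$-central, and you assert that saturation ``follows as in the proof that locally defined classes are Schunck formations.'' That proof does not transfer. It rests entirely on the equivalence, via Lemma \ref{nrad}, of the local condition with $L/N(L) \in \f{K}$, so that a Frattini extension changes neither the nil radical nor the quotient by it. Your class is not of this form for a general $\f{G}$: $\f{G}$-centrality of a chief factor depends on the module structure of $A/B$, not merely on the isomorphism type of the algebra $L/\cser_L(A/B)$. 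What must actually be proved is that if $K$ is a minimal ideal of $L$ with $K \subseteq \Phi(L)$ and $L/K \in \fF$, then the split extension of $\sym K$ by the Lie algebra $L/\cser_L(K)$ lies in $\f{G}$; knowing only that $L/\cser_L(K) \in \f{G}$ is not enough. This is the one point in the corollary where a substantive new argument is required --- for Lie algebras it is where a Gasch\"utz-type argument, or the non-splitting of Frattini extensions over $\f{G}$-hypereccentric modules (as in \cite{HyperC}, \cite{excentric}), is invoked --- and it is precisely the step your proposal leaves unproved. Everything else, including the identification $\Lie\fF = \f{G}$ via Theorem \ref{th-Fchief} applied within Lie algebras and the formation property of the chief-factor-defined class, is in order.
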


\begin{theorem} Let $\fF$ be a Schunck formation of soluble Leibniz algebras.  Let $V, W$ be $L$-modules, either both symmetric or both antisymmetric.  Suppose that $V$ is $\fF$-hypercentral and that $W$ is $\fF$-hypereccentric.  Then $V \otimes W$ and $\Hom(V,W)$ are $\fF$-hypereccentric.
\end{theorem}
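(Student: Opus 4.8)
The plan is to reduce first to the case where $V$ and $W$ are irreducible, and then to the Lie setting. Taking composition series of $V$ and $W$ and using that tensoring is exact, $V\otimes W$ acquires a filtration whose factors are the $(V_i/V_{i-1})\otimes(W_j/W_{j-1})$, so every composition factor of $V\otimes W$ is a composition factor of some $\bar V\otimes\bar W$ with $\bar V,\bar W$ irreducible; as $V$ is $\fF$-hypercentral each $\bar V$ is $\fF$-central, and as $W$ is $\fF$-hypereccentric each $\bar W$ is $\fF$-eccentric, so it suffices to treat irreducible $V$ (central) and $W$ (eccentric). The symmetric and antisymmetric cases are unified by Theorem~\ref{pair}: the left action on $V\otimes W$ is the same in either case, and by Theorem~\ref{pair} the $\fF$-centrality of an irreducible depends only on its underlying left-module structure, not on whether it is taken as $\sym{}$ or $\asym{}$. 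Hence I may assume both $V$ and $W$ symmetric and, factoring out $\cser_L(V)\cap\cser_L(W)$ (a Lie ideal, since each $L/\cser_L(V)$ is a Lie algebra by Lemma~\ref{irred} and $L/(\cser_L(V)\cap\cser_L(W))$ embeds in the product), work over a Lie algebra with the Schunck formation $\Lie\fF$, where duals of modules are available without difficulty.

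The engine is Lemma~\ref{tensor} combined with tensor--Hom adjunction. In the Lie setting $V^*$ is again $\Lie\fF$-hypercentral (stability of centrality under the contragredient, established within the Lie-algebra theory), so by Lemma~\ref{tensor} the module $V^*\otimes U$ is $\fF$-hypercentral for every $\fF$-central irreducible $U$. Suppose $V\otimes W$ had an $\fF$-central composition factor $U$ in its socle, $U\hookrightarrow V\otimes W$. Then $\Hom_L(U,V\otimes W)\cong\Hom_L(V^*\otimes U,W)\neq 0$, so the irreducible $W$ is a quotient, hence a composition factor, of the $\fF$-hypercentral module $V^*\otimes U$; thus $W$ would be $\fF$-central, contradicting $W$ $\fF$-eccentric. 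The dual adjunction $\Hom_L(V\otimes W,U)\cong\Hom_L(W,V^*\otimes U)$ disposes of an $\fF$-central factor in the head. Consequently $V\otimes W$ has neither an $\fF$-hypercentral submodule nor an $\fF$-hypercentral quotient.

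The main obstacle is to upgrade this to the full conclusion, ruling out $\fF$-central factors buried in the middle of a composition series of $V\otimes W$. Here I would apply the exact functor $V^*\otimes(-)$, obtaining $V^*\otimes V\otimes W\cong\End(V)\otimes W$ with $\End(V)$ $\fF$-hypercentral by Lemma~\ref{tensor}; the trivial composition factor of $\End(V)$ reproduces $W$ itself, and an induction (on $\dim V$, splitting this factor off) then forces each $\fF$-central factor of $V\otimes W$ to yield an $\fF$-central factor of $W$, the desired contradiction. This inductive step is exactly the content of the corresponding Lie-algebra theorem, so in keeping with the treatment of Theorems~\ref{th-Fcomp} and~\ref{pair} I would finish by the reduction to $\Lie\fF$ above and invoke Barnes \cite{excentric}. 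Finally, the assertion for $\Hom(V,W)$ follows from the tensor case through the isomorphism $\Hom(V,W)\cong V^*\otimes W$, again using that $V^*$ is $\fF$-hypercentral.
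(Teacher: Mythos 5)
Your proposal is correct and follows essentially the same route as the paper: reduce to irreducible modules, pass to the Lie-algebra quotient, use Theorem~\ref{pair} to assume both modules symmetric, and then invoke Barnes \cite[Theorem 2.3]{excentric} for the Lie case. The intermediate tensor--Hom adjunction discussion is an (incomplete) sketch of that cited Lie-algebra theorem and is not needed, since your final step defers to the citation exactly as the paper does.
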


\begin{proof}  We need only consider the case where both $V$ and $W$ are irreducible.    By working with $L/(\ker V \cap \ker W)$, we may suppose that $L$ is a Lie algebra.  By Theorem \ref{pair}, we need only consider the case where both $V$ and $W$ are symmetric.  Then the split extension of $V$ by $L$ is a Lie algebra in $\Lie \fF$.  The split extension of $W$ by $L$ is a lie algebra not in $\fF$, so not in $\Lie\fF$.  By Barnes \cite[Theorem 2.3]{excentric}, $V \otimes W$ and $\Hom(V,W)$ are $\Lie\fF$-hypereccentric and it follows that $V \otimes W$ and $\Hom(V,W)$ are  $\fF$-hypereccentric.
\end{proof}

We can now strengthen Theorem \ref{th-Fcomp}.

\begin{theorem} \label{InvComps}  Let $\fF$ be a Schunck formation of soluble Leibniz algebras.  Let $L$ be a (not necessarily soluble) Leibniz algebra and suppose that $U \sn L$, $U \in \fF$.  Let $V$ be an $L$-module.  Let $V^+$ and $V^-$ be the $\fF$-hypercentral and $\fF$-hypereccentric components of $V$ as $U$-module.  Then $V^+, V^-$ are $L$-submodules of $V$.
\end{theorem}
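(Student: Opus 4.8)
The plan is to show that $V^+$ and $V^-$ are \emph{canonically} attached to the $U$-module $V$, and then to promote invariance under the normaliser of $U$ to invariance under all of $L$ by an induction on $\dim V$, isolating a single irreducible extension problem at the end. First I would record canonicity. Since $U\in\fF$, Theorem~\ref{th-Fcomp} gives $V=V^+\oplus V^-$ as $U$-modules. The composition factors of a sum of $\fF$-hypercentral $U$-submodules lie among those of the summands, so such a sum is again $\fF$-hypercentral; and no nonzero module is simultaneously $\fF$-hypercentral and $\fF$-hypereccentric, since an irreducible $U$-module is $\fF$-central or $\fF$-eccentric but not both. Hence $V^+$ is the unique largest $\fF$-hypercentral $U$-submodule of $V$ and $V^-$ the unique largest $\fF$-hypereccentric one. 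Both are therefore determined by the $U$-module structure alone, and it suffices to prove that each is an $L$-submodule.

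Next I would treat the normaliser. Writing $T_x\co v\mapsto xv$ and $S_x\co v\mapsto vx$, the Leibniz identities in the split extension $X$ of $V$ by $L$ give, for $x\in\nser_L(U)$ and $u\in U$, relations such as $x(uv)=(xu)v+u(xv)$ with $xu\in U$. These say exactly that $v\mapsto xv+V^+$ and $v\mapsto vx+V^+$ are $U$-module homomorphisms from $V^+$ into $(V^+ + xV^+ + V^+x)/V^+$. The target is thus a homomorphic image of the $\fF$-hypercentral module $V^+$, hence $\fF$-hypercentral, whence $V^+ + xV^+ + V^+x$ is $\fF$-hypercentral and, by maximality, equals $V^+$. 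So $T_x$ and $S_x$ preserve $V^+$, and likewise $V^-$, for every $x\in\nser_L(U)$. In particular, if $U=U_0\id U_1\id\dots\id U_n=L$ realises $U\sn L$, then $U_1\subseteq\nser_L(U)$ already preserves $V^\pm$.

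Now I would induct on $\dim V$ to climb to $L$. Choose a minimal $L$-submodule $A$. The image of $V^+$ in $V/A$ is $\fF$-hypercentral, hence lies in $(V/A)^+$, so the pre-image $P^+\subseteq V$ of $(V/A)^+$ is an $L$-submodule containing $V^+$, and $V^+$ is then the largest $\fF$-hypercentral $U$-submodule of $P^+$ as well. When $V/A$ is not $\fF$-hypercentral we have $P^+\subsetneq V$, while $(V/A)^+$ is $L$-invariant by the induction applied to $V/A$; so $V^+$ is $L$-invariant by the induction applied to $P^+$. The symmetric argument handles $V^-$. This reduces everything to the monolithic case, in which $V$ has a unique minimal $L$-submodule $A$, one of the components coincides with $A$ and is $L$-irreducible, and the quotient by $A$ is entirely of the opposite type.

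\textbf{Main obstacle.} The remaining case is genuinely the hard one, because elements of $L$ outside $\nser_L(U)$ do not respect the $U$-module structure, so the ``largest $\fF$-hypercentral'' description cannot simply be iterated up the chain $U_1\id\dots\id L$; all we know so far is that the relevant extension splits over $U_1$. Concretely, with say $V^+=A$ an $L$-irreducible $\fF$-hypercentral submodule and $V/A$ $\fF$-hypereccentric, proving that $V^-$ is $L$-invariant is equivalent to splitting the sequence $0\to A\to V\to V/A\to 0$ of $L$-modules with the canonical hypereccentric complement. The Loday--Pirashvili sequence (Lemma~\ref{LP}) shows such sequences need not split in general, so the splitting must be forced from the hypothesis $U\in\fF$. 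Here I would invoke the Leibniz-specific input: by Lemma~\ref{irred}, $A$ is symmetric or antisymmetric and $L/\ker(A)$ is a Lie algebra, and by Theorem~\ref{pair} the $\fF$-central/eccentric status of a module and of its symmetric--antisymmetric partner agree. This lets me replace the right action by minus the left action (or by zero) and reduce the splitting to the corresponding invariance theorem for soluble Lie algebras of Barnes, \cite{HyperC, excentric}; Theorem~\ref{pair} is precisely what rules out the nonsplit configuration and forces $V^-$ to be an $L$-submodule.
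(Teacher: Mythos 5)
There is a genuine gap, and it comes from abandoning the chain induction too early. You assert that ``the largest $\fF$-hypercentral description cannot simply be iterated up the chain $U_1\id\dots\id L$,'' but it can, and this is exactly the paper's proof. The point you miss is that one never needs $x\in L$ to normalise $U$: one inducts on the length $n$ of the chain $U=U_0\id U_1\id\dots\id U_n=L$, assumes $W=V^{\pm}$ is already a $U_{n-1}$-submodule, and then for \emph{arbitrary} $x\in L$ and $u\in U$ observes that $ux$ and $xu$ lie in the ideal $U_{n-1}$ of $L$, so $(ux)w$ and $w(xu)$ lie in $W$ by the inductive hypothesis. That is all that is needed to make $W+xW$ a $U$-submodule and $w\mapsto xw+W$ a $U$-bimodule homomorphism from $W$ into a subquotient of $V/W$; since $W$ and $V/W$ share no composition factors, this map vanishes and $xW\subseteq W$, and a second, similar computation handles $Wx$. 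Your replacement strategy --- induction on $\dim V$ reducing to a monolithic extension $0\to A\to V\to V/A\to 0$ --- correctly isolates a splitting problem but then does not solve it: the final paragraph only names tools (Lemma~\ref{irred}, Theorem~\ref{pair}, the Lie-algebra results of \cite{HyperC,excentric}) without an argument. In particular, Theorem~\ref{pair} compares the $\fF$-centrality of $\sym V$ and $\asym V$; it says nothing about splitting of $L$-module extensions, and the proposed reduction to Lie algebras is not available because $\ker(A)$ need not act trivially on $V/A$, so $V$ is not a module for the Lie algebra $L/\ker(A)$.

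Two smaller points. First, even in your normaliser step the map $v\mapsto vx+V^+$ is \emph{not} a $U$-bimodule homomorphism: computing $(vx)u=v(xu)-x(vu)$ shows the right $U$-action on $(V^+x+V^+)/V^+$ is zero, so the composition factors of the image are the $\asym$-twists of those of $V^+$, not homomorphic images of them. This is precisely where Theorem~\ref{pair} is needed (to know that $\asym X$ is $\fF$-central if and only if $X$ is), and it is the one genuinely Leibniz-specific ingredient in the correct proof; you invoke Theorem~\ref{pair} only in the endgame, where it does not do the job asked of it. Second, your reduction to the monolithic case tacitly needs that an $L$-irreducible submodule is either entirely $\fF$-hypercentral or entirely $\fF$-hypereccentric as a $U$-module, which requires the fact (Barnes, \emph{Subnormal subalgebras of Leibniz algebras}, Theorem 2.2) that all $U$-composition factors of an irreducible $L$-module are isomorphic when $U\sn L$; this should be cited if that route were pursued.
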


\begin{proof}   Suppose $U=U_0 \id U_1 \id \dots \id U_n = L$.  By Theorem \ref{th-Fcomp}, the result holds if $n = 0$.  We use induction over $n$, so we suppose $V^+, V^-$ are $U_{n-1}$-submodules.  Let $W$ be either of $V^+, V^-$ and let $x \in L$.  For $u \in U$, $ux, xu \in U_{n-1}$.  Thus for $w \in W$, we have $u(xw) = (ux)w + x(uw) \in W + xW$ and $(xw)u = x(wu) - w(xu) \in xW + W$.  Thus $W+xW$ is a $U$-submodule.  The map $\phi:W \to W+xW/W$ given by $\phi(w) = xw +W$ is a $U$-module homomorphism as
\begin{equation*}\begin{split} u \phi(w) &= u(xw)+W = (ux)w +x(uw)+W =\phi(uw),\\
\phi(w)u &= (xw)u +W = x(wu) - w(xu) +W = \phi(wu),
\end{split}\end{equation*}
since $(ux)w, w(xu) \in W$.  But $W$ and $V/W$ have no $U$-bimodule composition factors in common.  Therefore $\phi = 0$ and $xW \subseteq W$.

We now consider $Wx+W$.  We have $u(wx) = (uw)x + w(ux) \in Wx+W$ and $(wx)u = w(xu) - x(wu) \in W$ since $xW \subseteq W$.  Thus $W+Wx$ is a $U$-submodule and the right action of $U$ on $W+Wx/W$ is zero.  The map $\psi: W \to Wx+W/W$ given by $\psi(w) = wx + W$ is a left $U$-module homomorphism as $u \psi(w) = u(wx) +W = (uw)x + w(ux)+W = \psi(uw)$.  Thus any $U$-module composition factor of $Wx+W/W$ is isomorphic to $\asym{X}$ for some $U$-module composition factor $X$ of $W$.  But $\asym{X}$ is $\fF$-central if and only if $X$ is $\fF$-central.  Since $W$ is one of the components $V^+, V^-$, it follows that $W+Wx/W = 0$ and $Wx \subseteq W$.
\end{proof}

\section{$\fF$-normalisers}
Let $\fF$ be a Schunck formation of soluble Leibniz algebras.  We cannot follow the definition used in the theory of soluble groups as the required structures do not exist in Leibniz or Lie algebras.  In \cite{Stitz-CA}, Stitzinger defined $\fF$-normalisers for Lie algebras using a property proved for those of groups.  I copy that approach with only minor changes arising from the need to use $\Psi(L)$ in place of $\Phi(L)$ at some points.  The following two results are essentially Proposition 10 and Theorem 3 of Stitzinger \cite{Stitz-Frat}.  Note that assuming $\Psi(L)=0$ is weaker than assuming $\Phi(L)=0$.

\begin{lemma} \label{abcomp}Let $L$ be a soluble Leibniz algebra with $\Psi(L)=0$.  Let $A$ be an abelian ideal of $L$.  Then $A$ is complemented in $L$.
\end{lemma}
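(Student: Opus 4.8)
The plan is to argue by induction on $\dim L$, reducing to the case where $A$ is a minimal ideal. The role of the hypothesis $\Psi(L)=0$ is that no nonzero ideal can lie inside $\Phi(L)$: if $B\ideq L$ and $B\subseteq\Phi(L)$, then $B\subseteq\Psi(L)=0$. In particular every nonzero minimal ideal escapes $\Phi(L)$ and is therefore complemented by a maximal subalgebra. The reduction to the minimal case will be driven by the observation (established below) that intersecting $A$ with such a complement produces a strictly smaller abelian ideal of $L$ to which induction applies.

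First I would dispose of the minimal case. Suppose $A$ is a minimal ideal, hence irreducible as a bimodule and abelian. Since $A\ne 0$ and $\Psi(L)=0$, we have $A\not\subseteq\Phi(L)$, so there is a maximal subalgebra $M$ with $A\not\subseteq M$, and maximality gives $M+A=L$. Now $M\cap A$ is carried into itself by $M$ (as $A\ideq L$ and $M$ is a subalgebra) and is annihilated on both sides by $A$ (as $A$ is abelian), so $L=M+A$ maps $M\cap A$ into itself on both sides; thus $M\cap A\ideq L$. By irreducibility of $A$ and $A\not\subseteq M$ we get $M\cap A=0$, so $M$ complements $A$.

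For the reduction, choose a minimal ideal $B\subseteq A$ and a maximal subalgebra $M$ complementing it, $L=M\oplus B$. Put $A_1=A\cap M$; then $A=B\oplus A_1$ by the modular law. The key computation is that $A_1\ideq L$: writing $L=M+B$ we get $LA_1=MA_1+BA_1\subseteq(M\cap A)+AA=A_1$, and likewise $A_1L\subseteq A_1$, using that $A$ is abelian and $B,A_1\subseteq A$. Thus $A_1$ is an abelian ideal of $L$ with $\dim A_1<\dim A$, and complementing $A$ in $L$ is equivalent to complementing $A_1$ inside $M$: if $M=P\oplus A_1$, then $L=P\oplus A$, since $P\cap A\subseteq P\cap A_1=0$.

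The main obstacle is precisely this last step: to complement $A_1$ in $M\cong L/B$ by induction one would want $\Psi(M)=0$, but $\Psi$ need not pass to quotients or complements, and indeed $\Phi(L/B)$ can strictly exceed $(\Phi(L)+B)/B$. I would resolve this in the manner of Stitzinger, by establishing the structural facts that let the induction proceed without controlling $\Psi$ of subquotients: for a soluble $L$ with $\Psi(L)=0$, the nilradical $N(L)$ (which, being the sum of all nilpotent ideals, contains every abelian ideal) is abelian, is completely reducible as an $L$-bimodule, and is complemented in $L$. Granting this, since $A$ is a sub-bimodule of the completely reducible $N(L)$ we split $N(L)=A\oplus A'$ as bimodules, pick a subalgebra $C$ with $L=C\oplus N(L)$, and verify that $C+A'$ is a subalgebra complementing $A$. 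The real work, and the place where $\Psi(L)=0$ is genuinely used, lies in proving complete reducibility and complementation of $N(L)$; here the complementation of the successive minimal ideals obtained in the first two steps is glued together using the conjugacy of complements to an abelian ideal furnished by Lemma \ref{conj}.
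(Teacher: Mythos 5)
Your minimal-ideal case and your verification that $A_1=A\cap M$ is an abelian ideal of $L$ with $A=B\oplus A_1$ are both correct, and they match the opening moves of the paper's proof. But the proposal does not actually complete the inductive step: you correctly observe that complementing $A_1$ inside $M\cong L/B$ is blocked because $\Psi$ need not vanish on quotients or on maximal subalgebras, and you then escape by invoking complete reducibility and complementation of $N(L)$ --- facts you leave unproved and explicitly label as ``the real work.'' That is where the gap lies, and it is not a harmless deferral: in the paper those facts are Theorem \ref{psi0}, which is proved \emph{from} Lemma \ref{abcomp}, so your route is circular as a way of establishing this lemma, and proving them independently would require essentially the argument you are missing.

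The idea you need is to induct on $\dim(A)$ while keeping the ambient algebra $L$ fixed, so that the hypothesis $\Psi(L)=0$ never has to be transported anywhere. Take a maximal subalgebra $M$ with $A\not\subseteq M$ and set $B=A\cap M$, which (by your own computation) is an abelian ideal of $L$ with $\dim B<\dim A$. The inductive hypothesis, applied to $B$ \emph{as an ideal of $L$ itself}, gives a subalgebra $C$ with $L=B\oplus C$. Now put $D=M\cap C$: this is a subalgebra, $D\cap A=(C\cap M)\cap A=C\cap B=0$, and by the modular law (using $B\subseteq M$) one gets $M=(B+C)\cap M=B+D$, whence $A+D=A+B+D=A+M=L$. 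So $D$ complements $A$, and no control of $\Psi$ on subquotients is ever needed.
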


\begin{proof}  There exists a maximal subalgebra $M$ of $L$ which does not contain $A$.  Then $B = A\cap M \id L$. If $A$ is a minimal ideal, the result holds.  So suppose that $B \ne 0$.  We use induction over $\dim(A)$.  By induction, there exists a subalgebra $C$ which complements $B$.  Put $D = M \cap C$.  Then 
$$D \cap A = (C \cap M )\cap A = C \cap B = 0.$$
By the modular law for subspaces, since $B \subseteq M$, we have $(B+C)\cap M = B+(C\cap M)$.  Thus
$$M = L \cap M = (B+C) \cap M = B+(C\cap M) = B+D$$
and $A + D = A + (B+D) =A+M = L$.
\end{proof}

\begin{theorem}\label{psi0} Let $L$ be a soluble Leibniz algebra with $\Psi(L)=0$.  Then
$$\soc(L) = \cser_L(\soc(L)) = N(L)
$$ and is complemented in $L$.
\end{theorem}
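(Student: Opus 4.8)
The plan is to establish the chain $\soc(L) = N(L) = \cser_L(\soc(L))$ and then read off the complementation from Lemma \ref{abcomp}. First I would record the easy inclusions. As $L$ is soluble, every minimal ideal is abelian, and for distinct minimal ideals $A_i, A_j$ we have $A_iA_j \subseteq A_i \cap A_j = 0$; hence $\soc(L)$, a sum of such ideals, satisfies $\soc(L)^2 = 0$ and is abelian. In particular $\soc(L) \subseteq \cser_L(\soc(L))$, and being an abelian (hence nilpotent) ideal, $\soc(L) \subseteq N(L)$. By Lemma \ref{nrad}, $N(L)$ centralises every chief factor, in particular every minimal ideal, so $N(L) \subseteq \cser_L(\soc(L))$ as well.

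The crux is to show that $N := N(L)$ is abelian, and this is where $\Psi(L) = 0$ enters through Lemma \ref{abcomp}. Suppose $N^2 \neq 0$. By Corollary \ref{powers}, $N^2 \ideq L$, so it contains a minimal ideal $A$ of $L$. Since $A$ is abelian, Lemma \ref{abcomp} supplies a maximal subalgebra $M$ complementing $A$, so $A \cap M = 0$ and $A + M = L$. Because $N$ is nilpotent and $A \subseteq N$ is irreducible, the left action of $N$ on $A$ is nilpotent (an iterated left product $n_1(n_2(\cdots(n_k a)))$ with $a \in A \subseteq N$ lies in $N^{k+1}$), so the fixed space $\{a \in A \mid Na = 0\}$ is a nonzero $L$-submodule of $A$, hence all of $A$; thus $NA = 0$, and by Lemma \ref{irred} (whether $A$ is symmetric or antisymmetric) also $AN = 0$. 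Writing $N = A + (N \cap M)$ by the modular law and expanding $N^2$, every term meeting $A$ vanishes since $A^2 = 0$ and $AN = NA = 0$, leaving $N^2 \subseteq N \cap M \subseteq M$. But then $A \subseteq N^2 \subseteq M$, contradicting $A \cap M = 0$. Hence $N^2 = 0$.

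With $N$ abelian I would finish quickly. Complement $\soc(L)$ in $L$ by a subalgebra $R$ using Lemma \ref{abcomp}. For $w \in N \cap R$ and $x = s + r$ with $s \in \soc(L)$, $r \in R$, abelianness of $N$ gives $sw, ws \in N^2 = 0$, while $rw, wr \in R \cap N$; thus $N \cap R \ideq L$. A nonzero such ideal would contain a minimal ideal, necessarily lying in $\soc(L) \cap R = 0$, so $N \cap R = 0$ and $N = \soc(L) + (N \cap R) = \soc(L)$. Finally, if $\cser_L(\soc(L)) \neq \soc(L)$, choose $D$ with $\soc(L) \subsetneq D \subseteq \cser_L(\soc(L))$ and $D/\soc(L)$ a chief factor; then $D^2 \subseteq \soc(L)$ and $D\cdot\soc(L) = \soc(L)\cdot D = 0$ give $D^3 = 0$, so $D$ is a nilpotent ideal and $D \subseteq N(L) = \soc(L)$, a contradiction. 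Hence all three subalgebras coincide, and being an abelian ideal the common value is complemented in $L$ by Lemma \ref{abcomp}. The main obstacle is precisely the abelianness of $N(L)$: the naive route via $N^2 \subseteq \Phi(L)$ is awkward because $\Phi(L)$ need not be an ideal for Leibniz algebras, which is why I instead exploit the complementation of a minimal ideal inside $N^2$ together with the triviality of the action of the nilpotent ideal $N$ on it.
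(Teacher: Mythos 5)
Your proof is correct, but it takes a noticeably longer route than the paper's. The paper's argument is a single stroke: since $S=\soc(L)$ is an abelian ideal, Lemma \ref{abcomp} gives a complement $C$; setting $D=C\cap\cser_L(S)$, the modular law gives $\cser_L(S)=S+D$, one checks $D\id L$ (writing $x=s+c$ and using $sD=Ds=0$, so $xD, Dx\subseteq C\cap\cser_L(S)$), and a nonzero ideal $D$ would contain a minimal ideal of $L$ inside $C$, contradicting $C\cap\soc(L)=0$; hence $\cser_L(S)=S$, and then $S\subseteq N(L)\subseteq\cser_L(S)$ (the second inclusion from Lemma \ref{nrad}) closes the chain, with $C$ the required complement. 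You instead first prove $N(L)^2=0$ by a separate contradiction argument (complementing a minimal ideal $A\subseteq N^2$, showing $NA=AN=0$ via nilpotency of the left action and the symmetric/antisymmetric dichotomy of Lemma \ref{irred}, and deriving $A\subseteq N^2\subseteq M$), then show $N\cap R=0$ for a complement $R$ of $\soc(L)$, and finally handle the centraliser by a third argument producing a nilpotent ideal $D$ with $D^3=0$. All three steps check out — in particular your verification that the fixed space $\{a\in A\mid Na=0\}$ is a right submodule does need the symmetric/antisymmetric case split you invoke, and your uses of the modular law and of Corollary \ref{powers} for $N^2\id L$ are sound. What your version buys is the abelianness of $N(L)$ as an explicit intermediate fact (and a self-contained proof that a nilpotent ideal acts nilpotently on a minimal ideal it contains); what the paper's version buys is brevity, since $N(L)=\soc(L)$ delivers $N(L)^2=0$ for free at the end.
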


\begin{proof}  Put $S = \soc(L)$.  By Lemma \ref{abcomp}, there exists a complement $C$ to $S$.  Put $D = C \cap \cser_L(S)$.  Then $D \id L$.  If $D \ne 0$, then $D$ contains a minimal ideal of $L$ contrary to $C \cap \soc(L) = 0$.  Therefore $\cser_L(S) = S$.  But $N(L)$ centralises all chief factors, so $N(L) \subseteq \cser_L(S)$.
\end{proof}

In the following, $\fF$ is a Schunck formation of soluble Leibniz algebras.

\begin{definition}    A maximal subalgebra $M$ of a soluble Leibniz algebra is called $\fF$-{\em normal} in $L$ if $L/\core_L(M) \in \fF$, and $\fF$-{\em abnormal} otherwise.
\end{definition}

\begin{lemma} A maximal subalgebra $M$ of $L$ is $\fF$-normal if and only if it complements an $\fF$-central chief factor of $L$. \qed
\end{lemma}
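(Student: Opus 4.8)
Throughout put $C=\core_L(M)$, so that by definition $M$ is $\fF$-normal exactly when $L/C\in\fF$. The whole argument runs through the primitive quotient $L/C$. The structural input I would use is the standard fact (as for soluble Lie algebras and groups) that, since $L$ is soluble and $M$ is maximal with core $C$, the algebra $L/C$ is primitive: it has a unique minimal ideal $A/C=\soc(L/C)$, this socle is self-centralising, i.e. $\cser_{L/C}(A/C)=A/C$, and $M/C$ complements it. Granting this, the theorem becomes a matter of matching up the complemented chief factor with the socle and reading off $\fF$-centrality from the splitting.

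\emph{Forward direction.} Suppose $M$ is $\fF$-normal, so $L/C\in\fF$. By the structural fact, $M/C$ complements $A/C=\soc(L/C)$, so lifting to $L$ we have $A+M=L$ and $A\cap M=C$; thus $M$ complements the chief factor $A/C$. It remains to see that $A/C$ is $\fF$-central. Since $\cser_{L/C}(A/C)=A/C$, the split extension of $A/C$ by $(L/C)/\cser_{L/C}(A/C)=(L/C)/(A/C)=L/A$ is precisely $L/C$ (the extension splits because $M/C$ complements the socle, and a split extension of an abelian bimodule is unique up to isomorphism). As $L/C\in\fF$, the factor $A/C$ is $\fF$-central, and $M$ complements it.

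\emph{Backward direction.} Suppose $M$ complements an $\fF$-central chief factor $A/B$, i.e. $B\subseteq M$, $A+M=L$, $A\cap M=B$. Since $B\id L$ and $B\subseteq M$ we get $B\subseteq C$, and then $A\cap C\subseteq A\cap M=B\subseteq A\cap C$, so $A\cap C=B$. The natural $L$-equivariant map $a+B\mapsto a+C$ is therefore a bimodule isomorphism $A/B\cong (A+C)/C$, exhibiting $(A+C)/C$ as a minimal (hence, in the primitive algebra $L/C$, the unique minimal) ideal; thus $(A+C)/C=\soc(L/C)$ and $\soc(L/C)$ is $\fF$-central. Using $\cser_{L/C}(\soc(L/C))=\soc(L/C)$ again, $\fF$-centrality says the split extension of $\soc(L/C)$ by $(L/C)/\soc(L/C)$ lies in $\fF$; but that split extension is $L/C$ itself (it splits via $M/C$). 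Hence $L/C\in\fF$, i.e. $M$ is $\fF$-normal.

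\emph{Main obstacle.} The routine algebra (the identity $A\cap C=B$ and the bimodule isomorphism $A/B\cong(A+C)/C$) is easy; the real content I must supply or invoke is the two structural properties of the primitive quotient $L/C$, namely monolithicity (unique minimal ideal, so that the complemented chief factor is forced to be the socle) and self-centrality of the socle (so that the defining split extension for $\fF$-centrality is exactly $L/C$). Once these are available, both implications collapse to the single observation that $L/C$ is the split extension realising the $\fF$-centrality of $\soc(L/C)$. This is the Leibniz transcription of the corresponding Lie-algebra statement, and the symmetric/antisymmetric type of the factor is irrelevant here since the bimodule isomorphism preserves it.
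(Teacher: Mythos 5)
Your argument is correct and is exactly the standard one the paper intends: the \qedsymbol\ after the statement signals that the proof is the usual Lie-algebra argument via the primitive quotient $L/\core_L(M)$, which is what you give. The structural facts you isolate (monolithicity and self-centrality of the socle of the primitive quotient, hence the identification of $L/\core_L(M)$ with the split extension defining $\fF$-centrality) do carry over verbatim to soluble Leibniz algebras and are used throughout the paper (e.g.\ Lemma \ref{primcov} and Theorem \ref{psi0}), so invoking them is legitimate.
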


\begin{lemma}  $L \in \fF$ if and only if every minimal ideal of $L/ \Psi(L)$ is $\fF$-central. \qed
\end{lemma}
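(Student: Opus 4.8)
The plan is to prove the two implications separately, the forward one being routine and the converse carrying the real content. For the forward implication, suppose $L \in \fF$. Since $\fF$ is a homomorph it is $\quot$-closed, so $L/\Psi(L) \in \fF$, and then Theorem~\ref{th-Fchief} says every chief factor of $L/\Psi(L)$ is $\fF$-central. A minimal ideal $A$ of $L/\Psi(L)$ is in particular a chief factor (namely $A/0$), hence $\fF$-central; this settles the ``only if'' direction.

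For the converse, assume every minimal ideal of $L/\Psi(L)$ is $\fF$-central. Because $\fF$ is a Schunck class, Lemma~\ref{satH}(2) reduces the problem to the case $\Psi(L)=0$: replacing $L$ by $L/\Psi(L)$ costs nothing, since $\Psi(L/\Psi(L))=0$ (the largest ideal of $L/\Psi(L)$ inside $\Phi(L)/\Psi(L)$ is trivial, by maximality of $\Psi(L)$), and proving $L/\Psi(L)\in\fF$ yields $L\in\fF$. So I may assume $\Psi(L)=0$ and must show $L\in\fF$. By Theorem~\ref{psi0}, now $N:=\soc(L)=N(L)=\cser_L(\soc(L))$; in particular $N$ is abelian and is complemented by a subalgebra $C\cong L/N$. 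Being a socle, $N$ is semisimple as $L$-bimodule, so $N=A_1\oplus\dots\oplus A_m$ with each $A_i$ a minimal ideal, and each $A_i$ is $\fF$-central by hypothesis.

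The key step is to deduce $C\cong L/N\in\fF$, and here I exploit $\cser_L(N)=N$. Since $C$ complements $N$ we get $\cser_C(N)=\cser_L(N)\cap C=N\cap C=0$, and as $c$ centralises every $A_i$ exactly when it centralises $N=\sum_i A_i$, this gives $\bigcap_i \cser_C(A_i)=\cser_C(N)=0$. Thus $C$ is a subdirect sum of the quotients $C/\cser_C(A_i)\cong L/\cser_L(A_i)$ (the isomorphism because $N\subseteq\cser_L(A_i)$, $N$ being abelian). Each such quotient lies in $\fF$: it is the quotient by the ideal $A_i$ of the split extension of $A_i$ by $L/\cser_L(A_i)$, and that split extension is in $\fF$ precisely because $A_i$ is $\fF$-central. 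Since $\fF$ is a formation it is $\sdir$-closed, so $C\in\fF$.

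It then remains to assemble the chief factors and apply Theorem~\ref{th-Fchief}. Refine $0\subset N\subset L$ to a chief series of $L$. Every factor below $N$ is a simple $L$-sub-bimodule of the semisimple module $N$, hence isomorphic as $L$-bimodule to some $A_i$ and therefore $\fF$-central. Every factor above $N$ is a chief factor of $L/N\cong C\in\fF$; since $N=N(L)$ centralises all chief factors (Lemma~\ref{nrad}), $\fF$-centrality of such a factor is computed identically in $L$ and in $C$, so Theorem~\ref{th-Fchief} applied to $C$ makes it $\fF$-central. Hence every chief factor of $L$ is $\fF$-central, and Theorem~\ref{th-Fchief} gives $L\in\fF$. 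I expect the main obstacle to be exactly the key step: recognising that the self-centralising socle $\cser_L(N)=N$—available only after the reduction to $\Psi(L)=0$—is what converts the hypothesis on the individual minimal ideals into membership of $L/N$ in $\fF$, through the subdirect decomposition and the formation property. The surrounding bookkeeping (the $\Psi$-reduction, semisimplicity of the socle, and the transfer of $\fF$-centrality between $L$ and $L/N$) is routine.
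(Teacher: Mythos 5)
Your proof is correct and follows essentially the argument the paper intends but omits (the end-of-proof symbol after the statement signals that the proof is the same as for Lie algebras): reduce modulo $\Psi(L)$, invoke Theorem \ref{psi0} to get $\soc(L)=\cser_L(\soc(L))=N(L)$ complemented, deduce $L/N(L)\in\fF$ from the hypothesis via the subdirect decomposition over the centralisers $\cser_L(A_i)$ and the $\sdir$-closure of the formation, and then assemble the chief factors using Lemma \ref{nrad} and Theorem \ref{th-Fchief}. All steps check out, including the points that genuinely need the Schunck and formation hypotheses (Lemma \ref{satH}(2) for the $\Psi$-reduction and $\sdir$-closure for $C\in\fF$).
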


\begin{definition}  A maximal subalgebra $M$ of $L$ is called $\fF$-{\em critical} in $L$ if $M$ is $\fF$-abnormal and $M + N(L) = L$. 
\end{definition}
\begin{definition}  A chief factor $U/V$ of $L$ is called $\fF$-{\em critical} if it is $\fF$-eccentric and all factors below it are either $\fF$-central or uncomplemented.
\end{definition}

\begin{lemma} Let $M$ be a maximal subalgebra of $L$.  Then $M$ is $\fF$-critical in $L$ if and only if $M$ complements an $\fF$-critical chief factor of $L$. \qed
\end{lemma}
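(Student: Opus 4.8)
The plan is to reduce both implications to a single chief factor and then to express the condition $M + N(L) = L$ as a statement about the chief factors lying below it. Write $B = \core_L(M)$. As $M$ is maximal, $L/B$ is primitive with unique minimal ideal $A/B = \soc(L/B)$, and $M/B$ complements $A/B$. A short computation shows that every chief factor $U/V$ complemented by $M$ satisfies $V \subseteq B$, $U + B = A$ and $U \cap B = V$, so that $U/V \cong A/B$ as $L$-bimodules; in particular all such factors are $\fF$-central together or $\fF$-eccentric together. By the preceding characterisation of $\fF$-normal maximal subalgebras, $M$ is $\fF$-abnormal exactly when $A/B$ is $\fF$-eccentric, and then every factor complemented by $M$ is $\fF$-eccentric. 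Hence in both directions I may assume $M$ is $\fF$-abnormal and must only match $M + N(L) = L$ with the existence of a complemented $\fF$-eccentric factor below which everything is $\fF$-central or uncomplemented.

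Next I would translate the nil-radical condition. Since $B$ is the largest ideal of $L$ contained in $M$ and $N(L)$ is an ideal, $M + N(L) = L$ is equivalent to $N(L) \not\subseteq B$. The image of $N(L)$ in the primitive algebra $L/B$ is a nilpotent ideal, hence is contained in $\soc(L/B) = A/B$; it is nonzero precisely when $N(L) \not\subseteq B$, in which case minimality forces it to equal $A/B$. Thus $M + N(L) = L$ is equivalent to $N(L) + B = A$, and then $N(L) \subseteq A$ and $N(L)/(N(L)\cap B) \cong A/B$ is an $\fF$-eccentric chief factor complemented by $M$ lying inside $N(L)$.

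For the implication that an $\fF$-critical $M$ complements an $\fF$-critical chief factor I would induct on $\dim L$. If $L$ has a minimal ideal $A_0 \not\subseteq B$, then $A_0 + B = A$ and $A_0 \cap B = 0$, so $A_0 \cong A/B$ is $\fF$-eccentric and complemented by $M$; being minimal it has no factors below it, so it is $\fF$-critical and we are done. Otherwise every minimal ideal lies in $B$; choosing one, $A_0$, I pass to $L/A_0$, where the associated factor, $\fF$-abnormality, and $\fF$-centrality of factors are preserved, and where $M + N(L) = L$ still gives $M/A_0 + N(L/A_0) = L/A_0$. By induction $M/A_0$ complements an $\fF$-critical factor $U/V$ of $L/A_0$ (so $A_0 \subseteq V$), which lifts to a factor of $L$ complemented by $M$; it is $\fF$-critical in $L$ provided the one extra factor below it, namely $A_0$, is $\fF$-central or uncomplemented. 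Conversely, suppose $M$ complements an $\fF$-critical factor $U/V$; it is $\fF$-eccentric, so $M$ is $\fF$-abnormal, and if $M + N(L)\ne L$, i.e.\ $N(L)\subseteq B$, then $U\not\subseteq N(L)$ while $U\cap N(L)\subseteq V$, and from this I would exhibit a complemented $\fF$-eccentric factor strictly below $U/V$, contradicting criticality.

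The main obstacle is the failure of the nil radical to commute with quotients: in general $N(L/A_0) \supseteq N(L)/A_0$, with strict inclusion exactly when the minimal ideal $A_0$ is simultaneously central and uncomplemented, so that a larger ideal becomes nilpotent only modulo $A_0$. This is precisely the configuration excluded by the clause ``$\fF$-central or uncomplemented'' in the definition of an $\fF$-critical chief factor, and the crux is to show that a minimal ideal $A_0 \subseteq B$ which obstructs the lift above — one that is $\fF$-eccentric and complemented — cannot coexist with $M$ being $\fF$-critical, and dually to carry out the descent in the converse. Once this compatibility between $N(L)$ and the chief-factor condition is established, both implications follow from the reduction above together with Lemma~\ref{nrad}, which identifies $N(L)$ as the intersection of the centralisers of the chief factors of $L$.
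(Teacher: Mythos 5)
Your outline correctly reduces both directions to the single factor $A/B$ over $B=\core_L(M)$ and correctly translates $M+N(L)=L$ into $N(L)\not\subseteq B$, but as you yourself admit, it stops short of the actual content in both directions, and the statement you single out as ``the crux'' of the forward direction is false. You propose to show that a minimal ideal $A_0\subseteq B$ which is $\fF$-eccentric and complemented ``cannot coexist with $M$ being $\fF$-critical.'' It can. Take $\fF=\f{N}$, $L=\langle x\rangle\ltimes(N\oplus V)$ with $N=\langle a,b,c\rangle$ the Heisenberg algebra ($ab=c$), $xa=a$, $xb=b$, $xc=2c$, and $V=\langle v\rangle$ with $xv=v$, and take $M=\langle x,b,c,v\rangle$. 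Then $B=\core_L(M)=\langle b,c,v\rangle$, $L/B$ is the non-abelian two-dimensional algebra, $M+N(L)=L$, so $M$ is $\f{N}$-critical; the minimal ideals are $\langle c\rangle$ and $\langle v\rangle$, both inside $B$, and $\langle v\rangle$ is complemented and $\f{N}$-eccentric. So your inductive step cannot be rescued by proving non-coexistence; it must instead be rescued by choosing $A_0$ correctly, and your sketch gives no criterion for doing so. (The same example also shows that your candidate factor $N(L)/(N(L)\cap B)$ from the second paragraph need not be $\fF$-critical: here it has the complemented eccentric factor $\langle v\rangle$ below it.)

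The missing idea is Theorem \ref{psi0}. If some minimal ideal lies outside $B$ you are done, as you say. Otherwise $\soc(L)\subseteq B$; if in addition $\Psi(L)=0$, then $N(L)=\soc(L)\subseteq B$ by Theorem \ref{psi0}, contradicting $M+N(L)=L$. Hence $\Psi(L)\ne 0$, and one chooses the minimal ideal $A_0$ inside $\Psi(L)$, where it is automatically uncomplemented; the passage to $L/A_0$ then contributes only an uncomplemented factor at the bottom and the induction closes. Your converse direction has the same unresolved issue in the opposite guise: after factoring out a minimal ideal $A_0\subseteq V$ (which, being below the critical factor, is $\fF$-central or uncomplemented), you must pull $M/A_0+N(L/A_0)=L/A_0$ back to $M+N(L)=L$, and your own last paragraph notes that $N(L/A_0)$ may properly contain $N(L)/A_0$. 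When $A_0\subseteq\Phi(L)$ this is handled by the Frattini nilpotency theorem quoted in the paper (nilpotent modulo an ideal inside $\Phi(L)$ implies nilpotent), but when $A_0$ is complemented and $\fF$-central you offer no argument at all; this case needs to be treated, not merely flagged.
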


\begin{theorem} $L$ has an $\fF$-critical maximal subalgebra if and only if $L \notin \fF$. \qed
\end{theorem}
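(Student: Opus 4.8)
The plan is to reduce everything to chief factors by means of the two characterisation lemmas just stated—a maximal subalgebra is $\fF$-normal precisely when it complements an $\fF$-central chief factor, and $\fF$-critical precisely when it complements an $\fF$-critical chief factor—together with the criterion that $L \in \fF$ if and only if every minimal ideal of $L/\Psi(L)$ is $\fF$-central.

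The forward implication is immediate. If $M$ is an $\fF$-critical maximal subalgebra, then $M$ is $\fF$-abnormal, so $L/\core_L(M) \notin \fF$; since $\fF$ is a formation and hence $\quot$-closed, $L \in \fF$ would force $L/\core_L(M) \in \fF$, a contradiction. Thus $L \notin \fF$.

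For the converse, assume $L \notin \fF$. First I would produce a complemented $\fF$-eccentric chief factor. By the minimal-ideal criterion, some minimal ideal $A/\Psi(L)$ of $L/\Psi(L)$ is $\fF$-eccentric. I would check that $\Psi(L/\Psi(L)) = 0$: since $\Psi(L) \subseteq \Phi(L)$ lies in every maximal subalgebra, the maximal subalgebras of $L/\Psi(L)$ are exactly the images of those of $L$, so $\Phi(L/\Psi(L)) = \Phi(L)/\Psi(L)$; any ideal of $L/\Psi(L)$ inside this pulls back to an ideal of $L$ inside $\Phi(L)$, hence lies in $\Psi(L)$ by maximality, so is zero. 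As $A/\Psi(L)$ is a minimal ideal of the soluble algebra $L/\Psi(L)$ it is abelian, and Lemma \ref{abcomp} (applicable because $\Psi(L/\Psi(L))=0$) shows it is complemented. Thus $A/\Psi(L)$ is a complemented $\fF$-eccentric chief factor of $L$.

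Finally I would extract an $\fF$-critical chief factor by minimality. Refining $0 \subset \Psi(L) \subset A \subseteq \cdots \subseteq L$ to a chief series and letting $U/V$ be its lowest term that is both complemented and $\fF$-eccentric (one exists, namely $A/\Psi(L)$), every chief factor below $U/V$ is then $\fF$-central, or else $\fF$-eccentric and uncomplemented by the choice of $U/V$; hence $U/V$ is $\fF$-critical and complemented. Letting $M$ be a maximal subalgebra complementing it, the $\fF$-critical chief factor lemma yields that $M$ is an $\fF$-critical maximal subalgebra of $L$. I expect the one genuinely delicate point to be this existence step—arranging the $\fF$-eccentric chief factor to be complemented—which is exactly where $\Psi(L/\Psi(L))=0$ and Lemma \ref{abcomp} are needed; the concluding minimality argument is then routine.
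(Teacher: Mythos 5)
Your argument is correct and follows exactly the route the paper intends (the proof is omitted there as a routine transcription of Stitzinger's Lie-algebra argument): the abnormality of an $\fF$-critical $M$ forces $L/\core_L(M)\notin\fF$ and hence $L\notin\fF$, while conversely an $\fF$-eccentric minimal ideal of $L/\Psi(L)$ is complemented by Lemma~\ref{abcomp} and is automatically an $\fF$-critical chief factor because everything below it lies in $\Psi(L)\subseteq\Phi(L)$ and so is uncomplemented. The only point you leave implicit --- that the complement supplied by Lemma~\ref{abcomp} is a \emph{maximal} subalgebra --- is harmless, since $A/\Psi(L)$ is irreducible as a module for the complement.
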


\begin{definition}  A subalgebra $U$ of $L$ is called an $\fF$-{\em normaliser} of $L$ if there exists a chain $U = U_0 \subset U_1 \subset \dots \subset U_n = L$ such that each $U_i $ is an $\fF$-critical maximal subalgebra of $U_{i+1}$ and $U$ has no $\fF$-critical maximal subalgebras.
\end{definition}

It is clear that every soluble Leibniz algebra $L$ has an $\fF$-normaliser and that if $U$ is an $\fF$-normaliser of $L$, then $U \in \fF$.

\begin{theorem}  Let $K$ be an $\fF$-normaliser of $L$.  Then $K$ covers every $\fF$-central chief factor of $L$ and avoids every $\fF$-eccentric chief factor of $L$. \qed
\end{theorem}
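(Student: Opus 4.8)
The plan is to argue by induction on $\dim L$, proving simultaneously that $K$ has the covering--avoidance property (it either covers or avoids each chief factor) and that the alternative is decided by $\fF$-centrality. By the Jordan--Hölder theorem for chief factors it is enough to verify the assertion along one chief series, and I would choose a series running through a conveniently chosen minimal ideal $A$ of $L$. If $L \in \fF$, then $L$ has no $\fF$-critical maximal subalgebra, so the defining chain is trivial and $K = L$; by Theorem \ref{th-Fchief} every chief factor of $L$ is then $\fF$-central, and $K = L$ covers all of them. So I may assume $L \notin \fF$ and fix a minimal ideal $A$.

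The first main step is to pass to the quotient $L/A$. I would show that $(K+A)/A$ is an $\fF$-normaliser of $L/A$: applying the canonical map to the defining chain $K = U_0 \subset \dots \subset U_n = L$ yields the $(U_i+A)/A$, and one checks that consecutive terms are either equal (a collapse, to be discarded) or that $(U_i+A)/A$ is again an $\fF$-critical maximal subalgebra of $(U_{i+1}+A)/A$, while $(K+A)/A$ inherits the property of having no $\fF$-critical maximal subalgebra. Granting this, the inductive hypothesis applies to $L/A$, so $(K+A)/A$ covers the $\fF$-central and avoids the $\fF$-eccentric chief factors of $L/A$. Since $\fF$-centrality of a chief factor lying above $A$ is unchanged on passing to $L/A$, and covering (respectively avoidance) of $(H/A)/(J/A)$ by $(K+A)/A$ is equivalent to $K + J \supseteq H$ (respectively $K \cap H \subseteq J$) because $A \subseteq J$, this settles every chief factor of $L$ lying above $A$.

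The second step is the bottom factor $A$ itself: I must show that $A \subseteq K$ when $A$ is $\fF$-central and $K \cap A = 0$ when $A$ is $\fF$-eccentric. Here I would use that $K \in \fF$ (it has no $\fF$-critical maximal subalgebra), together with the characterisations that a maximal subalgebra is $\fF$-normal exactly when it complements an $\fF$-central chief factor and $\fF$-critical exactly when it complements an $\fF$-eccentric $\fF$-critical one. When $A \not\subseteq \Phi(L)$ it is complemented (Lemma \ref{abcomp}, Theorem \ref{psi0}); if $A$ is moreover $\fF$-eccentric, each such complement is an $\fF$-critical maximal subalgebra, and one shows that the descent through $\fF$-critical maximal subalgebras producing $K$ keeps $K$ inside a complement of $A$, forcing $K \cap A = 0$; if $A$ is $\fF$-central, no $\fF$-critical maximal subalgebra can avoid $A$, so the chain covers $A$ at every stage and $A \subseteq K$. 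The remaining case $A \subseteq \Phi(L)$, where $A$ cannot be complemented, is disposed of using the saturation of the Schunck class $\fF$ (Lemma \ref{satH}).

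I expect the principal obstacle to be the quotient step. The delicate point is that the image of an $\fF$-critical maximal subalgebra $U_i \subset U_{i+1}$ under $L \to L/A$ need not remain maximal or $\fF$-critical without argument: one must verify that $(U_i+A)/A$ is maximal in $(U_{i+1}+A)/A$ whenever the two images differ, that $\fF$-abnormality is preserved, and that $U_i + N(U_{i+1}) = U_{i+1}$ passes to the images, using that the nilpotent radical maps onto the nilpotent radical of the quotient. A secondary difficulty is the Frattini case $A \subseteq \Phi(L)$ in the second step, where complementation is unavailable and one must instead appeal to saturation to conclude that such a factor is $\fF$-central and therefore covered.
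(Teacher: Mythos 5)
The paper gives no proof of this theorem: it is marked \qedsymbol\ because the argument is the same as for Lie algebras (Stitzinger \cite{Stitz-CA}), where the induction runs \emph{down the defining chain} $K=U_0\subset\dots\subset U_n=L$ rather than through a quotient $L/A$. Your quotient step is sound in outline (though note that covering and avoidance are not Jordan--H\"older invariants, so ``one chief series suffices'' really means: reduce a factor $H/J$ with $J\ne 0$ modulo a minimal ideal inside $J$, and treat \emph{every} minimal ideal directly, not a conveniently chosen one). The real content is therefore your second step, the minimal ideal $A$ itself, and there the argument has two problems, one of them fatal.

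The fatal one is the Frattini case. It is false that a minimal ideal $A\subseteq\Phi(L)$ must be $\fF$-central: saturation says that $L/\Psi(L)\in\fF$ forces $L\in\fF$, and says nothing about individual Frattini chief factors. Take $\fF=\f{N}$ and let $L=\langle x,v_1,v_2\rangle$ be the Lie algebra with $xv_1=v_1$, $xv_2=v_1+v_2$, $v_1v_2=0$. One checks that every maximal subalgebra of $L$ contains $v_1$, so $A=\langle v_1\rangle$ is a minimal ideal inside $\Phi(L)$; it is $\f{N}$-eccentric because $L/\cser_L(A)$ acts on it as the identity; and the $\f{N}$-normaliser $\langle x\rangle$ (a Cartan subalgebra, reached via the chain $\langle x\rangle\subset\langle x,v_1\rangle\subset L$) \emph{avoids} $A$. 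Your argument would instead conclude $A\subseteq K$. The second problem is that, even in the complemented case, the descent ``the chain covers $A$ at every stage'' (resp.\ ``keeps $K$ inside a complement of $A$'') is unjustified: after one step you only know $A\subseteq U_{n-1}$, and before you can repeat the argument inside $U_{n-1}$ you must know that $A$ is still an irreducible ideal of $U_{n-1}$ of the \emph{same} $\fF$-centrality. This is exactly what criticality buys and what your sketch never uses: since $U_{n-1}+N(L)=L$ and $N(L)\subseteq\cser_L(A)$ by Lemma \ref{nrad}, we get $L=U_{n-1}+\cser_L(A)$, so the $U_{n-1}$-submodules of $A$ coincide with the $L$-submodules and $U_{n-1}/\cser_{U_{n-1}}(A)\cong L/\cser_L(A)$ compatibly with the action; hence $A$ is a minimal ideal of $U_{n-1}$ that is $\fF$-central or $\fF$-eccentric exactly as before. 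With this lemma the theorem follows by inducting down the chain --- the eccentric Frattini case included, since an $\fF$-eccentric chief factor of $K\in\fF$ is impossible and so $A$ must be avoided at some stage of the descent --- and the detour through $L/A$ becomes unnecessary.
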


There are many more results in Stitzinger's paper \cite{Stitz-CA} which generalise to Leibniz algebras.  I quote here only those needed later in this paper.

\section{Frattini properties}

By Barnes and Newell \cite[Theorem 4.3]{BN}, Schunck classes of soluble Lie algebras are $\fratsn$-closed.  This does not hold for Schunck classes of soluble Leibniz algebras in general.

\begin{example} \label{notstrongcl} Let $\f{U}_1$ be the Schunck class of soluble Leibniz algebras whose primitives are the $1$-dimensional Lie algebra and the non-abelian $2$-dimensional Lie algebra $P_2$.  Then $\f{U}_1$ is not $\fratn$-closed.
\end{example}

\begin{proof}  Let $S = \langle s_1, s_2, s_3 \rangle$ be the Lie algebra with the multiplication $s_1s_2 = s_3$, $s_2s_3=s_1$ and $s_3s_1 = s_2$.  Let $Z = \langle z \rangle$ and let $L$ be the Lie algebra $S \oplus Z$.  Let $V = \langle v_1,v_2, v_3 \rangle$ be the $L$-module giving the adjoint representation of $S$, $s_iv_i = 0$, $s_1v_2 = v_3 = -s_2v_1$, $s_2v_3 = v_1 = -s_3v_2$ and $s_3v_1 = v_2 = -s_1v_3 $ with $zv=v$ for all $v \in V$.  We regard $\Hom(S,V)$ as the $L$-module $\{f \in \Hom(L,V) \mid f(Z) = 0\}$ and set $W = \Hom(S,V) \oplus V$as left $L$-module and make it into a Leibniz module by setting $(f,v)x = (-xf, f(x))$ for all $x \in L$.  Note that $zf = f$ for all $f  \in \Hom(S,V)$.  The argument of Lemma \ref{LP} shows that the exact sequence  $$0 \to \asym V \to W \to \Hom(S,V) \to 0$$ does not split.

Now let $X$ be the split extension of $W$ by $L$.  Then $\asym V$ is a minimal ideal of $X$.  Since $W$ does not split over $\asym V$, $X$ does not split over $\asym V$ and so $\asym V \subseteq \Phi(X)$.  Put $U = W + Z$.  Then $U \id X$.  As $U/\asym V$ is a Lie algebra with the left action of $z$ on $W/\asym V$ the identity, we have $U/\asym V \in \f{U}_1$.  But put $K = \Hom(S,V) + \langle v _1, v_2 \rangle \subset W$.  Then $K \id U$ and $U/K \simeq \asym P_2$.  Thus $U \notin \f{U}_1$.
\end{proof}

\begin{theorem} \label{th-FE2star}  Suppose that $\fF$ is a Schunck formation of soluble Leibniz algebras.  Then $\fF$ is $\fratsn$-closed.
\end{theorem}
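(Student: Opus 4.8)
The plan is to recast membership in $\fF$ as a statement about chief factors via Theorem~\ref{th-Fchief} and then to isolate a single offending factor. Since $A/B\in\fF$, every chief factor of $A$ lying above $B$ is $\fF$-central (its $A$-centraliser contains $B$, so it is already a chief factor of $A/B$). By Theorem~\ref{th-Fchief} it therefore suffices to prove that every chief factor of $A$ lying inside $B$ is $\fF$-central. I would do this by induction on $\dim B$: choosing $D\id A$ with $B/D$ a minimal ideal of $A/D$, it is enough to show $B/D$ is $\fF$-central, for then $A/D\in\fF$ (Theorem~\ref{th-Fchief} again), and since $D\subseteq B\subseteq\Phi(L)$, $D\id A$ and $A\sn L$, the inductive hypothesis applied to the pair $(A,D)$ gives $A\in\fF$. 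Note that at no point may I pass to $L/D$, since $D$ need not be an ideal of $L$; the whole argument must be conducted inside $L$ itself.

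So the heart of the matter is the claim that the minimal ideal $B/D$ of $A/D$ is $\fF$-central. Suppose not. Then $A/D\notin\fF$, and since $\fF$ is a Schunck class it is projective (Theorem~\ref{GPS}); applying Lemma~\ref{Hcomp} to $A/D$ produces a subalgebra $H$ with $D\subseteq H\subseteq A$, $H+B=A$, $H\cap B=D$ and $H/D\in\fF$ --- a complement to the eccentric factor inside $A$. To reach a contradiction with $B\subseteq\Phi(L)$ it is enough to produce a \emph{proper} subalgebra $S$ of $L$ with $S+B=L$: any maximal subalgebra containing such an $S$ would contain $\Phi(L)\supseteq B$, hence all of $S+B=L$, which is impossible. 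Thus the task reduces to propagating the complement $H$ from $A$ to a proper complementing subalgebra of $L$.

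Before carrying out that propagation I would remove the asymmetry between the two kinds of irreducible module. The factor $B/D$ is either symmetric or antisymmetric (Lemma~\ref{irred}), and Theorem~\ref{pair} says that $\sym{(B/D)}$ is $\fF$-central if and only if $\asym{(B/D)}$ is; so the two cases stand or fall together and I may treat whichever is convenient. This is the first essential use of the formation hypothesis, and it is exactly the coincidence that fails for the non-formation Schunck class $\f{U}_1$ of Example~\ref{notstrongcl}, which is why no such statement can hold for Schunck classes in general.

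The hard part is the propagation up the subideal chain $A=A_0\id A_1\id\dots\id A_n=L$. In the soluble Lie setting this is where one uses that $\Phi(L)$ is an ideal, allowing the Frattini hypothesis to be restricted to each $A_{i+1}$ and the complement to be built step by step; for Leibniz algebras $\Phi(L)$ need not be an ideal (as an example in Section~\ref{sec-intro} shows), and this route is closed. My plan is to replace it by Theorem~\ref{InvComps}: at each stage the relevant $\fF$-hypereccentric component --- formed relative to a suitable $\fF$-subalgebra that is a subideal of $A_{i+1}$ --- is $A_{i+1}$-invariant, and this invariance should let the complement constructed at level $A_i$ be enlarged to one at level $A_{i+1}$ without ever needing $\Phi$ to be an ideal. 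Iterating to $A_n=L$ would yield the required proper $S$ with $S+B=L$, contradicting $B\subseteq\Phi(L)$; hence $B/D$ is $\fF$-central and $A\in\fF$. The base of the chain induction, $A=L$, is immediate: there $B\id L$ with $B\subseteq\Phi(L)$ gives $B\subseteq\Psi(L)$, so $L/\Psi(L)\in\fF$ and $L\in\fF$ by Lemma~\ref{satH}. I expect the genuine obstacle to lie precisely in making the $A_{i+1}$-invariance of Theorem~\ref{InvComps} interface correctly with the complement $H$, that is, in choosing the subideal $\fF$-subalgebra to which Theorem~\ref{InvComps} is applied so that the complement really does lift one step at a time.
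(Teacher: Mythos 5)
Your reduction to a single chief factor is sound: the induction on $\dim B$, the observation that the factors of $A$ above $B$ are automatically $\fF$-central, the base case $A=L$ via Lemma~\ref{satH}, and the Frattini-argument template (a proper $S\le L$ with $S+B=L$ contradicts $B\subseteq\Phi(L)$) are all correct, and you have correctly identified Theorem~\ref{pair}/Theorem~\ref{InvComps} as the place where the formation hypothesis must enter (consistently with Example~\ref{notstrongcl}). But the proof is not complete: the entire content of the theorem is concentrated in the step you defer, namely lifting the complement $H$ of the eccentric factor $B/D$ from $A$ to a proper supplement $S$ of $B$ in $L$ along the chain $A=A_0\id A_1\id\dots\id A_n=L$. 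You give no mechanism for a single lifting step $A_i\rightsquigarrow A_{i+1}$: Theorem~\ref{InvComps} produces invariant submodules of a module, not complements or normalisers, and it is not clear what module, relative to which subideal $\fF$-subalgebra, is supposed to carry the complement upward. You acknowledge this yourself, so as it stands the argument has a genuine gap exactly where the difficulty lies.

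It is worth noting that the paper's proof avoids this propagation problem entirely, and the contrast explains why your route is hard to close. Rather than working with ideals $D\id A$ of the subideal (which are not ideals of $L$ and so force either quotient-avoidance or complement-propagation), the paper arranges to work only with ideals of $L$: since the nilpotent residual $U_{\fN}$ of $U=A$ is an ideal of $L$ (Barnes, \emph{Subnormal subalgebras}, Theorem 2.5), one can take a minimal ideal $K$ of $L$ inside $U$, use that all $U$-composition factors of $K$ are isomorphic to dispose of the case $K\not\le V$, and then find an abelian ideal $A$ of $L$ with $K<A\le U_{\fN}$. The two cases $A^2=K$ (handled by the tensor-product Lemma~\ref{tensor}) and $A^2=0$ (handled by Theorem~\ref{InvComps}, whose whole point is that the hypercentral component $A^+$ is an ideal of $L$) each produce an ideal of $L$ by which one may factor, so the induction on $\dim(L)$ closes without ever lifting a complement. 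If you want to salvage your approach, the lesson is that Theorem~\ref{InvComps} is used in the paper to manufacture $L$-ideals inside $U$, not to transport complements; redirecting your argument toward that use would essentially reproduce the paper's proof.
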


\begin{proof} Suppose that $V \id U \sn L$, $V \le \Phi(L)$ and that $U/V \in \fF$.  We use induction over $\dim(L)$.  By Barnes \cite[Theorem 3.6]{LeibEngel}, the result holds if $U/V$ is nilpotent, so we may suppose that $U_{\fN} \ne 0$.  By Barnes \cite[Theorem 2.5]{LeibSub}, $U_\fN \id L$, so there exists a minimal ideal $K$ of $L$, $K \le U$.  By induction, we have $U/K \in \fF$.  But $\fF$ is a formation, so $U/K \cap V \in \fF$.  If $K \cap V \ne K$, then $K$ as $U/K$-bimodule has an $\fF$-central composition factor.  By Barnes \cite[Theorem 2.2]{LeibSub}, all composition factors of $K$ are isomorphic, so $K$ is $\fF$-hypercentral and $U \in \fF$.  Hence we may suppose that $K \cap V = K$ and so $K \le \Phi(L)$.

As we are supposing that $U$ is not nilpotent, we have $U_\fN > K$ and there exists an ideal $A$ of $L$, $K < A \le U_\fN$,  such that $A/K$ is a minimal ideal of $L/K$.  Since $A/K$ is nilpotent and $K \le \Phi(L)$, $A$ is nilpotent.  But $A^2 \id L$, so $A^2 = K$ or $A^2 = 0$.  Now $A^2=K$ implies that $K$ is a quotient of the $\fF$-hypercentral $U$-bimodule $A/K \otimes A/K$ and so, that $U \in \fF$.  Hence we may suppose that $A^2 = 0$.  Consider $A$ as $L/K$-bimodule.  By Theorem  \ref{InvComps}, the $(U,\fF)$-hypercentral component $A^+ \id L$.  But $A^+ \ne 0$ since $A/K$ is $(U,\fF)$-hypercentral.  By induction over $\dim(L)$, $U/A^+ \in \fF$ and as $A^+$ is $(U,\fF)$-hypercentral, we have $U \in \fF$. 
\end{proof}

There are some special cases for which the stronger result that $\fF$ is $\fratsnr$-closed holds.

\begin{theorem}  Suppose that $\fF = \loc(\f{K})$.  Then $\fF$ is $\fratsnr$-closed.
\end{theorem}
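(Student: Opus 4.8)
The plan is to reduce the statement to the known fact, established in Barnes \cite[Theorem 3.6]{LeibEngel}, that the class $\fN$ of nilpotent Leibniz algebras is $\fratsnr$-closed. Recall that membership $L \in \loc(\fK)$ is equivalent to $L/N(L) \in \fK$, where $N(L)$ denotes the nil radical. So, under the hypotheses $A \snr L$, $B \id A$, $A/B \in \fF$ and $B \subseteq \Phi(L)$, it suffices to prove $A/N(A) \in \fK$.

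First I would let $N$ be the preimage in $A$ of the nil radical $N(A/B)$, so that $N \id A$, $B \subseteq N$, and $N/B = N(A/B)$ is nilpotent. Since $A/B \in \loc(\fK)$, the quotient $A/N \cong (A/B)/N(A/B)$ lies in $\fK$.

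The crux is to show that $N$ is itself nilpotent, and here the $\fratsnr$-closure of $\fN$ does the work. As $N$ is a two-sided ideal of $A$, it is in particular a right ideal of $A$; prepending $N \idr A$ to a chain witnessing $A \snr L$ yields $N \snr L$. Moreover $B \id N$ (because $B \id A$ and $B \subseteq N$), $N/B \in \fN$, and $B \subseteq \Phi(L)$ by hypothesis, so Barnes \cite[Theorem 3.6]{LeibEngel} gives $N \in \fN$. Then $N$ is a nilpotent two-sided ideal of $A$, whence $N \subseteq N(A)$, and $A/N(A)$ is a quotient of $A/N \in \fK$. Since $\fK$ is a formation and hence $\quot$-closed, $A/N(A) \in \fK$, that is, $A \in \loc(\fK) = \fF$.

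I expect the only subtle point to be the verification that $N \snr L$; it rests precisely on the remark that a two-sided ideal is a right ideal, so that the right-subnormal chain for $A$ may be extended one step on the left without leaving the class of right ideals. The remaining steps are routine, using only the description of $\loc(\fK)$ via the nil radical and the $\quot$-closure of the defining formation $\fK$.
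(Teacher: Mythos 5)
Your proposal is correct and follows essentially the same route as the paper: take the preimage $N$ in $A$ of the nil radical of $A/B$, observe $N \snr L$, apply the $\fratsnr$-closure of $\fN$ from Barnes \cite[Theorem 3.6]{LeibEngel} to conclude $N$ is nilpotent, and then use $A/N \in \fK$ together with $\quot$-closure of $\fK$ to get $A \in \loc(\fK)$. The paper's version is just a terser statement of the same argument, leaving implicit the two points you spell out (that a two-sided ideal is a right ideal so the right-subnormal chain extends, and that $N \subseteq N(A)$ makes $A/N(A)$ a quotient of $A/N$).
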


\begin{proof}  Suppose that $V \id U \snr L$, $V \le \Phi(L)$ and that $U/V \in \fF$.   Let $N/V$ be the nil radical of $U/V$.  Then $N \snr L$ and by Barnes \cite[Theorem 3.6]{LeibEngel}, $N$ is nilpotent.  Since $U/N \in \fK$, we have $U \in \loc(\f{K})$.
\end{proof}

\begin{theorem} \label{evphi}  Let $\fF = \ev(\Lambda)$ be an eigenvalue defined formation.  Then $\fF$ is $\fratsnr$-closed.
\end{theorem}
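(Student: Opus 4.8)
The plan is to follow the pattern of the proof of Theorem \ref{Ev}, applied to the inner derivations of $U$ rather than of $L$, and to reduce the removal of $V$ to the case $\Lambda=\{0\}$ treated in Barnes \cite[Theorem 3.6]{LeibEngel}. So suppose $V\id U\snr L$, $V\le\Phi(L)$ and $U/V\in\ev(\Lambda)$, and write $d_x\co U\to U$ for the inner derivation $d_x(a)=xa$; this is a derivation of $U$ because $U$ is a Leibniz algebra. As in the remark following the definition of $\ev(\Lambda)$, it is enough to control the left actions: every eigenvalue of an action on $U$ occurs on some chief factor of $U$, each such factor is symmetric or antisymmetric, and for such factors the eigenvalues of the right action are either the negatives of, or all equal to $0$ among, those of the left action, hence lie in $\Lambda$ whenever those of the left action do. Thus it suffices to prove $K_\Lambda(d_x)=U$ for every $x\in U$.

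I would fix $x\in U$ and compare $d_x$ with the derivation $\bar d_x$ it induces on $U/V$. Since $V\id U$, the subspace $V$ is $d_x$-invariant, so $U$ decomposes as $U=K_\Lambda(d_x)\oplus K'$, where $K'$ is the sum of the generalised eigenspaces of $d_x$ for the eigenvalues outside $\Lambda$, and $V$ respects this decomposition. Because $U/V\in\ev(\Lambda)$, every eigenvalue of $\bar d_x$ lies in $\Lambda$; hence for each $\mu\notin\Lambda$ the generalised $\mu$-eigenspace of $d_x$ maps to $0$ in $U/V$, so it lies in $V$. Therefore $K'\subseteq V$ and $K_\Lambda(d_x)+V=U$, while by Lemma \ref{K-Lambda} the summand $K_\Lambda(d_x)$ is a subalgebra of $U$.

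It remains to pass from $K_\Lambda(d_x)+V=U$ to $K_\Lambda(d_x)=U$, and this is the crux. One cannot simply invoke $V\subseteq\Phi(U)$: the Frattini subalgebra of a soluble Leibniz algebra is badly behaved (the earlier example shows $\Phi(L)$ need not even be an ideal, and Example \ref{notstrongcl} shows that Schunck classes need not be $\fratn$-closed). Instead I would carry out the same induction on $\dim(L)$ that proves $\fN$ to be $\fratsnr$-closed, noting that $\ev(\{0\})=\fN$ and that the present statement is exactly the generalisation of that one. At each step one removes a minimal ideal of $L$, verifies that the hypotheses $V\id U\snr L$, $V\le\Phi(L)$ and $U/V\in\ev(\Lambda)$ pass to the quotient, and uses $V\le\Phi(L)$ together with the right-subideal chain $U\snr L$ to rule out $K_\Lambda(d_x)$ being a proper supplement of $V$ in $U$. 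The subalgebra property furnished by Lemma \ref{K-Lambda} is precisely what lets the Engel (nilpotency) kernel of the case $\Lambda=\{0\}$ be replaced by $K_\Lambda(d_x)$ with no other change to that argument.

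The main obstacle is this last inductive step, namely transporting the Frattini information across the right-subideal chain. Heuristically, a generalised eigenvector of $d_x$ with eigenvalue outside $\Lambda$ carries the data of a complemented, hence non-Frattini, chief factor and so cannot be absorbed into $\Phi(L)$; turning this into a proof for right subideals, as opposed to ideals, is where the effort lies and is exactly what \cite[Theorem 3.6]{LeibEngel} provides in the nilpotent case. Applying the conclusion $K_\Lambda(d_x)=U$ to every $x\in U$ then gives $U\in\ev(\Lambda)$.
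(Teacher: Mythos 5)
Your proposal correctly sets up the decomposition and correctly identifies the crux --- that $K_{\Lambda}(d_x)+V=U$ cannot be upgraded to $K_{\Lambda}(d_x)=U$ by citing $V\subseteq\Phi(L)$, since $V$ lies in the Frattini subalgebra of $L$, not of $U$ --- but it then stops short of resolving it. The final two paragraphs are a plan, not a proof: ``carry out the same induction that proves $\fN$ to be $\fratsnr$-closed'' and ``use $V\le\Phi(L)$ together with the right-subideal chain to rule out $K_{\Lambda}(d_x)$ being a proper supplement'' is precisely the statement that remains to be established, and you concede as much when you write that turning the heuristic into a proof for right subideals ``is where the effort lies.'' As it stands, the argument has a genuine gap at its only nontrivial point.

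The paper closes this gap without any induction, by a device your restriction to $d_x\co U\to U$ forecloses: it takes $d_u\co L\to L$, $d_u(x)=ux$, as a derivation of the \emph{whole} algebra $L$. The chain $U=U_0\idr U_1\idr\dots\idr U_n=L$ gives $d_u^n(L)\subseteq U$, so every generalised eigenspace of $d_u$ on $L$ for a nonzero eigenvalue is contained in $\im(d_u^n)\subseteq U$; since $0\in\Lambda$ (as $\Lambda$ is a subspace), the complementary summand $K'$ of $K_{\Lambda}(d_u)$ in $L$ consists entirely of such eigenspaces, hence lies in $U$, and then $U/V\in\ev(\Lambda)$ forces $K'\subseteq V$ exactly as in your second paragraph. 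The payoff is that one obtains $K_{\Lambda}(d_u)+V=L$ rather than $=U$, and now $V\subseteq\Phi(L)$ together with Lemma \ref{K-Lambda} (which makes $K_{\Lambda}(d_u)$ a subalgebra of $L$) yields $K_{\Lambda}(d_u)=L$ in one step; restricting to the $d_u$-invariant subalgebra $U$ gives the eigenvalue condition there. If you want to salvage your write-up, replace the restriction of $d_x$ to $U$ by this global version and the inductive appeal to \cite[Theorem 3.6]{LeibEngel} becomes unnecessary.
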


\begin{proof}  Suppose $V \ideq U = U_0 \idr U_1 \idr \dots \idr U_n = L$,  $V \subseteq \Phi(L)$ and  $U/V \in \fF$.  Take $u \in U$ and let $d_u\co L \to L$ be the inner derivation $d_u(x) = ux$.  Then $d^n_u(L) \subseteq U$.  Since $0 \in \Lambda$ and $U/V \in \ev(\Lambda)$, $K_{\Lambda}(d_u) + V = L$.  But $V \subseteq \Phi(L)$.  Therefore $K_{\Lambda}(d_u) = L$. Since for all $u \in U$, all eigenvalues of $d_u|_U$ are in $\Lambda$, $U \in \ev(\Lambda)$.
\end{proof}

\begin{cor}  Suppose $\ch(F)=0$.  Let $\fF$ be a Schunck formation.  Then $\fF$ is $\fratsnr$-closed.
\end{cor}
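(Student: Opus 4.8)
The plan is to prove the sharper statement that, in characteristic $0$, every nonzero Schunck formation $\fF$ is itself an eigenvalue-defined formation, $\fF=\ev(\Lambda_{\fF})$ for a suitable normal subspace $\Lambda_{\fF}\subseteq\bar F$, after which Theorem \ref{evphi} applies directly. (The class $\{0\}$ is degenerate and may be set aside; for $\fF\neq\{0\}$ one has $\fN\subseteq\fF$, so $0\in\Lambda_{\fF}$.)

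First I would record the one place where characteristic $0$ enters. By Ayupov and Omirov \cite[Theorem 4]{AyO} every soluble Leibniz algebra is completely soluble, so $L/N(L)$ is abelian; since $N(L)$ is the intersection of the centralisers of the chief factors (Lemma \ref{nrad}), the algebra $Q=L/\cser_L(A/B)$ acting on any chief factor $A/B$ is an abelian Lie algebra. Consequently $A/B$ is determined, up to the data relevant to $\fF$-centrality, by its weight $\lambda\co Q\to\bar F$. I would then define $\Lambda_{\fF}$ to be the set of all eigenvalues $\mu\in\bar F$ that occur as the value of some element on some $\fF$-central chief factor, together with $0$.

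The heart of the argument is to identify $\fF$ with $\ev(\Lambda_{\fF})$, and it splits into three points. (i) $\Lambda_{\fF}$ is a normal $F$-subspace: it is Galois-stable because $\fF$ is defined over $F$; it is closed under $F$-scaling because if $\mu=\lambda(x)$ occurs on a central factor then $\alpha\mu=\lambda(\alpha x)$ occurs on the same factor for every $\alpha\in F$; and it is closed under addition because, by Lemma \ref{tensor}, the tensor product of two $\fF$-hypercentral modules of the same type is $\fF$-hypercentral, and weights add under tensor product. (ii) $\fF\subseteq\ev(\Lambda_{\fF})$ is immediate from Theorem \ref{th-Fchief}, since every eigenvalue of an action on $L\in\fF$ is an eigenvalue on a chief factor, hence on an $\fF$-central one, hence lies in $\Lambda_{\fF}$. (iii) For the reverse inclusion I would take $L\in\ev(\Lambda_{\fF})$ and a chief factor $A/B$ with weight $\lambda$, all of whose values lie in $\Lambda_{\fF}$, and show $A/B$ is $\fF$-central. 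Choosing an $F$-basis $\mu_1,\dots,\mu_r$ of $\lambda(Q)\subseteq\Lambda_{\fF}$ and writing $\lambda=\sum_i\mu_i\ell_i$ with $\ell_i\in Q^{*}$, each one-dimensional module of weight $\mu_i\ell_i$ is an inflation along $\ell_i$ of an $\fF$-central module with eigenvalue $\mu_i$, its kernel acting as a central ideal, and so is $\fF$-central by Corollary \ref{central}; tensoring these (Lemma \ref{tensor}) reconstructs $A/B$ and shows it $\fF$-central. Using Theorem \ref{pair} I would first pass to the symmetric versions throughout, so that Lemma \ref{tensor} is always applicable, and then transfer back. With $\fF=\ev(\Lambda_{\fF})$ established, Theorem \ref{evphi} gives that $\fF$ is $\fratsnr$-closed.

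The main obstacle is point (iii): realising an arbitrary admissible weight $\lambda$ as a tensor product of central one-dimensional weights, and in particular verifying that no \emph{correlations} between the eigenvalues of different elements can be imposed by $\fF$. The decomposition $\lambda=\sum_i\mu_i\ell_i$ together with the additivity supplied by Lemma \ref{tensor} is exactly what rules this out; the bookkeeping with fields of definition and Galois orbits of weights is the part that needs to be carried out carefully.
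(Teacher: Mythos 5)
Your overall architecture is the paper's: show that in characteristic $0$ every Schunck formation is eigenvalue defined, then invoke Theorem \ref{evphi}. But the paper obtains the first step in one line, by citing Barnes \cite{SatF0} for the statement that $\Lie\fF$ is eigenvalue defined and transferring it to $\fF$ via Corollary \ref{one-one}. You are instead attempting to reprove that theorem from scratch, and your sketch of it has a genuine gap at exactly the point you flag as needing care.

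The gap is in step (iii). Writing $\lambda=\sum_i\mu_i\ell_i$ and tensoring (Lemma \ref{tensor}) reduces you to showing that each weight $\mu_i\ell_i$ is $\fF$-central, i.e.\ that the primitive algebra whose socle complement is \emph{one-dimensional} and whose socle has eigenvalue set the Galois orbit of $F\mu_i$ lies in $\fF$. But membership of $\mu_i$ in $\Lambda_{\fF}$ only tells you that $\mu_i=\nu(y)$ (or an $F$-combination of such values) for some $\fF$-central weight $\nu$ on some abelian algebra $R$ that may have dimension greater than one. Passing from the primitive $R\ltimes V\in\fF$ to the algebra $\langle y\rangle\ltimes V_0$ (with $V_0$ an irreducible $\langle y\rangle$-submodule of $V$) is a restriction to a \emph{subalgebra}, and none of the closure properties at your disposal --- $\quot$, $\sdir$, $\frat$, direct sums, tensor products, Corollary \ref{central} --- gives subalgebra closure; formations, even saturated ones, are not subalgebra-closed. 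The inflation along $\ell_i$ does not help, because the image of the corresponding homomorphism $Q\to R$, $q\mapsto\ell_i(q)y$, is the subalgebra $\langle y\rangle$ of $R$, not a quotient of $R$, so $\fF$-centrality of the pulled-back module is exactly the unproved restriction statement again. This is the ``correlations'' problem you name, and the decomposition $\lambda=\sum_i\mu_i\ell_i$ relocates it rather than resolving it: ruling out correlations between the eigenvalues of different elements is the actual content of \cite{SatF0}, which requires a further construction (in the spirit of the non-split extension used in the proof of Theorem \ref{pair}) and not merely bookkeeping with Galois orbits. To repair the argument you should either supply that construction or, as the paper does, quote \cite{SatF0} and use Corollary \ref{one-one}. (Your parts (i) and (ii), the reduction to abelian $Q$ via complete solubility, and the use of Theorem \ref{pair} to normalise to symmetric modules are all fine, as is setting aside $\fF=\{0\}$.)
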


\begin{proof}  By Barnes \cite{SatF0}, $\Lie\fF$ is eigenvalue defined, and it follows from Corollary \ref{one-one}, that $\fF$ is eigenvalue defined.
\end{proof}

\section{Intravariance}

A subalgebra $U$ of a Lie algebra $L$ is called intravariant in $L$ if every derivation of $L$ is the sum of an inner derivation and a derivation whch stabilisees $U$.
This is not appropriate for Leibniz algebras as it ignores the right actions of elements.  Loday in \cite{Lod}, has introduced the concept of a biderivation which remedies this deficiency.

\begin{definition}   Let $A$ be an algebra.  An {\em antiderivation} of $A$ is a linear map $D \co A \to A$ such that $D(xy) = xDy - yDx$. A {\em biderivation} is a pair $(d,D)$ where $d$ is a derivation and $D$ is an antiderivation such that $(dx)y = (Dx)y$ for all $x,y \in A$.
\end{definition}

\begin{definition} Let $L$ be a Leibniz algebra.  The {\em inner biderivation} given by $a \in L$ is the pair $\Biad_a = (\ad_a, \Ad_a)$, where $\ad_a(x) = ax$ and $\Ad_a(x) = -xa$.
\end{definition}

The biderivations of an algebra $A$ with multiplication
$$[(d,D), (d',D')] = (dd'-d'd, dD' - D'd)$$
form a Leibniz algebra denoted by $\Bider(A)$.  If $L$ is a Leibniz algebra and $a \in L$, then $\Biad_a$ is a biderivation of $L$ and the map $a \mapsto \Biad_a$ is a Leibniz algebra homomorphism of $L$ into $\Bider(L)$.  

Unfortunately, biderivations are too general for the purposes of this section.
Given any algebra $A$ and an algebra $B$ of biderivations of $A$, we can form the split extension $X = A+B$ of $A$ by $B$ by defining the products of $a \in A$ and $b = (d,D) \in B$ by $[b,a] = da$ and $[a,b] = -Da$.  For the split extension to be a Leibniz algebra, (as well as needing $A$ itself to be a Leibniz algebra,) we have to restrict the biderivations in $B$.

\begin{definition}  Let $L$ be a Leibniz algebra. Let $d$ be a derivation and $D$ an antiderivation of $L$.  We say that the pair $(d,D)$ is {\em close} if $(d-D)L \subseteq \Leib(L)$.
\end{definition} 

A close pair is a biderivation since $(d-D)x \in \Leib(L)$ implies $((d-D)x)y = 0$.    An inner biderivation $\Biad_a$ is close as $\ad_a(x)-\Ad_a(x) = ax + xa \in \Leib(L)$.

\begin{lemma} The close biderivations of a Leibniz algebra $L$ form a subalgebra $\Cloder(L)$ of the biderivation algebra $\Bider(L)$.
\end{lemma}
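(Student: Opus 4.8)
The plan is to show that $\Cloder(L)$ is closed under both the Leibniz bracket inherited from $\Bider(L)$ and under scalar multiplication and addition, so that it forms a subalgebra. Linearity is immediate: if $(d_1,D_1)$ and $(d_2,D_2)$ are close, then $(d_1-D_1)L \subseteq \Leib(L)$ and $(d_2-D_2)L \subseteq \Leib(L)$, and since $\Leib(L)$ is a subspace, any linear combination $(\alpha d_1 + \beta d_2 - \alpha D_1 - \beta D_2)L \subseteq \Leib(L)$, so the linear-combination pair is close. Since we already know close biderivations are biderivations, and since sums and scalar multiples of derivations (resp. antiderivations) are again derivations (resp. antiderivations), the only substantive point is closure under the product.

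Recall the product in $\Bider(L)$ is $[(d,D),(d',D')] = (dd'-d'd,\, dD'-D'd)$. So I must verify that if $(d,D)$ and $(d',D')$ are close, the resulting biderivation
$$(dd'-d'd,\; dD'-D'd)$$
is again close, i.e. that $\bigl((dd'-d'd) - (dD'-D'd)\bigr)L \subseteq \Leib(L)$. First I would rewrite the difference of the two components as $(dd'-d'd) - (dD'-D'd) = d(d'-D') - d'd + D'd$. The natural strategy is to exploit the hypotheses $(d'-D')L \subseteq \Leib(L)$ and $(d-D)L \subseteq \Leib(L)$ together with the fact, established earlier in the excerpt, that $\Leib(L)$ is a two-sided ideal of $L$ and that left multiplications are derivations (so $d$, being a derivation, should map $\Leib(L)$ into $\Leib(L)$, since $\Leib(L)$ is spanned by squares $x^2$ and $d(x^2) = (dx)x + x(dx) \in \Leib(L)$ as each summand is a product of an element with $x$ and sits in the left ideal generated by squares).

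Concretely, I would argue: applying $d$ to $(d'-D')x \in \Leib(L)$ gives $d(d'-D')x \in \Leib(L)$ since $d$ preserves $\Leib(L)$. For the remaining term $-d'd + D'd = -(d'-D')d$, I observe that $(d'-D')(dx) \in \Leib(L)$ directly from the closeness of $(d',D')$ applied to the element $dx$. Hence
$$\bigl((dd'-d'd)-(dD'-D'd)\bigr)x = d(d'-D')x - (d'-D')(dx) \in \Leib(L),$$
so the product pair is close. The main obstacle I anticipate is the verification that a derivation $d$ of $L$ actually maps $\Leib(L)$ into itself; this uses the earlier lemma that $\Leib(L) = \langle x^2 \mid x \in L\rangle$ is a two-sided ideal and a short computation on the spanning squares, so once that is in hand the rest is bookkeeping. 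I would present the derivation-preservation fact as a preliminary observation, then assemble the displayed identity above to conclude closure under the bracket, completing the proof that $\Cloder(L)$ is a subalgebra.
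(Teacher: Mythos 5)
Your proof is correct and follows essentially the same route as the paper's: the same decomposition $\bigl((dd'-d'd)-(dD'-D'd)\bigr)x = d(d'-D')x - (d'-D')dx$, the same appeal to closeness of $(d',D')$ applied to both $x$ and $dx$, and the same preliminary fact that a derivation $d$ maps $\Leib(L)$ into itself. The only quibble is your parenthetical justification of that preliminary fact: $(dx)x + x(dx)$ lies in $\Leib(L)$ not because each summand is a product involving $x$ (such products need not lie in $\Leib(L)$), but by polarization, $(dx)x + x(dx) = (dx+x)^2 - (dx)^2 - x^2 \in \langle a^2 \mid a \in L\rangle$.
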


\begin{proof}  For any derivation $d$ of $L$, we have $dx^2 = (dx)x + x(dx) \in \Leib(L)$.  Thus $d\Leib(L) \subseteq \Leib(L)$.  For any antiderivation $D$ of $L$, we have $Dx^2 = x(Dx) -x(Dx) = 0$  Thus $D\Leib(L) = 0$.  

Suppose that $b = (d,D)$ and $b'=(d',D')$ are close.  We have to prove that $(dd'-d'd, dD'-D'd)$ is close.  For $x \in L$, we have
$$((dd'-d'd)-(dD'-D'd))x = d(d'-D')x - (d'-D')dx.$$
Since $(d',D')$ is close, $(d'-D')x$ and $(d'-D')dx$ are in $\Leib(L)$ and it follows that $[b,b']$ is close.
\end{proof}

\begin{lemma}  Suppose $B$ is an algebra of close biderivations of the Leibniz algebra $L$.  Then the split extension of $L$ by $B$ is a Leibniz algebra.
\end{lemma}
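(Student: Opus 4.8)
The plan is to realise the split extension as the vector space $X = L \oplus B$, extend the multiplications of $L$ and of $B$ by the cross products $ba = da$ and $ab = -Da$ for $b = (d,D) \in B$ and $a \in L$, and then verify the left Leibniz identity $u(vw) = (uv)w + v(uw)$ directly. By bilinearity it suffices to treat the eight cases in which each of $u,v,w$ lies in $L$ or in $B$. Two of these are immediate: if $u,v,w \in L$ the identity is the Leibniz identity of $L$, and if $u,v,w \in B$ it is the Leibniz identity of $B$, which holds because $B$ is a subalgebra of the Leibniz algebra $\Bider(L)$.

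The three cases with exactly one factor in $B$ each reduce to one of the defining properties of a biderivation. With the $B$-factor on the left, $b(aa') = (ba)a' + a(ba')$ is precisely $d(aa') = (da)a' + a(da')$, the derivation rule. With the $B$-factor in the middle, the identity collapses after cancelling $a(da')$ to the compatibility $(da)a' = (Da)a'$. With the $B$-factor on the right, it collapses to the antiderivation rule $D(aa') = a(Da') - a'(Da)$. In each instance the required equation is exactly one of the hypotheses defining $(d,D)$ as a biderivation, so no further assumption is needed.

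Of the three cases with two factors in $B$, the two in which the $B$-factors occupy the first two positions or the first and third positions unwind, after expanding $bb' = (dd'-d'd,\, dD'-D'd)$ and applying the derivation and antiderivation actions, to identities in which every term cancels term-by-term; again no extra hypothesis is required.

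The one genuinely delicate case is $u \in L$, $v = (d,D) \in B$, $w = (d',D') \in B$, and I expect this to be the main obstacle. Here $u(vw) = a(bb') = -(dD'-D'd)a$, while $(uv)w = (-Da)w = D'Da$ and $v(uw) = b(-D'a) = -dD'a$. After cancelling the common term $-dD'a$, the identity reduces to the single requirement
$$D'(da) = D'(Da), \qquad\text{equivalently}\qquad D'\bigl((d-D)a\bigr) = 0.$$
This is exactly where closeness is indispensable: since $(d,D)$ is close, $(d-D)a \in \Leib(L)$, and since $D'$ is an antiderivation, the computation in the preceding lemma gives $D'\Leib(L) = 0$, whence $D'\bigl((d-D)a\bigr) = 0$. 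This last case is precisely the reason the construction must be carried out over $\Cloder(L)$ rather than over all of $\Bider(L)$.
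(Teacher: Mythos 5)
Your proof is correct and follows essentially the same route as the paper: both single out the case $a(bb')$ with $a\in L$ and $b,b'\in B$ as the only one requiring closeness, reduce it to $D'\bigl((d-D)a\bigr)=0$, and dispose of it via $(d-D)a\in\Leib(L)$ together with $D'\Leib(L)=0$ from the preceding lemma. The paper merely declares the remaining cases ``easily verified,'' whereas you spell them out; the substance is identical.
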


\begin{proof} The Leibniz identity for the triples $b(xy)$, $x(by)$, $x(yb)$, $b(b'x)$ and $b(xb')$ where $x,y \in L$ and $b = (d,D), b'=(d',D') \in B$ is easily verified.
Consider $x(bb') = -dD'x + D'dx$.  Since $(d,D)$ is close, we have $(d-D)x \in \Leib(L)$ and $D'(d-D)x = 0$.  Thus $x(bb') = -dD'x + D'Dx$.  But $(xb)b' = (-Dx)b' = D'Dx$ and $b(xb') = b(-D'x) = -dD'x$.  Thus the Leibniz identity holds.
\end{proof}

\begin{definition}  A subalgebra $U$ of the Leibniz algebra $L$ is called {\em intravariant} in $L$ if every close biderivation of $L$ is the sum of an inner biderivation and a biderivation which stabilises $U$.
\end{definition}

If $L$ is a Lie algebra, then a close pair $(d,D)$ must have $d=D$.  Thus the algebra $\Cloder(L)$ is essentially just the derivation algebra of $L$ and the intravariant subalgebras of $L$ considered as Leibniz algebra are precisely the subalgebras which are intravariant under the definition for Lie algebras.

\begin{theorem} \label{fratarg} Let $L$ be a Leibniz algebra and let $U$ be a subalgebra of $L$.  Then $U$ is intravariant in $L$ if and only if, whenever $L$ is an ideal of a Leibniz algebra $X$, we have $X = L + \nser_X(U)$.
\end{theorem}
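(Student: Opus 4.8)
The plan is to prove the two implications separately, in each case translating the intravariance condition into a statement about an explicitly constructed bimodule split extension. The natural candidate for $X$ is the split extension of a suitable $L$-bimodule by $L$, so that close biderivations of $L$ restrict from inner biderivations of $X$, and conversely the inner biderivations of $X$ coming from $X \setminus L$ produce close biderivations of $L$.

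For the forward direction, I would assume $U$ is intravariant and let $L \id X$ be given. For each $x \in X$, the pair $\Biad_x = (\ad_x, \Ad_x)$ restricts to a pair of maps $L \to L$ (since $L$ is an ideal, $xL, Lx \subseteq L$), and this restricted pair $(d,D)$ with $d = \ad_x|_L$, $D = \Ad_x|_L$ is a biderivation of $L$. The key check is that it is \emph{close}: I need $(d-D)L \subseteq \Leib(L)$, which should follow because $(\ad_x - \Ad_x)(y) = xy + yx = (x+y)^2 - x^2 - y^2 \in \Leib(X)$, and one must verify this lands in $\Leib(L)$ rather than merely $\Leib(X)$. By intravariance, $(d,D) = \Biad_a + (d',D')$ for some $a \in L$ and some biderivation $(d',D')$ stabilising $U$. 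Then $x - a \in X$ induces the stabilising biderivation, so $\ad_{x-a}$ stabilises $U$ inside $L$; this forces $x - a \in \nser_X(U)$, giving $x \in L + \nser_X(U)$, hence $X = L + \nser_X(U)$.

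For the converse, I would suppose the normaliser condition holds for every $X \ideq$-containing $L$ as an ideal, and let $(d,D)$ be an arbitrary close biderivation of $L$. The idea is to realise $(d,D)$ as the restriction of an inner biderivation in a one-dimensional extension: form $X = L + \langle x \rangle$ as the split extension of $L$ by the algebra $\langle (d,D) \rangle$ of close biderivations, which is a Leibniz algebra by the preceding lemma. Then $L \id X$, so by hypothesis $X = L + \nser_X(U)$; writing $x = a + n$ with $a \in L$ and $n \in \nser_X(U)$, the biderivation $\Biad_x$ splits as $\Biad_a + \Biad_n$, where $\Biad_a$ is inner in $L$ and $\Biad_n$ stabilises $U$. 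Restricting to $L$ recovers the decomposition of $(d,D)$ as an inner biderivation plus a stabilising one, which is exactly intravariance.

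The main obstacle I anticipate is the bookkeeping around $\Leib$ and the closeness condition: one must check carefully that the restriction of $\Biad_x$ to $L$ is genuinely close \emph{as a biderivation of $L$} (that $(\ad_x - \Ad_x)L \subseteq \Leib(L)$, not just $\Leib(X)$), and conversely that the one-dimensional extension built from an abstract close biderivation is a legitimate Leibniz algebra in which $\Biad_x|_L$ reproduces $(d,D)$. A secondary subtlety is matching the stabilisation of $U$ under the Leibniz (two-sided) action with membership in $\nser_X(U)$, since $\nser_X(U)$ as used elsewhere in the paper must be the appropriate (left) normaliser; I would need to confirm that $n \in \nser_X(U)$ yields a biderivation stabilising $U$ under both $\ad_n$ and $\Ad_n$, which again rests on closeness tying the right action to the left action modulo $\Leib(L)$.
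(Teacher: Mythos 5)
Your proposal follows the paper's proof essentially step for step: the forward direction restricts the inner biderivation $\Biad_x$ of $X$ to $L$ and applies intravariance to the restricted pair, and the converse forms the split extension of $L$ by the subalgebra of $\Cloder(L)$ generated by the given close biderivation $b=(d,D)$ and applies the normaliser hypothesis to write $b=a+n$. The converse half of your sketch is complete and matches the paper (one cosmetic slip: the subalgebra $B$ generated by $(d,D)$ in $\Bider(L)$ need not be one-dimensional, since $[b,b]=(0,dD-Dd)$ can be nonzero, but nothing in the argument uses $\dim B=1$). Your ``secondary subtlety'' about $\nser_X(U)$ is also unproblematic: the normaliser used here is the two-sided one, so $n\in\nser_X(U)$ is exactly the statement that $\Biad_n$ stabilises $U$.

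The point you flag as the main obstacle in the forward direction is, however, a genuine gap, and it is present in the paper as well, which simply asserts that $\Biad_x|_L$ is a close biderivation of $L$ without argument. The computation $xy+yx=(x+y)^2-x^2-y^2$ only shows $(\ad_x-\Ad_x)(L)\subseteq L\cap\Leib(X)$, and this can be strictly larger than $\Leib(L)$. For instance, let $X=\langle x,y,z\rangle$ with $xy=z$ and every other product of basis elements zero; this is a Leibniz algebra, $L=\langle y,z\rangle$ is an abelian ideal, so $\Leib(L)=0$, and yet $(\ad_x-\Ad_x)(y)=xy+yx=z\ne 0$. Thus $\Biad_x|_L$ is a biderivation of $L$ but not a close one, and the definition of intravariance, which quantifies only over close biderivations, cannot be applied to it as it stands. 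So your instinct to isolate this verification is correct, but neither your sketch nor the paper supplies it; completing the forward direction requires either showing that $\Biad_x|_L$ differs from a close biderivation by one stabilising $U$, or reformulating intravariance so that it covers all biderivations arising as $\Biad_x|_L$ for $L\id X$. As written, this step does not go through.
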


\begin{proof}  Suppose that $L \id X$ and that $U$ is intravariant in $L$.  Let $x \in X$.  Then $\Biad_x|_L$ is a close biderivation of $L$, so we have $a \in L$ such that $\Biad_x - \Biad_a$ is a biderivation which stabilises $U$.  Put $n = x-a$.  Then $\Biad_n$ stabilises $U$, so $n \in \nser_X(U)$ and $x \in L+\nser_X(U)$.

Suppose conversely, that for every Leibniz algebra $X$ which contains $L$ as an ideal, we have $X = L + \nser_X(U)$.  Let $b=(d,D)$ be a close biderivation of $L$ and let $B$ be the subalgebra of $\Cloder(L)$ generated by $b$.  Form the split extension $X$ of $L$ by $B$.  Then $X = L + \nser_X(U)$, so $b = a + n$ for some $a \in L$ and $n \in \nser_X(U)$.  We have $\Biad_b = \Biad_a + \Biad_n$.  But $b = \Biad_b|_L$ and the result follows.
\end{proof}

 In Barnes \cite{Frat}, it was shown that for Schunck classes $\f{H}$ of soluble Lie algebras, all $\f{H}$-projectors are intravariant.  That this does not hold for Leibniz algebras follows from Example \ref{notstrongcl} and the following theorem.

\begin{theorem} \label{intra-strong} Let $\fH$ be a Schunck class of soluble Leibniz algebras.  Then $\fH$ is $\fratn$-closed if and only if all $\fH$-projectors are intravariant. 
\end{theorem}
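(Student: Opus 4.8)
The plan is to route everything through the normaliser criterion of Theorem~\ref{fratarg}: a subalgebra $U$ of a Leibniz algebra is intravariant in it precisely when $X = L + \nser_X(U)$ for every $X$ having $L$ as an ideal. Both implications then become statements about whether an $\fH$-projector is supplemented by its normaliser in overalgebras, and the entire departure from the Lie situation is concentrated in the behaviour of the \emph{right} action on antisymmetric chief factors lying inside the Frattini subalgebra.

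For the forward implication, assume $\fH$ is $\fratn$-closed and let $U \in \Proj_\fH(L)$; by Theorem~\ref{projcov}, $U \in \Cov_\fH(L)$, so $U \in \Cov_\fH(V)$ for every subalgebra $V \supseteq U$. Given $L \id X$, I would show $X = L + \nser_X(U)$ by induction on $\dim X$. Pick a minimal ideal $A$ of $X$ with $A \subseteq L$; it is abelian, since $A$ is soluble and $A^2 \id X$ by Corollary~\ref{powers}, so minimality forces $A^2 = 0$. Applying the inductive hypothesis in $X/A$ gives $X = L + N$ with $N = \{x \in X \mid \Biad_x \text{ stabilises } U + A\}$, and it suffices to prove $N \subseteq L + \nser_X(U)$. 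Writing $V = U + A$, so that $A$ is an abelian ideal of $V$, $V/A \cong U/(U \cap A) \in \fH$, and $U \in \Cov_\fH(V)$, I fix $x \in N$ and let $X_1 \le X$ be the subalgebra generated by $V$ and $x$; then $V \id X_1$, $A \id X_1$ and $V/A \in \fH$. Now the dichotomy: if $A \subseteq \Phi(X_1)$ then $\fratn$-closure applied to $V \id X_1$ yields $V \in \fH$, whence $U = V$ and $A \subseteq U$, so $\Biad_x$ already stabilises $U$; otherwise $A$ is complemented in $X_1$ and the conjugacy Lemma~\ref{conj} provides $a \in A$ with $\Biad_{x-a}$ stabilising $U$. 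In both cases $x \in L + \nser_X(U)$. When $A$ is symmetric the left and right corrections coincide (there $\Ad_a = \ad_a$), so the argument collapses to the Lie case; the genuinely new work is exactly the antisymmetric Frattini branch, where $\fratn$-closure is invoked.

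For the converse I argue by contraposition. Suppose $\fH$ is not $\fratn$-closed and fix a failure $A \id L$, $B \id A$, $A/B \in \fH$, $B \subseteq \Phi(L)$, $A \notin \fH$, with $\dim L$ minimal. Let $U \in \Proj_\fH(A)$, so $U \ne A$, and I claim $U$ is not intravariant in $A$. Two ingredients drive this. First, since Schunck formations are $\fratsn$-closed (Theorem~\ref{th-FE2star}) and, on the Lie side, all Schunck classes are $\fratsn$-closed by Barnes and Newell~\cite[Theorem 4.3]{BN}, a symmetric obstruction would already be absorbed; hence the composition factor of $B$ responsible for the failure must be a non-trivial \emph{antisymmetric} (inner) factor. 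Second, an antisymmetric factor sitting inside the Frattini subalgebra is precisely the configuration of Example~\ref{notstrongcl}. Mirroring that example, I would realise $A$ as an ideal of an overalgebra $X$ built as a split extension by a suitable algebra of close biderivations coming from a non-split Loday--Pirashvili sequence $0 \to \asym V \to \bar V \to \Hom(L,V) \to 0$ (Lemma~\ref{LP}). In such an $X$ the normaliser $\nser_X(U)$ cannot supplement $A$, i.e.\ $X \ne A + \nser_X(U)$, so Theorem~\ref{fratarg} gives that $U$ is not intravariant in $A$.

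The main obstacle is the explicit construction of this witnessing $X$ in the converse. The naive choice $X = L$ does not work: one checks that $L = A + \nser_L(U)$ can hold, because the image of $B$ becomes an \emph{uncomplemented} minimal ideal inside $\Phi(L/U)$, so $L$ itself respects the Frattini structure and detects nothing. The failure of intravariance is therefore genuinely external and must be exhibited by enlarging $A$ through an antisymmetric close biderivation whose inner part cannot be split off modulo the stabiliser of $U$; isolating this biderivation, and dually pinning down the exact ideal to which $\fratn$-closure is applied in the antisymmetric Frattini branch of the forward direction (verifying the hypotheses $V \id X_1$ and $A \subseteq \Phi(X_1)$), are the two delicate points where the right action must be controlled with care.
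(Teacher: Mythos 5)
The converse is where your proposal breaks down, and it breaks down at the point where you talk yourself out of the correct argument. You assert that the naive choice $X=L$ ``detects nothing'' because $L = A + \nser_L(U)$ can hold, and you therefore propose to manufacture an external overalgebra from a non-split Loday--Pirashvili sequence --- a construction you never actually carry out. But the paper's converse is precisely the computation you dismiss, run \emph{forwards} rather than by contraposition: given $B \ideq A \ideq L$ with $B \subseteq \Phi(L)$ and $A/B \in \fH$, one first notes $A$ is soluble (since $B_\fN \ideq L$ lies in $\Phi(L)$ and so is nilpotent), takes $U \in \Proj_\fH(A)$, and applies intravariance of $U$ in $A$ with the ambient algebra $X=L$ to get $L = A + \nser_L(U)$. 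Since $U+B = A$ (because $U+B/B$ is $\fH$-maximal in $A/B \in \fH$), this reads $L = B + \nser_L(U)$, and $B \subseteq \Phi(L)$ forces $\nser_L(U) = L$; Lemma \ref{selfnorm} then gives $U = A$, hence $A \in \fH$. So the identity $L = A + \nser_L(U)$ is not a sign that $L$ fails to witness anything --- it is the engine of the proof. Your auxiliary claim that the obstruction ``must be antisymmetric'' also leans on Theorem \ref{th-FE2star}, which requires $\fH$ to be a formation and is not available for a general Schunck class. As written, the converse direction is a genuine gap.

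The forward direction is in the same spirit as the paper's induction but has a hole at the complemented branch. You invoke Lemma \ref{conj} to produce $a \in A$ with $\Biad_{x-a}$ stabilising $U$, but that lemma conjugates two $\fH$-covering subalgebras of $V = U+A$, and you never exhibit a second one attached to $x$: $1+\Biad_x$ is not an automorphism of $V$ for general $x$, so $x$ does not act on $\Cov_\fH(V)$. Moreover ``$A \not\subseteq \Phi(X_1)$ implies $A$ is complemented in $X_1$'' needs justification, since $A$ is a minimal ideal of $X$ but need not be one of $X_1$. The paper sidesteps both difficulties: it takes a maximal subalgebra $M$ of the ambient algebra complementing the minimal ideal $K$, shows that $V = M\cap N$ either forces $N \in \fH$ or is a second $\fH$-projector of $N$ conjugate to $U$ by $\alpha_k$ for some $k$ in the abelian ideal $K$ (so $\alpha_k$ genuinely is an automorphism), and then replaces $M$ by $\alpha_k(M)$ to conclude $M = \nser_L(U)$. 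You would need to supply an argument of this shape to close the forward direction.
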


\begin{proof}  Suppose $\fH$ is $\fratn$-closed.  I use induction over $\dim(L)$ to prove that if $N$ is a soluble ideal of $L$ and $U$ is an $\fH$-projector of $N$, then $N+ \nser_L(U) = L$.  Let $K \le N$ be a minimal ideal of $L$ and let $M = \nser_L(K+U)$.  By induction, $M+N = L$.  Suppose $M < L$.  Then $U$ is an $\fH$-projector of $M \cap N$ and by induction, $\nser_M(U) + (N\cap M) = M$, whence it follows that $N+\nser_L(U) = L$.  Thus we may suppose that $U+K \id L$ which implies $U+K = N$ by Lemma \ref{selfnorm}.  If $K \le \Phi(L)$, then $N \in \fH$ since $\fH$ is $\fratn$-closed.  In this case, we have $\nser_L(U) = L$.  If $K \not\le \Phi(L)$, the there exists a maximal subalgebra $M$ of $L$ which complements $K$ in $L$.  Then $V = M \cap N$ complements $K$ in $N$.  Since $N \id L$, by Barnes \cite[Theorem 2.2]{LeibSub}, as $N$-module and hence as $U$-module, all composition factors of $K$ are isomorphic.  Further, all are complemented.  It follows that either the split extension of a composition factor by $V$ is in $\f{H}$ and $U+K \in \fH$, or $V$ is an $\f{H}$-projector of $N$, in which case, by Lemma \ref{conj}, there exists $k \in K$ such that $\alpha_k(V) = U$.  By replacing $M$ by $\alpha_k(M)$, we may assume that $M \cap N = U$.  But then $U \ideq M$ and $M = \nser_L(U)$.

Suppose conversely, that $\fH$ has the property that $\fH$-projectors are intravariant.  Let $L$ be a Leibniz algebra, $B \ideq A \ideq L$, $B \subseteq \Phi(L)$ and that $A/B \in \fH$.  By Barnes \cite[Theorem 2.5]{LeibSub},  $B_{\fN} \ideq L$. Since $B_{\fN} \subseteq \Phi(L)$, $B_{\fN}$ is nilpotent and so $A$ is soluble.  Let $U$ be an $\fH$-projector of $A$.  Then $L = A + \nser_L(U)$.  But $B + U = A$ and $B + \nser_L(U) = B+U + \nser_L(U) = L$.  Since $B \le \Phi(L)$, this implies that $\nser_L(U) = L$.  By Lemma \ref{selfnorm}, $U = A$ and $A \in \f{H}$.
\end{proof}

The above proof of the equivalence of $\fratn$-closure and the intravariance property remains valid if we restrict the larger algebra for each property  to being soluble, that is, if we work within the category of soluble Leibniz algebras.  However, if $\fH$ has the intravariance property within the category of soluble Leibniz algebras, then it has the property in the larger category.  For suppose $x \in L$.  Then the algebra $L_x = \alg x, A\rangle$ is soluble and so $L_x = A + \nser_{L_x}(U)$, that is, $x \in A+\nser_L(U)$ and it follows that $L= A + \nser_L(U)$.  It follows that if $\f{H}$ has the $\fratn$-closure property within the category of soluble Leibniz algebras, it also has that property in ther larger category.

\begin{theorem}  Let $\fF$ be a Schunck formation of soluble Leibniz algebras.  Let $L$ be a soluble Leibniz algebra and let $U$ be an $\fF$-normaliser of $L$.  Then $U$ is intravariant in $L$.
\end{theorem}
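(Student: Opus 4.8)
The plan is to reduce the assertion to the Frattini-type criterion of Theorem~\ref{fratarg}: $U$ is intravariant in $L$ if and only if $X = L + \nser_X(U)$ whenever $L \id X$. As in the remark following Theorem~\ref{intra-strong}, it suffices to verify this for soluble $X$, since for a general $X$ and $x \in X$ the subalgebra $\langle x, L\rangle$ is soluble (its quotient by $L$ is generated by a single element, hence soluble), and $\langle x, L\rangle = L + \nser_{\langle x, L\rangle}(U)$ forces $x \in L + \nser_X(U)$. I would then induct on $\dim L$. Choosing a minimal ideal $A$ of $X$ with $A \subseteq L$ (so $A \id L$ and, $X$ being soluble, $A$ is abelian), the image $(U + A)/A$ is an $\fF$-normaliser of $L/A$, as the defining chain of $\fF$-critical maximals passes to the quotient. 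By the inductive hypothesis it is intravariant in $L/A$, so applying Theorem~\ref{fratarg} to $L/A \id X/A$ and pulling the normaliser back along the quotient map gives $X = L + \nser_X(U + A)$.

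It remains to pass from $\nser_X(U + A)$ to $\nser_X(U)$, and here I would invoke the cover--avoidance theorem for $\fF$-normalisers. Since $A$ is $X$-irreducible and $L \id X$, Clifford theory makes its $L$-chief factors mutually conjugate, hence uniformly $\fF$-central or uniformly $\fF$-eccentric, conjugation preserving $\fF$-centrality. If they are $\fF$-central, $U$ covers them all, so $A \subseteq U$, $U + A = U$, and we are finished at once. If they are $\fF$-eccentric, $U$ avoids them all, so $U \cap A = 0$; put $W = U + A = U \oplus A$. Running a $W$-chief series $0 = A_0 \subset \dots \subset A_m = A$ through $A$, each $U + A_{i-1}$ is $\fF$-critical maximal in $U + A_i$ (it complements the $\fF$-eccentric chief factor $A_i/A_{i-1}$ and satisfies $(U+A_{i-1}) + N(U+A_i) = U + A_i$), so $U$ is itself an $\fF$-normaliser of $W$. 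When $W \subsetneq L$ this closes the induction: as $\dim W < \dim L$, $U$ is intravariant in $W$, and for $y \in \nser_X(U + A)$ the subalgebra $\langle W, y\rangle$ gives $\langle W, y\rangle = W + \nser_{\langle W, y\rangle}(U)$, whence $y \in W + \nser_X(U) \subseteq L + \nser_X(U)$ and so $X = L + \nser_X(U)$.

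The genuine base case, and the step I expect to be the main obstacle, is the atomic situation $W = L$, that is $L = U \oplus A$ with $A$ an $\fF$-eccentric minimal ideal of $X$ and $U \cong L/A \in \fF$; here the quotient step is vacuous and one must show $X = L + \nser_X(U)$ directly. By Lemma~\ref{nilsup}, $U$ is an $\fF$-covering subalgebra of $L$, and since every complement of $A$ in $L$ is likewise $\fF$-maximal with $L = (\text{complement}) + A$, all complements are $\fF$-covering subalgebras; by Lemma~\ref{conj} they form a single orbit $\{\alpha_a(U) : a \in A\}$ under the automorphisms $\alpha_a = 1 + \lambda_a$. For $x \in X$, the inner biderivation $\Biad_x$ stabilises both $L$ and $A$, and the plan is to produce $a_0 \in A$ with $\Biad_x$ stabilising the complement $\alpha_{a_0}(U)$. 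Granting this, conjugation by $\alpha_{a_0}$ (whose inverse is $1 - \lambda_{a_0}$, since $A$ is abelian so $\lambda_{a_0}^2 = 0$) sends $\Biad_x$ to $\Biad_{\alpha_{a_0}^{-1}(x)}$ with $\alpha_{a_0}^{-1}(x) = x - a_0 x \in x + A$; hence $x - a_0 x \in \nser_X(U)$ and $x \in L + \nser_X(U)$.

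The crux is therefore the existence of the stabilised complement $\alpha_{a_0}(U)$. Writing a prospective complement as $\{u + \sigma(u)\}$ with $\sigma$ a Leibniz cocycle of $U$ into $A$, the requirement that $\Biad_x$ stabilise it becomes an inhomogeneous linear system in $\sigma$ recording the $A$-components of $xu$ and of $ux$, whose solvability amounts to the vanishing of a close-biderivation cohomology class of $U$ with values in $A$. This is exactly where the hypothesis that $A$ is $\fF$-eccentric while $U \in \fF$ is decisive: $A$ shares no composition factors with the constituents of $L/A$, so the relevant $H^1$ vanishes, in the spirit of the hypercentral/hypereccentric $\Hom$ and tensor computations behind Theorem~\ref{th-Fcomp} and of Barnes~\cite{excentric}. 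Establishing this vanishing cleanly for close biderivations, and checking that the single element $a_0$ simultaneously corrects the left and the right defect (which is forced by the closeness relation $xu + ux \in \Leib(L)$), is the technical heart of the proof.
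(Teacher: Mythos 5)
Your skeleton matches the paper's: induct on $\dim L$, take a minimal ideal $A$ of $X$ inside $L$, note that $(U+A)/A$ is an $\fF$-normaliser of $L/A$ so that $X = L + \nser_X(U+A)$, and then split according to whether the $L$-composition factors of $A$ (all isomorphic by Clifford-type theory) are $\fF$-central (whence $A \subseteq U$ and you are done) or $\fF$-eccentric. The divergence, and the genuine gap, is in the eccentric case. The paper closes it in one line: $U$ is an $\fF$-projector of $U+A$, and $\fF$-projectors of a Schunck \emph{formation} are already known to be intravariant --- Theorem \ref{th-FE2star} shows $\fF$ is $\fratsn$-closed, hence $\fratn$-closed, and Theorem \ref{intra-strong} then gives intravariance; since $U+A \id N := \nser_X(U+A)$, Theorem \ref{fratarg} yields $N = (U+A) + \nser_N(U)$ and therefore $X = N + L \subseteq L + \nser_X(U)$. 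You instead route through the claim that $U$ is an $\fF$-normaliser of $W = U+A$, reduce to the atomic case $L = U \oplus A$, and there attempt to build a complement of $A$ stabilised by $\Biad_x$ via a cohomological vanishing for close biderivations. You explicitly leave that vanishing unproved (``the technical heart of the proof''), so the argument is incomplete precisely at the only point where something remains to be shown. You are one citation away from repairing it: you already observe that $U$ is an $\fF$-covering subalgebra of $U+A$ (via Lemma \ref{nilsup}), and invoking Theorem \ref{intra-strong} at that moment makes the entire atomic-case construction, the conjugacy argument with $\alpha_{a_0}$, and the $W \subsetneq L$ versus $W = L$ case split unnecessary.

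A secondary caution: even if you insisted on the direct cohomological route, the intravariance of projectors for Leibniz algebras is \emph{not} automatic for arbitrary Schunck classes --- the paper's Example \ref{notstrongcl} together with Theorem \ref{intra-strong} shows it can fail when $\fH$ is not $\fratn$-closed --- so your proposed $H^1$ vanishing must genuinely use the formation hypothesis (via Theorem \ref{th-FE2star}), not just $\fF$-eccentricity of $A$. As written, your sketch does not make clear where that hypothesis would enter, which is exactly the danger sign that the missing step is not a routine verification.
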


\begin{proof}  Suppose that $L$ is an ideal of the Leibniz algebra $X$.  I use induction over $\dim(L)$ to prove $L+\nser_X(U) = X$.  Let $A \subseteq L$ be a minimal ideal of $X$.  Then $U+A/A$ is an $\fF$-normaliser of $L/A$.  By induction, $L + \nser_X(U+A) = X$.  Put $N = \nser_X(U+A)$.  Since $A$ is an irreducible $X$-module, by Barnes \cite[Theorem 2.2]{LeibSub}, all $L$-composition factors of $A$ are isomorphic, so either all are $\fF$-central or all are $\fF$-eccentric.  If all are $\fF$-central, then $U \supseteq A$, $\nser_X(U)=N$ and the result holds.  If all are $\fF$-eccentric, then $U \cap A = 0$ and $U$ is an $\fF$-projector of $U+A$ and so is intravariant in $U+A$.  As $U+A \id N$, this implies that $(U+A) +\nser_N(U) = N$ and $\nser_X(U) + L \supseteq \nser_N(U) + (U+A) + L = N+L = X$.
\end{proof}

\bibliographystyle{amsplain}

\end{document}